\def\makeheadbox{{%
\hbox to0pt{\vbox{\baselineskip=10dd\hrule\hbox
to\hsize{\vrule\kern3pt\vbox{\kern3pt
\hbox{
Kozdon, J.E., Erickson, B.A. \& Wilcox, L.C. Hybridized Summation-by-Parts
}
\hbox{
Finite Difference Methods. J Sci Comput 87, 85 (2021).
}
\hbox{
\url{https://doi.org/10.1007/s10915-021-01448-5}
}
\kern3pt}\hfil\kern3pt\vrule}\hrule}%
\hss}}}
  \renewcommand\normalsize{%
   \@setfontsize\normalsize\@xpt{12.5pt}%
   \abovedisplayskip=3 mm plus6pt minus 4pt
   \belowdisplayskip=3 mm plus6pt minus 4pt
   \abovedisplayshortskip=0.0 mm plus6pt
   \belowdisplayshortskip=2 mm plus4pt minus 4pt
   \let\@listi\@listI}%
  \renewcommand\small{%
   \@setfontsize\small{8.5pt}\@xpt
   \abovedisplayskip 8.5\p@ \@plus3\p@ \@minus4\p@
   \abovedisplayshortskip \z@ \@plus2\p@
   \belowdisplayshortskip 4\p@ \@plus2\p@ \@minus2\p@
   \def\@listi{\leftmargin\leftmargini
               \parsep 0\p@ \@plus1\p@ \@minus\p@
               \topsep 4\p@ \@plus2\p@ \@minus4\p@
               \itemsep0\p@}%
   \belowdisplayskip \abovedisplayskip}
   \renewcommand\normalsize{%
   \@setfontsize\normalsize\@xpt\@xiipt
   \abovedisplayskip=3 mm plus6pt minus 4pt
   \belowdisplayskip=3 mm plus6pt minus 4pt
   \abovedisplayshortskip=0.0 mm plus6pt
   \belowdisplayshortskip=2 mm plus4pt minus 4pt
   \let\@listi\@listI}%
  \renewcommand\small{%
   \@setfontsize\small\@viiipt{9.5pt}%
   \abovedisplayskip 8.5\p@ \@plus3\p@ \@minus4\p@
   \abovedisplayshortskip \z@ \@plus2\p@
   \belowdisplayshortskip 4\p@ \@plus2\p@ \@minus2\p@
   \def\@listi{\leftmargin\leftmargini
               \parsep 0\p@ \@plus1\p@ \@minus\p@
               \topsep 4\p@ \@plus2\p@ \@minus4\p@
               \itemsep0\p@}%
   \belowdisplayskip \abovedisplayskip}
  \renewcommand\normalsize{%
   \@setfontsize\normalsize{9.5pt}{11.5pt}%
   \abovedisplayskip=3 mm plus6pt minus 4pt
   \belowdisplayskip=3 mm plus6pt minus 4pt
   \abovedisplayshortskip=0.0 mm plus6pt
   \belowdisplayshortskip=2 mm plus4pt minus 4pt
   \let\@listi\@listI}%
  \renewcommand\small{%
   \@setfontsize\small\@viiipt{9.25pt}%
   \abovedisplayskip 8.5\p@ \@plus3\p@ \@minus4\p@
   \abovedisplayshortskip \z@ \@plus2\p@
   \belowdisplayshortskip 4\p@ \@plus2\p@ \@minus2\p@
   \def\@listi{\leftmargin\leftmargini
               \parsep 0\p@ \@plus1\p@ \@minus\p@
               \topsep 4\p@ \@plus2\p@ \@minus4\p@
               \itemsep0\p@}%
   \belowdisplayskip \abovedisplayskip}
\pgfplotsset{compat=1.13}
\newtheorem{assumption}{Assumption}
\newcommand*\patchAmsMathEnvironmentForLineno[1]{%
  \expandafter\let\csname old#1\expandafter\endcsname\csname #1\endcsname
  \expandafter\let\csname oldend#1\expandafter\endcsname\csname end#1\endcsname
  \renewenvironment{#1}%
     {\linenomath\csname old#1\endcsname}%
     {\csname oldend#1\endcsname\endlinenomath}}%
\newcommand*\patchBothAmsMathEnvironmentsForLineno[1]{%
  \patchAmsMathEnvironmentForLineno{#1}%
  \patchAmsMathEnvironmentForLineno{#1*}}%
\newcommand{\footnoteref}[1]{%
\ltx@ifpackageloaded{hyperref}{%
  \ifHy@hyperfootnotes%
    \hbox{\hyperref[#1]{%
            %
            %
            \@textsuperscript {\normalfont \ref*{#1}}}}%
  \else%
    \hbox{\@textsuperscript {\normalfont \ref*{#1}}}%
  \fi%
}{%
    \hbox{\@textsuperscript {\normalfont \ref{#1}}}%
 }%
}
\SetMathAlphabet{\mathrm}{normal}{\encodingdefault}{cmss}{\mddefault}{n}
\SetMathAlphabet{\mathrm}{bold}{\encodingdefault}{cmss}{\bfdefault}{n}
\newcommand{\vv}[1]{{\boldsymbol{{ #1}} }}
\newcommand{\VV}[1]{{\boldsymbol{\tilde{ #1}} }}
\newcommand{\mm}[1]{{\boldsymbol{{ #1}} }}
\newcommand{\MM}[1]{{\boldsymbol{\tilde { #1}} }}
\newcommand{\eref}[1]{(\ref{#1})}
\newcommand{\sref}[1]{Section~\ref{#1}}
\newcommand{\aref}[1]{Section~\ref{#1}}
\newcommand{\pd}[2]{\frac{\partial #1}{\partial #2}}
\newcommand{\BB}{\mathcal{B}}
\newcommand{\FF}{\mathcal{F}}
\newcommand{\diag}{\text{diag}}
\newcommand{\avg}[1]{\ensuremath{\left\{\!\left\{ #1 \right\}\!\right\} } }
\newcommand{\jmp}[1]{\ensuremath{\left[\!\left[ #1 \right]\!\right]}}
\newcommand{\nullspace}{\ensuremath{\mbox{\rm null}}}
\newcommand{\spanspace}{\ensuremath{\mbox{\rm span}}}
\title{Hybridized Summation-By-Parts Finite Difference Methods
\thanks{%
  J.E.K. was supported by National Science Foundation Award EAR-1547596.\\
  B.A.E. was supported by National Science Foundation Awards EAR-1547603 and
  EAR-1916992.\\
  The SCEC contribution number for this article is 10992.
  }
}
\begin{document}

\makeatletter
\let\ORIGINAL@spythm\@spythm{}
\def\@spythm#1#2#3#4[#5]{%
  \NR@gettitle{#5}%
  \ORIGINAL@spythm{#1}{#2}{#3}{#4}[#5]%
}
\makeatother

\author{Jeremy E. Kozdon \and
Brittany A. Erickson \and
Lucas C. Wilcox}

\institute{J. E. Kozdon and L. C. Wilcox \at%
          Department of Applied Mathematics,\\
          Naval Postgraduate School,\\
          833 Dyer Road,\\
          Monterey, CA 93943--5216\\
          \email{\{jekozdon,lwilcox\}@nps.edu}
          \and
          B. A. Erickson \at%
          Computer and Information Science\\
          1202 University of Oregon\\
          1477 E. 13th Ave.\\
          Eugene, OR 97403--1202\\
          \email{bae@cs.uoregon.edu}
          \and
          \begin{center}
            The views expressed in this document are those of the authors and do
            not reflect the official policy or position of the Department of
            Defense or the U.S. Government.\\
            Approved for public release; distribution unlimited
          \end{center}
          }

\date{published: June 2021}

\maketitle

\begin{abstract}
  We present a hybridization technique for summation-by-parts finite difference
  methods with weak enforcement of interface and boundary conditions for second
  order, linear elliptic partial differential equations. The method is based on
  techniques from the hybridized discontinuous Galerkin literature where local
  and global problems are defined for the volume and trace grid points,
  respectively. By using a Schur complement technique the volume points can be
  eliminated, which drastically reduces the system size. We derive both the
  local and global problems, and show that the resulting linear systems are
  symmetric positive definite. The theoretical stability results are confirmed
  with numerical experiments as is the accuracy of the method.
\end{abstract}

\section{Introduction}
High-order finite difference methods have a long and rich history for solving
second order, elliptic partial differential equations (PDEs); see for instance
the short historical review of \citet{Thomee2001}.
When complex geometries are involved, finite difference methods are similar to
finite element methods in that unstructured meshes and coordinate transforms
can be used to handle complex geometries \citep{NordstromCarpenter2001}.
Summation-by-parts (SBP) finite difference methods \citep{KS74,KS77, Strand94,
MN04, Mat12} have been particularly effective for such problems, since
inter-block coupling conditions be can be handled weakly using the simultaneous
approximation term (SAT) method \citep{CarpenterGottliebAbarbanel1994,
CarpenterNordstromGottlieb1999}.

The combined SBP-SAT approach has been used extensively for problems that arise
in the natural sciences where physical interfaces are ubiquitous, for example in
earthquake problems where faults separate continental and oceanic crustal blocks
or in multiphase fluids with discontinuous properties
\citep{Kozdon2012InteractionOW, EricksonDay2016, Karlstrom2016, Lotto2015}.
The present work is particularly motivated by models of earthquake nucleation
and rupture propagation over many thousands of years, where the slow, quiescent
periods between earthquakes represent quasi-steady state problems
\citep{Erickson2014}.
In the steady-state regime, an elliptic PDE must be
repeatedly solved, which results in large linear systems of equations for
complex problems.

In this work we propose a hybridization technique for SBP-SAT methods in order
to reduce the size of the linear systems.
The inspiration for this is static condensation and hybridization for finite
element methods \citep{Guyan1965, CockburnGopalakrishnanLazarov2009}.
These techniques reduce system size by writing the numerical method in a way
that allows the Schur complement to be used to eliminate degrees of freedom from
within the element leaving only degrees of freedom on element boundaries.
SBP-SAT methods have a similar discrete structure to discontinuous Galerkin
methods, with the penalty terms in SBP-SAT methods being analogous to the
numerical fluxes in discontinuous Galerkin methods.

Here we introduce independent trace variables along the faces of the blocks, and
the inter-block coupling penalty terms are only a function of the trace
variables.
Thus, the solution in each block is uniquely determined by the trace variables
which are applied as Dirichlet boundary data.
The problem is broken into two pieces, a \emph{local problem},
which is the solution within the block given the trace data, and a
\emph{global problem}, which is the value of the trace variable given the block
data.
Using a Schur complement technique either set of variables can be eliminated.
When the trace variables are eliminated the scheme is similar to existing
SBP-SAT schemes, for instance the method of \citet{VM14}.
If on the other hand the volume variables are eliminated and the trace variables
are retained, the system size is drastically reduced since the system only involves
the unknowns along the block faces.
That said, the cost of forming this later Schur complement system arises from
the need to invert each finite difference block (though we note that each
inverse is independent, involving only the block local degrees of freedom).

The developed method is symmetric positive definite for the monolithic system
(trace and volume variables) as are the two Schur complement systems.
Thus, the elliptic discretization is stable.
Importantly, these properties are shown to hold even if the elliptic problem is
variable coefficient or involves curvilinear blocks.
Since the  discretization is based on the hybridized interior penalty method
\citep[IP-H]{CockburnGopalakrishnanLazarov2009}, there is a (spatially varying)
penalty parameter that must be sufficiently large for stability and a bound for
this penalty is given.
It is also shown that the penalty parameter can be determined purely from the
local problem, independent of the neighboring blocks.

The paper is organized as follows: In \sref{sec:sbp} we detail the block
decomposition and SBP operators.
\sref{sec:model} describes the model problem, an elliptic PDE, along with
boundary and interface conditions which allow for jump discontinuities and
material contrasts.
\sref{sec:hsbp} details the hybridized scheme, including the local and global
problems.
Proofs of positive-definiteness of both systems are provided; these results are
confirmed with numerical experiments in \sref{sec:numerical}.
\sref{sec:numerical} also provides results from convergence tests using an exact
solution, and we conclude with a summary in \sref{sec:conclusions}.

\section{Domain decomposition and SBP operators}\label{sec:sbp}
As noted above, we apply the class of high-order accurate SBP finite difference
methods which were introduced for first derivatives in \citet{KS74, KS77,
Strand94}, and for second derivatives by \citet{MN04}, with the variable
coefficients treated in \citet{Mat12}. In addition to high-order accuracy, SBP
methods can be combined with various boundary treatments so that the resulting
linear PDE discretization is provably stable. In \sref{sec:hsbp} we use weak
enforcement of boundary and interface conditions with the
Simultaneous-Approximation-Term (SAT) method. Here we introduce notation related
to the decomposition of the computational domain into blocks as well as
one-dimensional and two-dimensional SBP operators for first and second
derivatives.

\subsection{Domain Decomposition}
We let the computational domain be $\Omega \subset \mathbb{R}^{2}$ which is
partitioned into $N_b$ non-overlapping curved quadrilateral blocks; the
partitioning is denoted $\BB(\Omega)$. For each block $B \in \BB(\Omega)$ we
assume that there exists a diffeomorphic mapping from the reference block
$\hat{B} = [0,1] \times [0,1]$ to $B$. The mapping $\left(x^{B}(r,s),
y^{B}(r,s)\right)$ goes from the reference block to the physical block and
$\left(r^{B}(x,y), s^{B}(x,y)\right)$ is the inverse mapping.  An example of
this is shown in Figure~\ref{fig:coord:trans}; the figure also shows the face
numbering for the reference block.
\begin{figure}
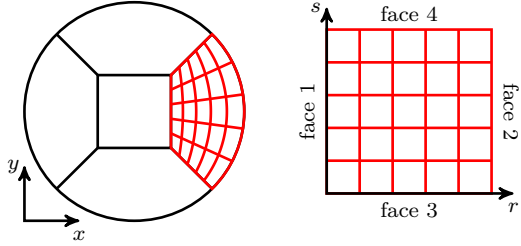

  \begin{center}

     \caption{(left) Block decomposition of a disk with a single curved block
    highlighted along with its grid lines in physical space.
    (right) Mapping of the highlighted block to the reference domain; shown in
    the figure is the convention used to number the faces of the reference
    block.\label{fig:coord:trans}}
  \end{center}
\end{figure}

As will be seen in \sref{sec:model}, the transformation to the reference block
requires metric relations that relate the physical and reference derivatives.
Four relations that are particularly useful are
\begin{align*}
  J\pd{r}{x} &=  \pd{y}{s}, &
  J\pd{s}{y} &=  \pd{x}{r}, &
  J\pd{s}{x} &= -\pd{y}{r}, &
  J\pd{r}{y} &= -\pd{x}{s},
\end{align*}
with $J$ being the Jacobian determinant for block $B$,
\begin{equation*}
  J =
  \pd{x}{r}\pd{y}{s} - \pd{x}{s}\pd{y}{r};
\end{equation*}
for simplicity of notation, unless required we suppress the block $B$
superscript and the relations should be understood as applying to a single
block.  For face $k$ of a block, the surface Jacobian is
\begin{equation*}
  \mathcal{S}_{J,k} =
  \begin{cases}
    \sqrt{
      {\left(\pd{x}{s}\right)}^{2}
      +
      {\left(\pd{y}{s}\right)}^{2}
    },& \mbox{ if } k = 1,2,\\
    \sqrt{
      {\left(\pd{x}{r}\right)}^{2}
      +
      {\left(\pd{y}{r}\right)}^{2}
    },& \mbox{ if } k = 3,4,
  \end{cases}
\end{equation*}
and the outward unit normal vectors are
\begin{equation*}
  \begin{alignedat}{2}
    \mathcal{S}_{J,1}
    \vv{\hat{n}}_{1}
    &=
    \begin{bmatrix}
      - \pd{y}{s}\\
      \phantom{-}\pd{x}{s}
    \end{bmatrix},~&
    \mathcal{S}_{J,2}
    \vv{\hat{n}}_{2}
    &=
    \begin{bmatrix}
      \phantom{-}\pd{y}{s}\\
      - \pd{x}{s}
    \end{bmatrix},\\
    \mathcal{S}_{J,3}
    \vv{\hat{n}}_{3}
    &=
    \begin{bmatrix}
      \phantom{-}\pd{y}{r}\\
      - \pd{x}{r}
    \end{bmatrix},&
    \mathcal{S}_{J,4}
    \vv{\hat{n}}_{4}
    &=
    \begin{bmatrix}
      - \pd{y}{r}\\
      \phantom{-}\pd{x}{r}
    \end{bmatrix}.
  \end{alignedat}
\end{equation*}
\subsection{One-Dimensional SBP operators}
Let the domain $0 \leq r \leq 1$ be discretized with $N+1$ evenly spaced grid
points $r_i = i \, h$, $i = 0, \dots, N$ and spacing $h = 1/N$. The projection
of a function $u$ onto the computational grid is taken
to be $\vv{u} = {[u_{0},\,u_{1},\,\dots,\,u_{N}]}^{T}$; if $u$ is known then
$\vv{u}$ is often taken to be the interpolant at the grid points. The grid basis
vector $\vv{e}_{j}$ is $1$ at grid point $j$ and zero at all other grid points
and $u_{j} = \vv{e}_{j}^{T}\vv{u}$.

\begin{definition}[First Derivative]
  A matrix $\mm{D}_{r}$ is a called an SBP approximation to $\partial u/\partial
  r$ if it can be decomposed as $\mm{H}\mm{D}_{r} = \mm{Q}$ with $\mm{H}$ being
  symmetric positive definite and $\mm{Q}$ being such that $\vv{u}^{T}(\mm{Q} +
  \mm{Q}^{T})\vv{v} = u_{N}v_{N} - u_{0}v_{0}$.
\end{definition}
In this work we only consider diagonal-norm SBP, i.e., finite difference
operators where $\mm{H}$ is a diagonal matrix and $\mm{D}_{r}$ is the standard
central finite difference matrix in the interior which transitions to one-sided
at the boundaries. The condition on $\mm{Q}$ can also be written as $\mm{Q} +
\mm{Q}^{T} = \vv{e}_{N}\vv{e}_{N}^{T} - \vv{e}_{0}\vv{e}_{0}^{T}$.

The operator $\mm{D}_{r}$ is called SBP because the integration-by-parts
property
\begin{equation*}
  \int_0^1 u \pd{v}{r} + \int_0^1 \pd{u}{r} v = uv\bigg|_0^1,
\end{equation*}
is mimicked discretely by
\begin{equation*}
  \vv{u}^T \mm{H} \mm{D}_{r}\vv{v} + \vv{u}^T \mm{D}_{r}^{T} \mm{H}\vv{v}
  =
  \vv{u}^T\left( \mm{Q} + \mm{Q}^T\right)\vv{v}
  =
  u_{N}v_{N} - u_{0}v_{0}.
\end{equation*}

\begin{definition}[Second Derivative]
  A matrix $\mm{D}_{rr}^{(c)}$ is a called an SBP approximation to
  $\frac{\partial}{\partial r}\left(c\frac{\partial u}{\partial r}\right)$ if it
  can be decomposed as $\mm{H}\mm{D}_{rr}^{(c)} = -\mm{A}^{(c)} +
  c_{N}\vv{e}_{N}\vv{d}_{N}^{T} - c_{0}\vv{e}_{0}\vv{d}_{0}^{T}$ where
  $\mm{A}^{(c)}$ is symmetric positive definite and $\vv{d}_{0}^{T}\vv{u}$ and
  $\vv{d}_{N}^{T}\vv{u}$ are approximations of the first derivative of $u$ at
  the boundaries.
\end{definition}

The operator $\mm{D}^{(c)}_{rr}$ is called SBP because the
integration-by-parts equality
\begin{equation*}
  \int_0^1 u \pd{}{r}\left(c\pd{v}{r}\right)
 + \int_0^1 \pd{u}{r} c\pd{v}{r}
  =
    u c \pd{v}{r} \bigg|_0^1,
\end{equation*}
is mimicked discretely by
\begin{equation*}
  \vv{u}^T \mm{H} \mm{D}^{(c)}_{rr}\vv{v}
  +\vv{u}^T\mm{A}^{(c)}\vv{v}
  =
    c_{N}u_{N} \vv{d}_{N}^{T}\vv{v}
  - c_{0}u_{0} \vv{d}_{0}^{T}\vv{v}.
\end{equation*}

\begin{definition}[Compatability]
  Matrices $\mm{D}_{r}$ and $\mm{D}_{rr}^{(c)}$ are called compatible SBP
  operators if they use the same matrix $\mm{H}$ and the remainder matrix
  $\mm{R}^{(c)} = \mm{A}^{(c)} - \mm{D}_{r}^{T} \mm{C} \mm{H} \mm{D}_{r}$ is
  symmetric positive definite with $\mm{C} = \diag(\vv{c})$ being a diagonal
  matrix constructed from the grid interpolant of $c$.
\end{definition}

It is important to note that compatibility does not assume that $\vv{d}_{0}^{T}$
and $\vv{d}_{N}^{T}$ are the first and last rows of $\mm{D}_{r}$. When this is
the case the operators are called fully-compatible
\citep{MattssonParisi2010CICP} and such operators are not used in this work.

As noted above, we only consider diagonal-norm SBP finite difference operators.
In the interior the operators use the minimum bandwidth central difference
stencil and transition to one-sided near the boundary in a manner that maintains
the SBP property.  If the interior operator has accuracy $2p$, then the interior
stencil bandwidth is $2p+1$ and the boundary operator has accuracy $p$. The
first and second derivative operators used are those given in
\citet{Strand94}\footnote{\label{footnote:x1}The free parameter in the $2p=6$
operator from \citet{Strand94} is taken to be $x_1=0.70127127127127$.  This
choice of free parameter is necessary for the values of the Borrowing Lemma
given in \citet{VM14} to hold; the Borrowing Lemma is discussed in
\aref{sec:app:loc:PD}.} and \citep{Mat12}, respectively.  In
\sref{sec:numerical} we will use operators with interior accuracy $2p = 2$, $4$,
and $6$. The expected global order of accuracy is the minimum of $2p$ and $p+2$
as evidenced experimentally \citep{Mattsson2009, VM14} and proved rigorously for
the Schr\"{o}dinger equation \citep{Nissen2013}.  In \sref{sec:numerical} we
verify this result for the hybridized scheme through convergence tests.

\begin{remark}
  If the second derivative finite difference operator is defined by repeated
  applications of the first derivatives operator, e.g, $\mm{D}_{rr}^{(c)} =
  \mm{D}_{r} \mm{C} \mm{D}_{r}$, then the operator is fully compatible with
  $\mm{R}^{(c)}$ being the zero matrix but the operator does not have minimal
  bandwidth.
\end{remark}
\subsection{Two-Dimensional SBP operators}\label{sec:2d}
Two-dimensional SBP operators can be developed for rectangular domains by
applying the one-dimensional operators in a tensor product fashion (i.e.,
dimension-by-dimension application of the one-dimensional operators). Here we
describe the operators for the reference block $\hat{B} = [0, 1] \times
[0, 1]$. We assume that the domain is discretized using an $(N+1) \times (N+1)$
grid of points where grid point $(i,j)$ is at $(r_i, s_j) = (ih, jh)$ for $0
\leq i, j \leq N$ with $h = 1/N$; the generalization to different numbers of
grid points in each dimension complicates the notation but does not impact the
construction of the method and is discussed later.

A 2D grid function $\VV{u}$ is taken to be a stacked vector of vectors with
$\VV{u} = {[\vv{u}_0^T,\, \vv{u}_1^T, \, \dots, \, \vv{u}_{N}^T]}^T$ and
$\vv{u}_i^T = {[u^{0i}, \, u^{1i},\, \dots\, , u^{Ni}]}^{T}$ where $u^{ji}
\approx u(r_j,s_i)$.

Derivative approximations are taken to be of the form
\begin{equation}
  \label{eqn:deriv:approx:form}
  \begin{alignedat}{2}
    \pd{}{r}\left(c_{rr}\pd{u}{r}\right) &\approx \MM{{D}}_{rr}^{(c_{rr})}\VV{u},~&
    \pd{}{s}\left(c_{ss}\pd{u}{s}\right) &\approx \MM{{D}}_{ss}^{(c_{ss})}\VV{u},\\
    \pd{}{r}\left(c_{rs}\pd{u}{s}\right) &\approx \MM{{D}}_{rs}^{(c_{rs})}\VV{u},~&
    \pd{}{s}\left(c_{sr}\pd{u}{s}\right) &\approx \MM{{D}}_{sr}^{(c_{sr})}\VV{u}.
  \end{alignedat}
\end{equation}
To explicitly define the derivative operators, we first let $\VV{c}_{rr}$ be the
grid interpolant of the weighting function $c_{rr}$ and define $\MM{C}_{rr} =
\diag(\VV{c}_{rr})$. Additionally, the diagonal matrices of the coefficient
vectors along each of the grid lines are
\begin{align*}
  \mm{C}_{rr}^{:j} &= \diag\left(c_{rr}^{0j},\,\dots,\,c_{rr}^{Nj}\right),&
  \mm{C}_{rr}^{i:} &= \diag\left(c_{rr}^{i0},\,\dots,\,c_{rr}^{iN}\right).
\end{align*}
Similar matrices are constructed for $c_{ss}$, $c_{rs}$, and $c_{sr}$.  With
this, the derivative operators in \eref{eqn:deriv:approx:form} are
\begin{subequations}
  \begin{align*}
    {\left(\mm{H} \otimes \mm{H}\right)}
    \MM{{D}}^{(c_{rr})}_{rr} &=
    -\MM{A}^{(c_{rr})}_{rr}
    + \left(\mm{H} \mm{C}_{rr}^{N:} \otimes \vv{e}_{N}\vv{d}_{N}^{T}\right)
    - \left(\mm{H} \mm{C}_{rr}^{0:} \otimes \vv{e}_{0}\vv{d}_{0}^{T}\right)
    ,\\
    {\left(\mm{H} \otimes \mm{H}\right)}
    \MM{{D}}^{(c_{ss})}_{ss} &=
    -\MM{A}^{(c_{ss})}_{ss}
    + \left(\vv{e}_{N}\vv{d}_{N}^{T} \otimes \mm{H} \mm{C}_{ss}^{:N}\right)
    - \left(\vv{e}_{0}\vv{d}_{0}^{T} \otimes \mm{H} \mm{C}_{ss}^{:0}\right)
    ,\\
    \notag
    {\left(\mm{H} \otimes \mm{H}\right)}
    \MM{{D}}^{(c_{rs})}_{rs} &=
    \left(\mm{I} \otimes \mm{Q}\right)
    \MM{C}_{rs}
    \left(\mm{Q} \otimes \mm{I}\right)\\
    &=
    -\MM{A}^{(c_{rs})}_{rs}
    + \left(\mm{C}_{rs}^{N:} \mm{Q} \otimes \vv{e}_{N}\vv{e}_{N}^{T}\right)
    - \left(\mm{C}_{rs}^{0:} \mm{Q} \otimes \vv{e}_{0}\vv{e}_{0}^{T}\right)
    ,\\
    \notag
    {\left(\mm{H} \otimes \mm{H}\right)}
    \MM{{D}}_{sr}^{(c_{sr})} &=
    \left(\mm{Q} \otimes \mm{I}\right)
    \MM{C}_{sr}
    \left(\mm{I} \otimes \mm{Q}\right)\\
    &=
    -\MM{A}^{(c_{sr})}_{sr}
    + \left(\vv{e}_{N}\vv{e}_{N}^{T} \otimes \mm{C}_{sr}^{:N} \mm{Q}\right)
    - \left(\vv{e}_{0}\vv{e}_{0}^{T} \otimes \mm{C}_{sr}^{:0} \mm{Q}\right)
    ,
  \end{align*}
\end{subequations}
where $\otimes$ denotes the Kronecker product of two matrices.
Here, the matrices $\MM{A}_{rr}^{(c_{rr})}$, $\MM{A}_{ss}^{(c_{ss})}$,
$\MM{A}_{rs}^{(c_{rs})}$, and $\MM{A}_{sr}^{(c_{sr})}$ are
\begin{equation}
  \label{eqn:A}
  \begin{alignedat}{2}
    \MM{A}_{rr}^{(c_{rr})} &=
    \left(\mm{H} \otimes \mm{I}\right)
    \left[
      \sum_{j=0}^{N}
      \left(\vv{e}_{j} \otimes \mm{I}\right)
      \mm{A}^{\left(C^{:j}_{rr}\right)}
      \left(\vv{e}_{j}^{T} \otimes \mm{I}\right)\right],\\
    \MM{A}_{ss}^{(c_{ss})} &=
    \left(\mm{I} \otimes \mm{H}\right)
    \left[
      \sum_{i=0}^{N}
      \left(\mm{I} \otimes \vv{e}_{i}\right)
      \mm{A}^{\left(C^{i:}_{ss}\right)}
      \left(\mm{I} \otimes \vv{e}_{i}^{T}\right)\right],\\
    \MM{A}_{rs}^{(c_{rs})} &=
    \left(\mm{I} \otimes \mm{Q}^{T}\right)
    \MM{C}_{rs}
    \left(\mm{Q} \otimes \mm{I}\right)
    ,\\
    \MM{A}_{sr}^{(c_{sr})} &=
    \left(\mm{Q}^{T} \otimes \mm{I}\right)
    \MM{C}_{sr}
    \left(\mm{I} \otimes \mm{Q}\right)
    ,
  \end{alignedat}
\end{equation}
and can be viewed as approximations of the following integrals:
\begin{equation*}
  \begin{alignedat}{2}
  \int_{\hat{B}} \pd{u}{r} c_{rr} \pd{v}{r} &\approx \VV{u}^{T} \MM{A}_{rr}^{(c_{rr})} \VV{v},\quad&
  \int_{\hat{B}} \pd{u}{r} c_{rs} \pd{v}{s} &\approx \VV{u}^{T} \MM{A}_{rs}^{(c_{rs})} \VV{v},\\
  \int_{\hat{B}} \pd{u}{s} c_{sr} \pd{v}{r} &\approx \VV{u}^{T} \MM{A}_{sr}^{(c_{sr})} \VV{v},\quad&
  \int_{\hat{B}} \pd{u}{s} c_{ss} \pd{v}{s} &\approx \VV{u}^{T} \MM{A}_{ss}^{(c_{ss})} \VV{v}.
  \end{alignedat}
\end{equation*}
The following equality will be useful later which splits the volume and surface
contributions:
\begin{align}
  \label{eqn:A:structure}
  \notag
  {\left(\mm{H} \otimes \mm{H}\right)}&
  \left[
    -\MM{{D}}^{(c_{rr})}_{rr} - \MM{{D}}^{(c_{rs})}_{rs}
    -\MM{{D}}^{(c_{sr})}_{sr} - \MM{{D}}^{(c_{ss})}_{ss}
  \right]\\
  =\;&
    \MM{{A}}^{(c_{rr})}_{rr} + \MM{{A}}^{(c_{rs})}_{rs}
  + \MM{{A}}^{(c_{sr})}_{sr} + \MM{{A}}^{(c_{ss})}_{ss}\\
  \notag
  &
  - \mm{L}_{1}^{T} \mm{G}_{1}
  - \mm{L}_{2}^{T} \mm{G}_{2}
  - \mm{L}_{3}^{T} \mm{G}_{3}
  - \mm{L}_{4}^{T} \mm{G}_{4}.
\end{align}
Here the face point extraction operators are defined as
\begin{align*}
  \mm{L}_{1} &= \mm{I} \otimes \mm{e}_{0}^{T}, &
  \mm{L}_{2} &= \mm{I} \otimes \mm{e}_{N}^{T}, &
  \mm{L}_{3} &= \mm{e}_{0}^{T} \otimes \mm{I}, &
  \mm{L}_{4} &= \mm{e}_{N}^{T} \otimes \mm{I},
\end{align*}
and the matrices which compute the weighted boundary derivatives are
\begin{equation}\label{eqn:weight:bnd:der}
  \begin{split}
    \mm{G}_{1} =&
    -
    \left(\mm{H}\mm{C}_{rr}^{0:} \otimes \mm{d}_{0}^{T}\right)
    -
    \Big(\mm{C}_{rs}^{0:} \mm{Q} \otimes \mm{e}_{0}^{T}\Big),\\
    \mm{G}_{2} =&
    \phantom{-}
    \left(\mm{H}\mm{C}_{rr}^{N:} \otimes \mm{d}_{N}^{T}\right)
    +
    \left(\mm{C}_{rs}^{N:} \mm{Q} \otimes \mm{e}_{N}^{T}\right),\\
    \mm{G}_{3} =&
    -
    \left(\mm{d}_{0}^{T}  \otimes  \mm{H}\mm{C}_{ss}^{:0}\right)
    -
    \Big(\mm{e}_{0}^{T} \otimes \mm{C}_{sr}^{:0} \mm{Q}\Big) ,\\
    \mm{G}_{4} =&
    \phantom{-}
    \left(\mm{d}_{N}^{T} \otimes  \mm{H}\mm{C}_{ss}^{:N}\right)
    +
    \left(\mm{e}_{N}^{T} \otimes \mm{C}_{sr}^{:N} \mm{Q}\right).
  \end{split}
\end{equation}
The matrix $\mm{G}_{f}$ should be thought of as approximating the integral of
the boundary derivative, for example
\begin{equation*}
  \vv{v}^{T}\mm{L}_{1}^{T}\mm{G}_{1}\vv{u}
  \approx
  -\int_{0}^{1} {\left.\left(v\left(c_{rr}\pd{u}{r} +
  c_{rs}\pd{u}{s}\right)\right)\right|}_{r=1}.
\end{equation*}

\begin{remark}
  As noted above, for simplicity of notation we have assumed that the grid
  dimension is the same in both directions. This can be relaxed by letting the
  first argument in the Kronecker products be with respect to the $s$-direction
  and the second with respect to the $r$-direction. If the grid were different
  in each direction then, for example, $\left(\mm{H} \otimes \mm{H}\right)$
  would be replaced by $\left(\mm{H}_{s} \otimes \mm{H}_{r}\right)$ where
  $\mm{H}_{r}$ and $\mm{H}_{s}$ are the one-dimensional SBP norm matrices based
  on grids of size $N_{r}+1$ and $N_{s}+1$, respectively.
\end{remark}

\section{Model Problem}\label{sec:model}
As a model problem we consider the following scalar, anisotropic elliptic
equation in two spatial dimensions for the field $u$:
\begin{subequations}\label{eqn:gov}
  \begin{alignat}{2}
    \label{eqn:gov:nab}
    &-\nabla\cdot\left(\mm{b} \nabla u\right) = f,
    &&\mbox{ on } \Omega,\\
    \label{eqn:gov:dir}
    &u = g_{D},
    &&\mbox{ on } \partial \Omega_{D},\\
    \label{eqn:gov:neu}
    &\vv{n} \cdot \mm{b} \nabla u = g_{N},
    &&\mbox{ on } \partial \Omega_{N},\\
    \label{eqn:gov:inter}
    &
    \begin{cases}
      \avg{\vv{n} \cdot \mm{b} \nabla u} = 0,\\
      \jmp{u} = \delta,
    \end{cases}
    &&\mbox{ on } \Gamma_{I}.
  \end{alignat}
\end{subequations}
Here $\mm{b}(x, y)$ is a matrix valued function that is symmetric positive
definite and the scalar function $f(x,y)$ is a source function.  The boundary of
the domain has been partitioned into Dirichlet and Neumann segments, i.e.,
$\partial \Omega = \partial \Omega_{D} \cup \partial \Omega_{N}$ and $\partial
\Omega_{D} \cap \partial \Omega_{N} = \emptyset$. In the Neumann boundary
conditions, the vector $\vv{n}$ is the outward pointing normal. The functions
$g_{D}$ and $g_{N}$ are given data at the boundaries. An internal interface
$\Gamma_{I}$ has also been introduced. Along this interface the
$\mm{b}$-weighted normal derivative is taken to be continuous,  with jumps
allowed in the scalar field $u$; this allowance is made so that the scheme can
be used for the earthquake problems that motivate the work. Here $\avg{w}
= w^{+} + w^{-}$ denotes the sum of the scalar quantity on both sides of the
interface and $\jmp{w} = w^{+} - w^{-}$ is the difference across the interface;
the side defined as the plus- and minus-side are arbitrary though the choice
affects the sign of the jump data $\delta$.

Governing equations \eref{eqn:gov} are not solved directly on $\Omega$. Instead,
the equations are solved over each $B \in \BB(\Omega)$, where along each edge
of $B$ either continuity of the solution and the $\mm{b}$-weighted normal
derivative are enforced, or the appropriate boundary (or interface) condition.
Additionally, we do not solve directly on $B$ but instead transform to the
reference block $\hat{B}$. With this, \eref{eqn:gov:nab} becomes for each $B \in
\BB$:
\begin{subequations}\label{eqn:gov:transformed}
\begin{equation}\label{eqn:gov:Bhat}
  -\hat{\nabla} \cdot \left(\mm{c} \hat{\nabla} u\right) = Jf,
\end{equation}
where $\hat{\nabla}u = {[\pd{u}{r},\,\pd{u}{s}]}^{T}$, i.e., the $\hat{\nabla}$
is the del operator with respect to $(r,s)$, and the matrix valued
coefficient function $\mm{c}(r,s)$ has entries
\begin{align}
    c_{rr} &= J\left(b_{xx}\pd{r}{x}\pd{r}{x} + 2b_{xy}\pd{r}{x}\pd{r}{y}+
    b_{yy}\pd{r}{y}\pd{r}{y}\right),\\
    c_{ss} &= J\left(b_{xx}\pd{s}{x}\pd{s}{x} + 2b_{xy}\pd{s}{x}\pd{s}{y}+
    b_{yy}\pd{s}{y}\pd{s}{y}\right),\\
    c_{rs} &= c_{sr} = J\left(b_{xx}\pd{r}{x}\pd{s}{x} +
    b_{xy}\left(\pd{r}{x}\pd{s}{y}+\pd{r}{y}\pd{s}{x}\right)+
    b_{yy}\pd{r}{y}\pd{s}{y}\right),
\end{align}
where $b_{xx}$, $b_{yy}$, and $b_{xy} = b_{yx}$ are the four components of
$\mm{b}$.  For simplicity of notation we have suppressed the subscript $B$ on
terms in~\eref{eqn:gov:transformed} and following. If $J > 0$ then the matrix
formed by $c_{rr}$, $c_{ss}$, and $c_{rs}=c_{sr}$ is symmetric positive definite
and \eref{eqn:gov:Bhat} is of the same form as \eref{eqn:gov:nab} except on the
unit square domain $\hat{B}$.

The boundary conditions \eqref{eqn:gov:dir}--\eqref{eqn:gov:neu} and interface
conditions \eqref{eqn:gov:inter} are similarly transformed.  Namely, letting
$\partial \hat{B}_{k}$ for $k = 1,2,3,4$ be the faces of $\hat{B}$, we then
require that for each $k$:
  \begin{alignat}{2}
    &u = g_{D},
    &&\mbox{ if } \hat{B}_{k} \cap \partial \Omega_{D} \ne \emptyset,\\
    &\vv{\hat{n}}_{k} \cdot \mm{c} \hat{\nabla}u
    = \mathcal{S}_{J,k} g_{N},
    &&\mbox{ if } \hat{B}_{k} \cap \partial \Omega_{N} \ne \emptyset,\\
    \label{eqn:bc:jump}
    &
    \begin{cases}
      \avg{\vv{\hat{n}}_{k} \cdot \mm{c} \nabla u} = 0,\\
      \jmp{u} = \delta,
    \end{cases}
    &&\mbox{ if } \hat{B}_{k} \cap \Gamma_{I} \ne \emptyset,\\
    \label{eqn:bc:locked}
    &
    \begin{cases}
      \avg{\vv{\hat{n}}_{k} \cdot \mm{c} \nabla u} = 0,\\
      \jmp{u} = 0,
    \end{cases}
    &&\mbox{ otherwise}.
  \end{alignat}
\end{subequations}
Here $\vv{\hat{n}}_{k}$ is the outward pointing normal to face $\partial
\hat{B}_{k}$ in the reference space (not the physical space) and
$\mathcal{S}_{J,k}$ is the surface Jacobian which arises due to the fact that
$\mm{c}$ includes metric terms.  Condition \eref{eqn:bc:locked} is the same as
\eref{eqn:bc:jump} if $\delta$ is defined to be $0$ on these faces.
\section{Hybridized SBP Scheme}\label{sec:hsbp}
In the finite element literature, a hybrid method has one unknown
function on element interiors and a second unknown function on element traces
\citep[page 421]{Ciarlet2002FEM}.  For SBP methods, the
big idea is to write the method in terms of local problems and a global problem.
In the local problems, for each $B \in \BB$ the trace of the solution (i.e., the
boundary and interface data) is assumed and the transformed
equation~\eref{eqn:gov:transformed} is solved locally over $B$.
In the global problem the solution traces for each $B\in\BB$ are coupled.  As
will be shown, this technique will result in a linear system of the form
\begin{align}\label{eqn:full:system}
  \begin{bmatrix}
    \mm{\bar{M}} & \mm{\bar{F}}\\
    \mm{\bar{F}}^{T} & \mm{\bar{D}}
  \end{bmatrix}
  \begin{bmatrix}
    \vv{\bar{u}}\\
    \vv{\bar{\lambda}}
  \end{bmatrix}
  =
  \begin{bmatrix}
    \vv{\bar{g}}\\
    \vv{\bar{g}}_{\delta}
  \end{bmatrix}.
\end{align}
Here $\vv{\bar{u}}$ is the approximate solution to \eref{eqn:gov:transformed} at
all the grid points and $\vv{\bar{\lambda}}$ are the trace variables along
internal interfaces; trace variables related to boundary conditions can be
eliminated. The matrix $\mm{\bar{M}}$ is block diagonal with one symmetric
positive definite block for each $B \in \BB$, $\mm{\bar{D}}$ is diagonal, and
the matrix $\mm{\bar{F}}$ is sparse and incorporates the coupling conditions.
The right-hand side vector $\vv{\bar{g}}$ incorporates the boundary data
($g_{D}$, $g_{N}$) and source terms whereas $\vv{\bar{g}}_{\delta}$ incorporates
the interface data $\delta$.

Using the Schur complement we can transform \eref{eqn:full:system} to
\begin{align}\label{eqn:schur:system}
  \left(\mm{\bar{D}} - \mm{\bar{F}}^{T} \mm{\bar{M}}^{-1} \mm{\bar{F}}\right)
  \vv{\bar{\lambda}} = \vv{\bar{g}}_{\delta}-\mm{\bar{F}}^{T} \mm{\bar{M}}^{-1} \vv{\bar{g}},
\end{align}
resulting in a substantially reduced problem size since the number of trace
variables is significantly smaller than the number of solution variables.
Since $\mm{\bar{M}}$ is block diagonal, the inverse can be applied in a
decoupled manner for each $B \in \BB$. Thus there is a trade-off between the
number of blocks and the size of system \eref{eqn:schur:system}, since for a
fixed resolution increasing the number of blocks means that $\mm{\bar{M}}$ will
be more efficiently factored but the size of \eref{eqn:schur:system} will
increase through the introduction of additional trace variables.

Now that the big picture is laid, we proceed to introduce the local problem
(thus defining $\mm{\bar{M}}$) and then the global coupling (which defines
$\mm{\bar{F}}$ and $\mm{\bar{D}}$).

\subsection{The Local Problems}
For each $B \in \BB$ we solve \eref{eqn:gov:Bhat} with boundary conditions
\begin{align}
  \label{eqn:bc:lambda}
  u = \lambda_{k} \mbox{ on } \partial \hat{B}_{k} \mbox{ for } k =
  1,\,2,\,3,\,4,
\end{align}
where for now we assume that the trace functions $\lambda_{k}$ are known; later
these will be defined in terms of the boundary and coupling conditions. Using
the SBP operators defined in \sref{sec:2d} a discretization of
\eref{eqn:gov:Bhat} is
\begin{align}
  \label{eqn:disc}
  -\MM{{D}}_{rr}^{(C_{rr})} \VV{u}
  -\MM{{D}}_{rs}^{(C_{rs})} \VV{u}
  -\MM{{D}}_{sr}^{(C_{sr})} \VV{u}
  -\MM{{D}}_{ss}^{(C_{ss})} \VV{u} =
  \MM{J} \VV{f}
  + \VV{{b}}_{1}
  + \VV{{b}}_{2}
  + \VV{{b}}_{3}
  + \VV{{b}}_{4}.
\end{align}
Here $\VV{u}$ is the vector solution and $\MM{J}\VV{f}$ is the grid
approximation of $Jf$. The terms $\VV{{b}}_{1}$, $\VV{{b}}_{2}$, $\VV{{b}}_{3}$,
and $\VV{{b}}_{4}$ are the penalty terms which incorporate local boundary
conditions \eref{eqn:bc:lambda}; this is known as the SAT method and is
equivalent to the numerical flux in discontinuous Galerkin
formulations~\citep{CarpenterGottliebAbarbanel1994, Gassner2013}. The penalty
terms are taken to be of the form
\begin{equation*}
  \begin{split}
    \left(\mm{H} \otimes \mm{H}\right) \VV{{b}}_{1} &=
    \mm{G}_{1}^{T}
    \left[
      \mm{L}_{1} \VV{u}
      -
      \vv{\lambda}_{1}
      \right]
    +
    \mm{L}_{1}^{T}
    \left[
      \mm{H}\vv{\hat{\sigma}}_{1}
      -
      \mm{G}_{1} \VV{u}
      \right]
    ,\\
    \left(\mm{H} \otimes \mm{H}\right) \VV{{b}}_{2} &=
    \mm{G}_{2}^{T}
    \left[
      \mm{L}_{2} \VV{u}
      -
      \vv{\lambda}_{1}
      \right]
    +
    \mm{L}_{2}^{T}
    \left[
      \mm{H}\vv{\hat{\sigma}}_{2}
      -
      \mm{G}_{2} \VV{u}
      \right],\\
    \left(\mm{H} \otimes \mm{H}\right) \VV{{b}}_{3} &=
    \mm{G}_{3}^{T}
    \left[
      \mm{L}_{3} \VV{u}
      -
      \vv{\lambda}_{3}
      \right]
    +
    \mm{L}_{3}^{T}
    \left[
      \mm{H}\vv{\hat{\sigma}}_{3}
      -
      \mm{G}_{3} \VV{u}
      \right],\\
    \left(\mm{H} \otimes \mm{H}\right) \VV{{b}}_{4} &=
    \mm{G}_{4}^{T}
    \left[
      \mm{L}_{4} \VV{u}
      -
      \vv{\lambda}_{4}
      \right]
    +
    \mm{L}_{4}^{T}
    \left[
      \mm{H}\vv{\hat{\sigma}}_{4}
      -
      \mm{G}_{4} \VV{u}
      \right],
  \end{split}
\end{equation*}
where $\vv{\lambda}_{1}$, $\vv{\lambda}_{2}$, $\vv{\lambda}_{3}$, and
$\vv{\lambda}_{4}$ are the grid values of $\lambda$ along each of the four
faces. The yet-to-be-defined vectors $\mm{H}\vv{\hat{\sigma}}_{1}$,
$\mm{H}\vv{\hat{\sigma}}_{2}$, $\mm{H}\vv{\hat{\sigma}}_{3}$, and
$\mm{H}\vv{\hat{\sigma}}_{4}$ are (within the HDG literature) known as the
numerical fluxes and will be linear functions of the solution vector $\VV{u}$
and trace variables $\mm{\lambda}_{1}$, $\mm{\lambda}_{2}$, $\mm{\lambda}_{3}$,
and $\mm{\lambda}_{4}$.  We have scaled $\vv{\hat{\sigma}}_{k}$ by the matrix
$\mm{H}$ to highlight that these would be integrated flux terms in the HDG
literature and $\hat{\sigma}_{k}$ can be thought of as an approximation of
$\vv{\hat{n}}_{k} \cdot \mm{c} \hat{\nabla}u$.

Motivated by the hybridized symmetric interior penalty (IP-H) method
\citep{CockburnGopalakrishnanLazarov2009}, we take the penalty fluxes to be of
the form
\begin{align}
  \label{eqn:IP:flux}
  \mm{H}\vv{\hat{\sigma}}_{k} =
  \mm{G}_{k} \VV{u} - \mm{H} \mm{\tau}_{k}
  \left(\mm{L}_{k}\VV{u} - \vv{\lambda}_{k}\right);
\end{align}
thus $\mm{H}\vv{\hat{\sigma}}_{k}$ includes the norm-weighted boundary
derivative $\mm{G}_{k}$~\eref{eqn:weight:bnd:der} and
penalties related to the trace function $\lambda_k$.
Here $\mm{\tau}_{k}$ is a positive, diagonal matrix of penalty parameters, which
as we will see below, is required to be sufficiently large for the
local problem to be positive definite.

Multiplying \eref{eqn:disc} by $\mm{H} \otimes \mm{H}$, using the structure of
the derivative matrices \eref{eqn:A:structure}, and collecting all terms
involving $\VV{u}$ on the left-hand side gives a system of the form
\begin{subequations}\label{eqn:full:disc}
\begin{align}\label{eqn:disc:system}
  \left(\MM{A} + \MM{C}_{1} + \MM{C}_{2} + \MM{C}_{3} + \MM{C}_{4}\right) \VV{u} =
  \MM{M} \VV{u} = \VV{q}.
\end{align}
Here the left-hand side matrices are
\begin{align}
  \label{eqn:MMA}
  \MM{A} &= \MM{A}^{\left(c_{rr}\right)}_{rr}
    +\MM{A}^{\left(c_{ss}\right)}_{ss}
    +\MM{A}^{\left(c_{rs}\right)}_{rs}
    +\MM{A}^{\left(c_{sr}\right)}_{sr},\\
  \label{eqn:MMC}
  \MM{C}_{k} &= -\mm{L}_{k}^{T} \mm{G}_{k} - \mm{G}_{k}^{T} \mm{L}_{k} +
  \mm{L}_{k}^{T} \mm{H}\mm{\tau}_{k} \mm{L}_{k},
  \mbox{ for } k = 1,2,3,4,
\end{align}
and the right-hand side vector is
\begin{align}
  \label{eqn:qtilde}
  \VV{q} &= \left(\mm{H} \otimes \mm{H}\right) \MM{J} \VV{f}
  - \sum_{k=1}^{4} \mm{F}_{k}\vv{\lambda}_{k},
\end{align}
with the face matrix $\mm{F}_{k}$ being defined as
\begin{align}
  \label{eqn:mmFk}
  \mm{F}_{k} = \mm{G}_{k}^{T} - \mm{L}_{k}^{T}\mm{H} \mm{\tau}_{k};
\end{align}
the utility of defining $\mm{F}_{k}$ is a later connection with the structure of
the monolithic linear system \eref{eqn:full:system}.
\end{subequations}

The following theorem characterizes the structure of $\MM{M}$.
\begin{theorem}\label{thm:loc:PD}
  The local problem matrix $\MM{M}$ is symmetric positive definite if the
  components of the diagonal penalty matrices $\mm{\tau}_{k}$ for $k=1,2,3,4$
  are sufficiently large.
\end{theorem}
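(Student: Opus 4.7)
The plan is to handle symmetry and positive definiteness separately. Symmetry is mostly bookkeeping: $\MM{A}_{rr}^{(c_{rr})}$ and $\MM{A}_{ss}^{(c_{ss})}$ are sums of outer products with the scalar-coefficient SPD blocks $\mm{A}^{(c)}$, so each is symmetric; because $c_{rs}=c_{sr}$, the Kronecker-product definitions in \eref{eqn:A} give $\bigl(\MM{A}_{rs}^{(c_{rs})}\bigr)^{T}=\MM{A}_{sr}^{(c_{sr})}$, so their sum is symmetric; and each $\MM{C}_{k}$ from \eref{eqn:MMC} is symmetric by construction since $\mm{H}$ and $\mm{\tau}_{k}$ are diagonal.

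For positive definiteness, I would write $\VV{u}^{T}\MM{M}\VV{u}=\VV{u}^{T}\MM{A}\VV{u}+\sum_{k=1}^{4}\bigl(-2\VV{u}^{T}\mm{L}_{k}^{T}\mm{G}_{k}\VV{u}+\VV{u}^{T}\mm{L}_{k}^{T}\mm{H}\mm{\tau}_{k}\mm{L}_{k}\VV{u}\bigr)$ and use the compatibility splitting $\mm{A}^{(c)}=\mm{R}^{(c)}+\mm{D}_{r}^{T}\mm{C}\mm{H}\mm{D}_{r}$ grid-line-by-grid-line inside $\MM{A}_{rr}^{(c_{rr})}$ and $\MM{A}_{ss}^{(c_{ss})}$. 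Combined with the Kronecker structure of $\MM{A}_{rs}^{(c_{rs})}+\MM{A}_{sr}^{(c_{sr})}$, this recasts $\VV{u}^{T}\MM{A}\VV{u}$ as a pointwise quadratic form in the discrete reference gradient of $\VV{u}$ weighted by the SPD coefficient matrix $\mm{c}$, plus strictly positive-semidefinite remainder contributions from the $\mm{R}^{(c)}$ pieces.

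The delicate step is controlling each indefinite cross term $-2\VV{u}^{T}\mm{L}_{k}^{T}\mm{G}_{k}\VV{u}$. Since $\mm{G}_{k}$ mixes a normal boundary derivative (via $\mm{d}_{0}^{T}$ or $\mm{d}_{N}^{T}$) with an interior tangential derivative (via $\mm{Q}$) through $c_{rs}$, I would apply a weighted Young's inequality separately to the two pieces, converting the cross term into a small multiple of a face-trace quadratic form $\mm{L}_{k}^{T}\mm{H}\mm{L}_{k}$ plus a volume contribution involving the boundary normal-derivative-squared and the interior tangential-derivative-squared. The Borrowing Lemma of \citet{VM14}, referenced in footnote~\ref{footnote:x1}, then lets me borrow a small, computable portion of $\mm{A}^{(c)}$ along each grid line abutting face $k$ to absorb the boundary normal-derivative-squared piece; the tangential-squared piece is absorbed by the discrete gradient quadratic form already extracted from $\MM{A}$. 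Everything that remains on face $k$ has the form $\mm{L}_{k}^{T}\mm{H}\bigl(\mm{\tau}_{k}-\mm{\tau}_{k}^{\ast}\bigr)\mm{L}_{k}$ for some face-dependent threshold $\mm{\tau}_{k}^{\ast}$ depending only on block-local data, so choosing $\mm{\tau}_{k}>\mm{\tau}_{k}^{\ast}$ componentwise yields a strictly positive quadratic form on every nonzero $\VV{u}$.

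The main obstacle is performing the Young/Borrowing splitting so that (i)~the total volume portion borrowed from $\MM{A}$ to neutralize contributions from all four faces simultaneously does not exceed the positive reserve provided by the compatibility remainders $\mm{R}^{(c)}$; (ii)~the variable anisotropic nature of $\mm{c}$ arising from curvilinear coordinate transformations does not spoil the pointwise positivity arguments, which ultimately requires full SPD-ness of the $2\times 2$ coefficient matrix $\mm{c}$ rather than positivity of $c_{rr}$ and $c_{ss}$ alone; and (iii)~the threshold $\mm{\tau}_{k}^{\ast}$ comes out determined purely by block-local quantities (geometry, $\mm{c}$, and the SBP operators), as needed for the block-local penalty claim highlighted in the introduction.
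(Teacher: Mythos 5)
Your plan assembles the same ingredients as the paper's proof in \aref{sec:app:loc:PD}---the Borrowing Lemma of \citet{VM14}, the compatibility (remainder) splitting, and pointwise positive definiteness of the $2\times 2$ coefficient matrix---so the route is recognizably the same, but two organizational choices differ, and they are exactly what discharges the obstacles you flag at the end. First, instead of applying Young's inequality to the cross term $-2\VV{u}^{T}\mm{L}_{k}^{T}\mm{G}_{k}\VV{u}$, the paper completes the square exactly: with $\mm{F}_{k}=\mm{G}_{k}^{T}-\mm{L}_{k}^{T}\mm{H}\mm{\tau}_{k}$ one has $\MM{C}_{k}=\mm{F}_{k}\mm{H}^{-1}\mm{\tau}_{k}^{-1}\mm{F}_{k}^{T}-\mm{G}_{k}^{T}\mm{H}^{-1}\mm{\tau}_{k}^{-1}\mm{G}_{k}$, so the entire face penalty is absorbed into a manifestly positive semidefinite term (which, since $\mm{F}_{k}^{T}\VV{1}=-\mm{H}\mm{\tau}_{k}\vv{1}\neq\vv{0}$, also removes the constant nullspace and yields strict definiteness), and the only indefinite leftover is $-\mm{G}_{k}^{T}\mm{H}^{-1}\mm{\tau}_{k}^{-1}\mm{G}_{k}$, which already carries the factor $\mm{\tau}_{k}^{-1}$; no auxiliary Young parameter is needed, and the explicit threshold \eref{eqn:tau} drops out of a pointwise $2\times 2$ eigenvalue computation. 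Second, your budget concerns (i) and (ii) are resolved together by first peeling off the minimal eigenvalue $\psi_{\min}$ of the coefficient matrix at each grid point (Lemma~\ref{app:lemma:split}): the shifted form $\MM{\mathcal{A}}$, with pointwise coefficients $C-\psi_{\min}\mm{I}$, remains positive semidefinite with nullspace exactly $\spanspace\{\VV{1}\}$ and is never touched by the borrowing, while only the $\psi_{\min}$-weighted pieces $\MM{A}_{rr}^{(\psi_{\min})}$ and $\MM{A}_{ss}^{(\psi_{\min})}$ are spent---each split with weight $\tfrac{1}{2}$ between the normal-derivative borrowing on one pair of faces and the tangential-derivative control on the other pair (the latter obtained from the remainder inequality restricted to the boundary grid lines, with the corner weight $\alpha$ of $\mm{H}$). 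That $\tfrac{1}{2}$-averaging is precisely the accounting that lets all four faces be handled simultaneously without overdrawing the reserve, and it is why the resulting $\mm{\tau}_{k}$ bounds are purely block-local, settling your point (iii). As written, your outline is a correct plan whose hardest steps are correctly named but not yet carried out; with these two refinements it becomes the paper's argument.
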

\begin{proof}
  See \aref{sec:app:loc:PD}
\end{proof}

\begin{remark}
  Explicit bounds for the penalty terms are given in the proof of
  Theorem~\ref{thm:loc:PD} given in \aref{sec:app:loc:PD};
  see~\eref{eqn:tau}. Since they are fairly complicated to
  state, we have chosen to omit them from the statement of the theorem.
\end{remark}

\begin{corollary}
  The local solution $\VV{u}$ is uniquely determined by $\MM{f}$,
  $\mm{\lambda}_{1}$, $\mm{\lambda}_{2}$, $\mm{\lambda}_{3}$, and
  $\mm{\lambda}_{4}$.
\end{corollary}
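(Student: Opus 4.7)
The corollary is a one-step consequence of Theorem~\ref{thm:loc:PD}. The plan is to observe that the local problem is exactly the linear system $\MM{M}\VV{u}=\VV{q}$ from~\eref{eqn:disc:system}, where by the theorem $\MM{M}$ is symmetric positive definite and hence invertible, so that $\VV{u}=\MM{M}^{-1}\VV{q}$ is uniquely determined once $\VV{q}$ is specified.

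Thus it suffices to verify that $\VV{q}$ depends on no data other than the ones listed in the statement. Inspecting~\eref{eqn:qtilde}, $\VV{q}=(\mm{H}\otimes\mm{H})\MM{J}\VV{f}-\sum_{k=1}^{4}\mm{F}_{k}\vv{\lambda}_{k}$, so its only inputs are the source grid function $\VV{f}$ (together with the fixed metric factor $\MM{J}$) and the four trace vectors $\vv{\lambda}_{1},\vv{\lambda}_{2},\vv{\lambda}_{3},\vv{\lambda}_{4}$. The operators $\MM{J}$, $\mm{H}$, and the face matrices $\mm{F}_{k}$ defined in~\eref{eqn:mmFk} depend only on the block geometry, the coefficient $\mm{c}$, the SBP operators, and the penalty parameters $\mm{\tau}_{k}$; these are fixed data of the local problem, not free unknowns.

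I expect no real obstacle here: the work has already been done in establishing Theorem~\ref{thm:loc:PD}, and the remaining step is simply to read off the dependence structure of $\VV{q}$ from its definition. The only thing one must be slightly careful about is to note that the penalty parameters $\mm{\tau}_{k}$ are assumed large enough for the theorem's hypothesis to hold, so the corollary implicitly inherits that assumption.
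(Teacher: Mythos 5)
Your proposal is correct and is essentially the paper's own argument: invoke Theorem~\ref{thm:loc:PD} to get invertibility of $\MM{M}$ and note that $\VV{f}$ together with the four trace vectors determines the right-hand side $\VV{q}$ via~\eref{eqn:qtilde}. The extra remarks on the fixed nature of $\MM{J}$, $\mm{H}$, $\mm{F}_{k}$, and the inherited assumption on $\mm{\tau}_{k}$ are sound but not needed beyond what the paper states.
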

\begin{proof}
  Follows directly from Theorem~\ref{thm:loc:PD} since $\MM{f}$,
  $\mm{\lambda}_{1}$, $\mm{\lambda}_{2}$, $\mm{\lambda}_{3}$, and
  $\mm{\lambda}_{4}$ determine the right-hand side vector $\MM{q}$.
\end{proof}
\subsection{Global Problem}

We now turn to the global problem, namely the system that determines the trace
vector $\mm{\bar{\lambda}}$. To do this we let $\FF$ be the set of all block
faces with $\FF_{D}$ and $\FF_{N}$ being those faces that occur on the Dirichlet
and Neumann boundaries, respectively, and $\FF_{I}$ being the interior faces;
internal faces that both have a jump and those that do not are included in
$\FF_{I}$ with the latter having $\delta := 0$.  For each face $f \in \FF_{D}
\cup \FF_{N}$ we let the corresponding block and block face be $B_{f} \in \BB$
and $k_{f}$, respectively.  For each face $f \in \FF_{I}$ we let $B^{\pm}_{f}
\in \BB$ be the blocks connected to the two sides of the interface and let
$k^{\pm}_{f}$ be the connected sides of the blocks; for the jump interfaces the
plus- and minus-sides should correspond to those in \eref{eqn:bc:jump}. In what
follows the subscript $f$ is dropped when only one face $f \in \FF$ is being
considered.  Finally, for each $B \in \BB$ we let $\vv{\lambda}_{k} = \mm{P}_{B,k}
\vv{\bar{\lambda}}$, where $\mm{P}_{B,k}$ selects the values out of the global
vector of trace variables $\vv{\bar{\lambda}}$ that correspond to face $k$ and
block $B$.

\paragraph{Dirichlet Boundary Conditions:}
Consider face $f \in \FF_{D}$ which corresponds to face $k$ of block $B
\in \BB$. In this case we set $\vv{\lambda}_{k}$ in \eref{eqn:IP:flux} to be
\begin{align}
  \mm{\lambda}_{k} = \vv{g}_{D,f},
\end{align}
where $\vv{g}_{D,f}$ denotes the projection of $g_{D}$ to face $f$. With this
the penalty term $\MM{b}_{k}$ becomes
\begin{align}
  \label{eqn:numbc:Dirichlet}
  \left(\mm{H} \otimes \mm{H} \right) \MM{b}_{k}
  =
  \mm{F}_{k}
  \left(
  \mm{L}_{k} \VV{u}
  -
  \vv{g}_{D,k}
  \right),
\end{align}
which is penalization of the grid function along interface $k$ to the Dirichlet
boundary data. Since $\mm{\lambda}_{k}$ is determined independently of $\VV{u}$
and the structure of the matrix $\MM{M}$ remains unchanged.

\paragraph{Neumann Boundary Condition:}
Consider face $f \in \FF_{N}$ which corresponds to face $k$ of block $B
\in \BB$. In this case we require that $\mm{\lambda}_{k}$ in \eref{eqn:IP:flux}
satisfies
\begin{align*}
  \mm{H}\vv{\hat{\sigma}}_{k} = \mm{H} \mm{S}_{J,k} \vv{g}_{N,f},
\end{align*}
where $\vv{g}_{N,f}$ denotes the projection of $g_{N}$ to face $f$ and
$\mm{S}_{J,k}$ is a diagonal matrix of surface Jacobians along block face $k$.
As with the Dirichlet boundary condition, the variable $\mm{\lambda}_{k}$ can be
found uniquely in terms of the boundary data:
\begin{align}
  \label{eqn:numbc:Neumann}
  \vv{\lambda}_{k}
  =
  \mm{L}_{k}\VV{u}
  +
  \mm{\tau}_{k}^{-1}
  \left(
  \mm{S}_{J,k}
  \vv{g}_{N,k}
  -
  \mm{H}^{-1}
  \mm{G}_{k} \VV{u}
  \right),
\end{align}
which represents penalization of the boundary derivative towards the Neumann
boundary data. If $\mm{\lambda}_{k}$ is eliminated in this fashion from the
scheme, then $\MM{M}$ is modified as
\begin{align}
  \label{eqn:M:Neumann}
  \MM{M} := \MM{M} -
  \mm{F}_{k}
  \mm{H}^{-1}\mm{\tau}_{k}^{-1}
  \mm{F}_{k}^{T}.
\end{align}

\begin{theorem}\label{thm:Neumann:loc:PD}
  The modified local problem matrix $\MM{M}$ in \eref{eqn:M:Neumann} is
  symmetric positive definite if the components of the diagonal penalty matrices
  $\mm{\tau}_{k}$ for $k=1,2,3,4$ are sufficiently large and at least one face
  of the local block $B \in \BB$ is a Dirichlet boundary or interior interface.
\end{theorem}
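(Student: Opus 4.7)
The plan is to compute how each Neumann elimination reshapes $\MM{M}$ and then adapt the SPD argument of Theorem~\ref{thm:loc:PD}, with one extra step to rule out discrete constants from the null space. Symmetry is immediate, since $\MM{M}$ is symmetric by Theorem~\ref{thm:loc:PD} and each subtracted piece $\mm{F}_{k}\mm{H}^{-1}\mm{\tau}_{k}^{-1}\mm{F}_{k}^{T}$ is visibly symmetric. Using $\mm{F}_{k} = \mm{G}_{k}^{T} - \mm{L}_{k}^{T}\mm{H}\mm{\tau}_{k}$ from \eref{eqn:mmFk}, direct expansion yields the identity $\MM{C}_{k} - \mm{F}_{k}\mm{H}^{-1}\mm{\tau}_{k}^{-1}\mm{F}_{k}^{T} = -\mm{G}_{k}^{T}\mm{H}^{-1}\mm{\tau}_{k}^{-1}\mm{G}_{k}$, so if $\FF^{N}_{B}$ denotes the Neumann faces of $B$ and $\FF^{\star}_{B}$ the remaining (Dirichlet and interior) faces, the modified local matrix has the compact form $\MM{M} = \MM{A} + \sum_{j \in \FF^{\star}_{B}}\MM{C}_{j} - \sum_{k \in \FF^{N}_{B}}\mm{G}_{k}^{T}\mm{H}^{-1}\mm{\tau}_{k}^{-1}\mm{G}_{k}$. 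At each Neumann face the IP-H penalty $\MM{C}_{k}$ has been replaced by a single negative quadratic in $\mm{G}_{k}\VV{u}$ whose magnitude shrinks as $\tau_{k}$ grows.

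For positive definiteness I would mirror the proof of Theorem~\ref{thm:loc:PD} in \aref{sec:app:loc:PD}. Its engine is the Borrowing Lemma (cf.\ footnote~\ref{footnote:x1}), which peels off from $\MM{A}$ a boundary-supported positive quadratic in the weighted normal derivative $\mm{G}_{k}\VV{u}$ at each face, leaving a PSD remainder $\MM{R}$ that still controls a discrete $H^{1}$-seminorm of $\VV{u}$. On each non-Neumann face $j \in \FF^{\star}_{B}$ the original argument goes through verbatim: the borrowed $\mm{G}_{j}$-quadratic plus the IP penalty $\mm{L}_{j}^{T}\mm{H}\mm{\tau}_{j}\mm{L}_{j}$ dominates the cross term $-\mm{L}_{j}^{T}\mm{G}_{j} - \mm{G}_{j}^{T}\mm{L}_{j}$ once $\tau_{j}$ is taken large enough. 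On each Neumann face $k \in \FF^{N}_{B}$ the new negative term carries a factor of $\mm{\tau}_{k}^{-1}$, so taking $\tau_{k}$ sufficiently large makes it dominated by the borrowed $\mm{G}_{k}$-quadratic alone.

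The remaining task is to excise the constant null space. After borrowing, the bound has the shape $\VV{u}^{T}\MM{M}\VV{u} \geq \VV{u}^{T}\MM{R}\VV{u} + \sum_{j \in \FF^{\star}_{B}} \mu_{j}\,\VV{u}^{T}\mm{L}_{j}^{T}\mm{H}\mm{\tau}_{j}\mm{L}_{j}\VV{u}$ with $\mu_{j} > 0$, and the quadratic form of $\MM{R}$ vanishes only on discrete constants $\VV{u} = c\vv{1}$ (since $\mm{D}_{r}\vv{1} = \vv{0}$ and the analogous identities annihilate $\MM{A}$). On such constants $\mm{G}_{k}\vv{1} = \vv{0}$, so the Neumann contributions are harmless, while on any non-Neumann face $j$ the surviving term equals $c^{2}\vv{1}_{j}^{T}\mm{H}\mm{\tau}_{j}\vv{1}_{j} > 0$. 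This is precisely where the hypothesis $\FF^{\star}_{B} \neq \emptyset$ enters: it forces $\VV{u}^{T}\MM{M}\VV{u} = 0 \Longrightarrow c = 0 \Longrightarrow \VV{u} = \vv{0}$, yielding SPD.

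The main obstacle is the quantitative bookkeeping in the borrowing step: one must simultaneously, at all four faces, siphon off enough of $\MM{A}$ to dominate both the IP cross terms at non-Neumann faces and the new $\mm{\tau}_{k}^{-1}$-weighted quadratics at Neumann faces, while still leaving $\MM{R}$ positive enough to control the interior energy and to isolate constants. The resulting explicit penalty bound should have the same flavor as \eref{eqn:tau} in the proof of Theorem~\ref{thm:loc:PD}, but with each Neumann face contributing a distinct per-face share to the admissible $\tau_{k}$.
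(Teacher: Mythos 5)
Your proposal is correct and follows essentially the same route as the paper: the identity $\MM{C}_{k} - \mm{F}_{k}\mm{H}^{-1}\mm{\tau}_{k}^{-1}\mm{F}_{k}^{T} = -\mm{G}_{k}^{T}\mm{H}^{-1}\mm{\tau}_{k}^{-1}\mm{G}_{k}$ is exactly the paper's \eref{eqn:Ck:Neumann}, the domination of these Neumann-face terms by the borrowed pieces of $\MM{A}$ is the same reduction to the Theorem~\ref{thm:loc:PD} machinery under the same bound \eref{eqn:tau}, and the exclusion of the constant null vector via a surviving non-Neumann face (using $\mm{G}_{k}\VV{1}=\vv{0}$ while $\mm{F}_{k}^{T}\VV{1}=-\mm{H}\mm{\tau}_{k}\vv{1}\neq\vv{0}$) is the paper's closing argument. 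The only cosmetic difference is that you phrase the non-Neumann face estimate as dominating the cross terms directly rather than via the completed square $\mm{F}_{k}\mm{H}^{-1}\mm{\tau}_{k}^{-1}\mm{F}_{k}^{T}$, which is algebraically equivalent.
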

\begin{proof}
  See \aref{sec:app:Neumann:loc:PD}
\end{proof}

\paragraph{Interfaces:}
We now consider an $f \in \FF_{I}$ which is connected to face $k^{\pm}$ of
blocks $B^{\pm}\in\BB$; below a subscript $B^{\pm}$ is added to denoted terms
associated with each block and a subscript $f, B^{\pm}$ for terms associated
with the respective faces of the blocks.  Continuity of the
solution and the $\mm{b}$-weighted normal derivative are enforced by
requiring
\begin{align}
  \label{eqn:locked:IC}
  \mm{H}\vv{\hat{\sigma}}_{f,B^{+}} +
  \mm{H}\vv{\hat{\sigma}}_{f,B^{-}} = \vv{0};
\end{align}
since $\vv{\hat{\sigma}}_{f,B^{\pm}}$ includes the outward pointing normal to
the blocks, condition \eref{eqn:locked:IC} implies that the terms are equal in
magnitude but opposite in sign. Using penalty formulation \eref{eqn:IP:flux} in
\eref{eqn:locked:IC} with
$\vv{\lambda}_{k}$ replaced with $\vv{\lambda}_{f} \mp \vv{\delta}_{f}/2$ gives
\begin{equation*}
  \begin{split}
    \vv{0} =\;& \left(\mm{G}_{f,B^{+}} \VV{u}_{B^{+}} + \mm{G}_{f,B^{-}} \VV{u}_{B^{-}}\right)\\
    &- \mm{H}\mm{\tau}_{f,B^{+}}\left(\mm{L}_{f,B^{+}} \VV{u}_{B^{+}} - \left(\mm{\lambda}_{f} - \frac{1}{2}\vv{\delta}_{f}\right) \right)\\
    &- \mm{H}\mm{\tau}_{f,B^{-}}\left(\mm{L}_{f,B^{-}} \VV{u}_{B^{-}} - \left(\mm{\lambda}_{f} + \frac{1}{2}\vv{\delta}_{f}\right) \right),
  \end{split}
\end{equation*}
where the first term represents penalization of the face normal derivative on
the two sides to the common value and the second two terms the penalization of
the $u_{B^{\pm}}$ to $\lambda_f \mp \delta_f/2$. By grouping terms, the
above equation can be rewritten as
\begin{align}
  \label{eqn:global:face:couple}
  \mm{F}_{f,B^{+}}^{T} \VV{u}_{B^{+}}
  +
  \mm{F}_{f,B^{-}}^{T} \VV{u}_{B^{-}}
  +
  \mm{D}_{f} \mm{\lambda}_{f}
  =
  \frac{1}{2}\mm{H}\left(
  \mm{\tau}_{f, B^{+}}
  -
  \mm{\tau}_{f, B^{-}}
  \right)
  \vv{\delta}_{f}.
\end{align}
Here the matrices $\mm{F}_{f,B^{\pm}}$ are defined by \eref{eqn:mmFk} and
the diagonal matrix $\mm{D}_{f}$ is
\begin{align*}
  \mm{D}_{f} = \mm{H}\left(
  \mm{\tau}_{f, B^{+}}
  +
  \mm{\tau}_{f, B^{-}}
  \right).
\end{align*}

With this, all the terms in linear system \eref{eqn:full:system} can be defined.
The solution vector and trace vectors are
\begin{align*}
  \vv{\bar{u}} &=
  \begin{bmatrix}
    \VV{u}_{1}\\
    \VV{u}_{2}\\
    \vdots\\
    \VV{u}_{N_{b}}
  \end{bmatrix},&
  \vv{\bar{\lambda}} &=
  \begin{bmatrix}
    \vv{\lambda}_{1}\\
    \vv{\lambda}_{2}\\
    \vdots\\
    \vv{\lambda}_{N_{I}}
  \end{bmatrix},
\end{align*}
with $N_{I}$ being the number of interfaces.
Multiplying out the terms in \eref{eqn:full:system} gives
\begin{equation*}
  \begin{split}
    \mm{\bar{M}} \vv{\bar{u}} +  \mm{\bar{F}} \vv{\bar{\lambda}} &= \vv{\bar{g}},\\
    \mm{\bar{F}}^{T} \vv{\bar{u}} + \mm{\bar{D}}\vv{\bar{\lambda}} &=
    \vv{\bar{g}}_{\delta}.
  \end{split}
\end{equation*}
This form, along with the definition of the local problem
\eref{eqn:full:disc} and the coupling equation
\eref{eqn:global:face:couple}, implies that the matrices $\mm{\bar{M}}$ and
$\mm{\bar{D}}$ are
\begin{align*}
  \mm{\bar{M}} &=
  \begin{bmatrix}
    \MM{M}_{1}\\
    & \MM{M}_{2}\\
    && \ddots\\
    &&& \MM{M}_{N_{b}}
  \end{bmatrix},&
  \mm{\bar{D}} &=
  \begin{bmatrix}
    \MM{D}_{1}\\
    & \MM{D}_{2}\\
    && \ddots\\
    &&& \MM{D}_{N_{I}}
  \end{bmatrix}.
\end{align*}
Furthermore, since each matrix $\MM{D}_{f}$ is diagonal, the matrix
$\mm{\bar{D}}$ is also diagonal. To write down the form of $\mm{\bar{F}}$ it is
convenient to think of it as a block matrix with sub-matrix $fB$ being the
columns associated with interface $f$ and rows associated with block $B$. Thus,
block $\mm{\bar{F}}_{fB}$ is zero unless block $B$ is connected to interface $f$
through local face $k_{f}$ in which case
\begin{align*}
  \mm{\bar{F}}_{fB} = \mm{F}_{k_{f}, B}.
\end{align*}
The right-hand side vector $\vv{\bar{g}}$ is defined from the boundary data
using \eref{eqn:numbc:Dirichlet} and \eref{eqn:numbc:Neumann}, and similarly
$\vv{\bar{g}}_{\delta}$ is defined from the right-hand side of
\eref{eqn:global:face:couple}.

In order to prove the positive definiteness of the coupled system, we first note
that $\mm{\bar{M}}$ and $\mm{\bar{D}}$ are symmetric positive definite since
they are block diagonal matrices formed from symmetric positive definite
matrices. If the trace variables $\vv{\bar{\lambda}}$ are eliminated using the
Schur complement of the $\mm{\bar{D}}$ block the system for $\vv{\bar{u}}$, the
resulting system is
\begin{align}\label{eqn:schur:system:D}
  \left(\mm{\bar{M}} - \mm{\bar{F}} \mm{\bar{D}}^{-1} \mm{\bar{F}}^{T}\right)
  \vv{\bar{u}} = \vv{\bar{g}} - \mm{\bar{F}} \mm{\bar{D}}^{-1}
  \vv{\bar{g}}_{\delta}.
\end{align}
This corresponds to the elimination of the trace variables by solving the
coupling relation \eref{eqn:global:face:couple} for $\vv{\lambda}_{f}$ and
substituting in the local problem \eref{eqn:full:disc} for each block.
The matrix on the left-hand side of \eref{eqn:schur:system:D} is characterized
by the following theorem which says that if the individual local problems are
symmetric positive definite, then the coupled problem is symmetric positive
definite.
\begin{theorem}\label{thm:coupled:PD}
  The matrix $\mm{\bar{M}} - \mm{\bar{F}} \mm{\bar{D}}^{-1} \mm{\bar{F}}^{T}$ is
  symmetric positive definite as long as the penalty matrices $\vv{\tau}_{k,B}$
  for $k = 1,2,3,4$ and $B \in \BB$ are sufficiently large that each
  $\MM{M}_{B}$ is positive definite.
\end{theorem}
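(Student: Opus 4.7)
The plan is to reduce the positive-definiteness of $\mm{\bar{M}} - \mm{\bar{F}}\mm{\bar{D}}^{-1}\mm{\bar{F}}^{T}$ to a strengthened block-local estimate analogous to the one underlying Theorem~\ref{thm:loc:PD}. Symmetry is immediate since $\mm{\bar{M}}$ is symmetric (each $\MM{M}_{B}$ is), $\mm{\bar{D}}$ is diagonal with positive entries, and the Schur complement of a symmetric block matrix with symmetric diagonal blocks is symmetric.

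For positive-definiteness, I would take an arbitrary nonzero $\vv{v} = {[\VV{v}_{B}]}_{B\in\BB}$ and expand
\[
\vv{v}^{T}\bigl(\mm{\bar{M}} - \mm{\bar{F}}\mm{\bar{D}}^{-1}\mm{\bar{F}}^{T}\bigr)\vv{v}
= \sum_{B\in\BB} \VV{v}_{B}^{T}\MM{M}_{B}\VV{v}_{B} - \sum_{f\in\FF_{I}} \vv{w}_{f}^{T}\MM{D}_{f}^{-1}\vv{w}_{f},
\]
where $\vv{w}_{f} = \mm{F}_{k^{+},B^{+}}^{T}\VV{v}_{B^{+}} + \mm{F}_{k^{-},B^{-}}^{T}\VV{v}_{B^{-}}$ and $\MM{D}_{f} = \mm{H}(\mm{\tau}_{k^{+},B^{+}} + \mm{\tau}_{k^{-},B^{-}})$ is diagonal. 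Because $\mm{H}$ and $\mm{\tau}_{k^{\pm}, B^{\pm}}$ are all diagonal with positive entries, the sharp pointwise inequality $(a^{+}+a^{-})^{2}/(\tau^{+}+\tau^{-}) \le (a^{+})^{2}/\tau^{+} + (a^{-})^{2}/\tau^{-}$ applies entrywise on the face grid. Summing over $f$ and regrouping by block reduces the claim to the per-block estimate
\[
\VV{v}_{B}^{T}\Bigl[\MM{M}_{B} - \sum_{k \,:\, (k,B)\in \FF_{I}} \mm{F}_{k,B}(\mm{H}\mm{\tau}_{k,B})^{-1}\mm{F}_{k,B}^{T}\Bigr]\VV{v}_{B} > 0.
\]

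The key algebraic simplification is the identity
\[
\mm{F}_{k}(\mm{H}\mm{\tau}_{k})^{-1}\mm{F}_{k}^{T} = \MM{C}_{k} + \mm{G}_{k}^{T}(\mm{H}\mm{\tau}_{k})^{-1}\mm{G}_{k},
\]
which follows by expanding \eref{eqn:mmFk} and comparing with the definition \eref{eqn:MMC} of $\MM{C}_{k}$. Substituting this into the per-block bracket and accounting for the Neumann correction \eref{eqn:M:Neumann} collapses all of the $\MM{C}_{k}$ terms on non-Dirichlet faces, leaving an expression of the form $\MM{A} + \sum_{k\in\FF_{D}\cap \partial B}\MM{C}_{k} - \sum_{k\notin\FF_{D}\cap \partial B} \mm{G}_{k}^{T}(\mm{H}\mm{\tau}_{k})^{-1}\mm{G}_{k}$.

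The main obstacle is showing that this reduced per-block expression is positive. This is a refinement of the local positive-definiteness proved in Theorem~\ref{thm:loc:PD}, and it should yield to the same borrowing-lemma machinery used in \aref{sec:app:loc:PD}: for sufficiently large $\mm{\tau}_{k}$ the estimates of \citet{VM14} absorb the $\mm{G}_{k}^{T}(\mm{H}\mm{\tau}_{k})^{-1}\mm{G}_{k}$ terms into a coercive piece of $\MM{A}$, and on blocks touching $\partial\Omega_{D}$ the extra $\MM{C}_{k}$ contribution controls the constant mode and gives strict positivity. On blocks with no Dirichlet face the reduced expression is only positive semidefinite, with the constant mode in its kernel; strictness of the global bound must then be recovered by noting that the pointwise Cauchy-Schwarz step above is tight only when the per-block constants on either side of each interior interface coincide, so any Dirichlet constraint propagates through the interface graph and forces $\vv{v}=0$ globally. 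Making the borrowing-lemma bounds explicit and carrying out this global propagation argument cleanly is the most delicate piece of the proof.
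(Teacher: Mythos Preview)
Your approach is correct and essentially the same as the paper's: your pointwise Cauchy--Schwarz inequality $(a^{+}+a^{-})^{2}/(\tau^{+}+\tau^{-}) \le (a^{+})^{2}/\tau^{+} + (a^{-})^{2}/\tau^{-}$ is exactly the statement that the paper's $2\times 2$ matrix $\mm{\mathbb{T}}^{j}$ is positive semidefinite, and your reduced per-block expression is precisely the paper's $\mm{\mathbb{A}}^{\pm}$, whose semidefiniteness is inherited from the proof of Theorem~\ref{thm:loc:PD}. The propagation argument you outline for strict positivity is also what the paper invokes (more tersely) at the end of its proof.
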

\begin{proof}
  See \aref{sec:app:coupled:PD}
\end{proof}

The following corollary characterizes the global system and the Schur complement
of the $\mm{\bar{M}}$ block of the global system.
\begin{corollary}\label{cor:coupled:PD}
  The global system matrix
  \begin{align}
    \begin{bmatrix}
      \mm{\bar{M}} & \mm{\bar{F}}\\
      \mm{\bar{F}}^{T} & \mm{\bar{D}}
    \end{bmatrix}
  \end{align}
  and the Schur complement of $\mm{\bar{M}}$ block,
  $\mm{\bar{D}} - \mm{\bar{F}}^{T} \mm{\bar{M}}^{-1} \mm{\bar{F}}$, are
  symmetric positive definite.
\end{corollary}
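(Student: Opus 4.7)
The plan is to reduce Corollary~\ref{cor:coupled:PD} to Theorem~\ref{thm:coupled:PD} using the standard equivalence between positive definiteness of a symmetric block matrix and positive definiteness of a Schur complement. Concretely, for any symmetric block matrix
\begin{equation*}
  \mm{K} =
  \begin{bmatrix}
    \mm{A} & \mm{B}\\
    \mm{B}^{T} & \mm{C}
  \end{bmatrix},
\end{equation*}
with $\mm{A}$ and $\mm{C}$ symmetric, $\mm{K}$ is SPD if and only if $\mm{C}$ is SPD and the Schur complement $\mm{A} - \mm{B}\mm{C}^{-1}\mm{B}^{T}$ is SPD; equivalently, if and only if $\mm{A}$ is SPD and $\mm{C} - \mm{B}^{T}\mm{A}^{-1}\mm{B}$ is SPD. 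This is just the block $\mm{L}\mm{D}\mm{L}^{T}$ factorization of $\mm{K}$.

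First I would verify the hypotheses of this equivalence applied to the global system matrix. The matrix $\mm{\bar{D}}$ is block diagonal with blocks $\MM{D}_{f} = \mm{H}(\mm{\tau}_{f,B^{+}} + \mm{\tau}_{f,B^{-}})$. Since $\mm{H}$ is diagonal with positive entries and the penalty matrices $\mm{\tau}_{f,B^{\pm}}$ are positive diagonal, each $\MM{D}_{f}$ is diagonal with strictly positive entries, so $\mm{\bar{D}}$ is SPD. The matrix $\mm{\bar{M}}$ is block diagonal with blocks $\MM{M}_{B}$, each of which is SPD by Theorem~\ref{thm:loc:PD} under the stated penalty assumption. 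Finally, Theorem~\ref{thm:coupled:PD} supplies that $\mm{\bar{M}} - \mm{\bar{F}}\mm{\bar{D}}^{-1}\mm{\bar{F}}^{T}$ is SPD.

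Applying the first direction of the Schur equivalence with $\mm{A}=\mm{\bar{M}}$, $\mm{B}=\mm{\bar{F}}$, $\mm{C}=\mm{\bar{D}}$: since $\mm{\bar{D}}$ is SPD and the Schur complement of $\mm{\bar{D}}$ is SPD by Theorem~\ref{thm:coupled:PD}, the full block matrix is SPD. This settles the first claim. For the second claim, I would apply the \emph{other} direction of the same equivalence to the same $\mm{K}$: since $\mm{K}$ is now known to be SPD and $\mm{\bar{M}}$ is SPD, the remaining Schur complement $\mm{\bar{D}} - \mm{\bar{F}}^{T}\mm{\bar{M}}^{-1}\mm{\bar{F}}$ must also be SPD.

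There is essentially no obstacle here: the work has already been done in Theorem~\ref{thm:coupled:PD}, and the corollary is a bookkeeping exercise on the block LDL factorization together with the trivial checks that $\mm{\bar{M}}$ and $\mm{\bar{D}}$ are SPD. The only thing to be careful about is that both Schur complements refer to the \emph{same} block matrix, so I would explicitly write the $\mm{L}\mm{D}\mm{L}^{T}$-style congruence $\mm{K} = \mm{P}^{T}\,\mathrm{diag}(\mm{\bar{M}},\,\mm{\bar{D}}-\mm{\bar{F}}^{T}\mm{\bar{M}}^{-1}\mm{\bar{F}})\,\mm{P}$ with $\mm{P}$ unit block triangular to make the second implication fully explicit, avoiding any appearance of circularity.
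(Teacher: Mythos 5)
Your proposal is correct and follows essentially the same route as the paper: the paper also establishes positive definiteness of the global matrix via the block congruence involving $\mm{\bar{D}}$ and its Schur complement $\mm{\bar{M}} - \mm{\bar{F}}\mm{\bar{D}}^{-1}\mm{\bar{F}}^{T}$ (using Theorem~\ref{thm:coupled:PD}), and then reads off the positive definiteness of $\mm{\bar{D}} - \mm{\bar{F}}^{T}\mm{\bar{M}}^{-1}\mm{\bar{F}}$ from the second congruence with $\mm{\bar{M}}$ pivoted out. No gaps.
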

\begin{proof}
  See \aref{sec:app:coupled:PD}
\end{proof}
\section{Numerical Results}\label{sec:numerical}
We now confirm the theoretical results concerning the positive definiteness of
the system, the bounds on the penalty parameters, and numerically investigate
accuracy of the hybridized technique. All of the solves in this section are
done using direct solves. The Julia
\citep{bezanson2017julia}\footnote{Simulations run with Julia 1.5.3} codes used
to generate the numerical results are available at
\url{https://doi.org/10.5281/zenodo.4716826}.

\subsection{Positive Definiteness of the Local and Global Problems}
We begin by confirming that the local problem with both Dirichlet and Neumann
boundary conditions is symmetric positive definite. To do this, we consider a
single block, and assign a pseudo-random generated symmetric positive definite
coefficient matrix $\mm{c}$ at each grid point. The blocks are taken to use
grids of size $N \times N = (3p+2) \times (3p+2)$ where $2p$ is the interior
order of the SBP operator.

\begin{figure}
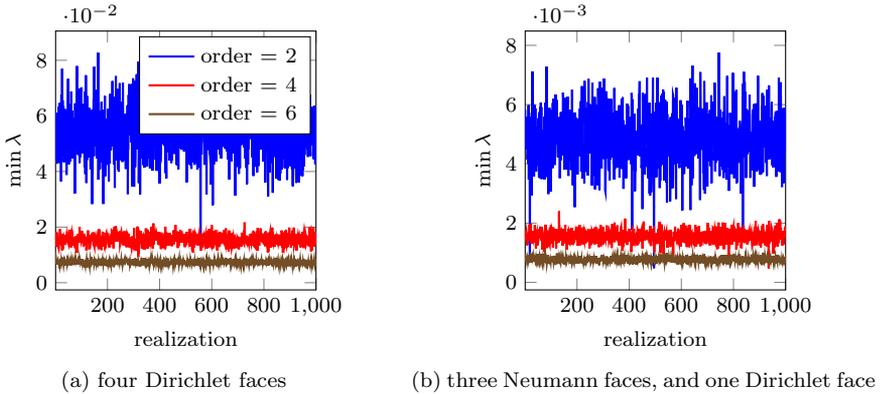

  \begin{subfigure}[t]{0.5\textwidth}
    \centering

     \caption{three Neumann faces, and one Dirichlet face}
  \end{subfigure}
  \caption{Plot of the minimum eigenvalue of the local operator for $1000$
  psuedo-randomly assigned sets of coefficient matrix values for SBP operators
  with interior orders $2$ (blue line), $4$ (red line), and $6$ (brown
  line).\label{fig:min:eig:local}}
\end{figure}
To confirm that the operator is positive definite we compute $1000$
realizations of the pseudo-random coefficients and numerically compute the
minimum eigenvalue with the penalty parameter defined by the equality version of
\eref{eqn:tau}. Two sets of boundary conditions are considered: (1) when all
four faces of the block are Dirichlet and (2) when three faces are Neumann and
one face is Dirichlet.  The result of these calculations are shown in
Figure~\ref{fig:min:eig:local}. From this we see that the system is positive
definite. One thing of note is that the local system with Neumann boundary
conditions has a minimum eigenvalue which is an order of magnitude lower than
the purely Dirichlet case. Though not shown, when all four boundaries are
Neumann the minimum computed eigenvalue is $\sim 10^{-16}$--$10^{-14}$. This
conforms with the theory since in this case the system should be singular. An
important implication of Figure~\ref{fig:min:eig:local} is that the bound on the
penalty parameter given in \eref{eqn:tau} is not tight for all cases.

\begin{figure}
  \begin{subfigure}[t]{0.5\textwidth}
    \centering
\begin{tikzpicture}[]
\begin{semilogxaxis}[legend pos = {south east}, ylabel = {$\min \lambda$}, xmin = {0.01}, xmax = {10.0}, xlabel = {$\tau_{s}$}, width=5cm, height=5cm]\addplot+ [no marks, thick]coordinates {
(0.01, -1.9052385301206902)
(0.010974987654930561, -1.8870356782692823)
(0.012045035402587823, -1.867622669344128)
(0.013219411484660288, -1.8469752267581672)
(0.014508287784959394, -1.8250760070718983)
(0.015922827933410922, -1.8019138645984876)
(0.01747528400007684, -1.777482227325204)
(0.019179102616724886, -1.7517765428043035)
(0.02104904144512021, -1.7247908971011219)
(0.02310129700083159, -1.6965140646473507)
(0.025353644939701114, -1.6669253604968974)
(0.027825594022071243, -1.635990688594844)
(0.030538555088334154, -1.6036590890980746)
(0.033516026509388425, -1.5698599121733687)
(0.03678379771828634, -1.534500555335412)
(0.040370172585965536, -1.4974645761953809)
(0.044306214575838804, -1.4586099810202506)
(0.04862601580065353, -1.4177675855414997)
(0.0533669923120631, -1.3747394946325875)
(0.05857020818056667, -1.3292978831137483)
(0.06428073117284322, -1.2811843332502797)
(0.07054802310718641, -1.230109989989349)
(0.0774263682681127, -1.175756768101321)
(0.08497534359086442, -1.1177798451813172)
(0.093260334688322, -1.05581176645598)
(0.10235310218990261, -0.9894687467400681)
(0.11233240329780274, -0.9183603221349256)
(0.12328467394420663, -0.8421048402728246)
(0.1353047774579807, -0.7603580463044448)
(0.1484968262254465, -0.6729390313311339)
(0.16297508346206444, -0.6007049619805105)
(0.17886495290574345, -0.53196823572371)
(0.19630406500402708, -0.4574604362588097)
(0.2154434690031884, -0.3767012487439427)
(0.23644894126454072, -0.2950426275758613)
(0.25950242113997357, -0.21221495045268676)
(0.2848035868435802, -0.12143271124379264)
(0.3125715849688236, -0.021932364038521328)
(0.34304692863149183, 0.08711685058906514)
(0.37649358067924676, 0.1248328190114053)
(0.4132012400115337, 0.12535940061706222)
(0.4534878508128582, 0.12568629516008523)
(0.4977023564332111, 0.12592068183164962)
(0.5462277217684341, 0.12610292028540862)
(0.599484250318941, 0.12625242072606016)
(0.657933224657568, 0.12637989325490445)
(0.7220809018385465, 0.1264918082113814)
(0.7924828983539174, 0.12659233595262717)
(0.8697490026177833, 0.12668429653853241)
(0.9545484566618341, 0.1267696664963141)
(1.0476157527896648, 0.12684986779678784)
(1.1497569953977358, 0.12692594172173183)
(1.2618568830660204, 0.12699865833095583)
(1.384886371393873, 0.12706858821866626)
(1.5199110829529336, 0.12713615135632228)
(1.6681005372000586, 0.1272016515854411)
(1.8307382802953682, 0.12726530189084523)
(2.0092330025650473, 0.12732724361257255)
(2.2051307399030455, 0.12738756157723227)
(2.420128264794382, 0.12744629640455163)
(2.6560877829466865, 0.12750345478430036)
(2.9150530628251765, 0.12755901822447369)
(3.1992671377973836, 0.1276129505788102)
(3.511191734215131, 0.12766520453840435)
(3.8535285937105295, 0.12771572719726967)
(4.229242874389499, 0.12776446475724365)
(4.641588833612778, 0.12781136641603758)
(5.094138014816378, 0.12785638747690314)
(5.590810182512223, 0.1278994917213422)
(6.1359072734131725, 0.12794065309390085)
(6.7341506577508214, 0.1279798567597949)
(7.390722033525779, 0.12801709960386098)
(8.11130830789687, 0.12805239024784673)
(8.902150854450387, 0.12808574866714414)
(9.770099572992255, 0.12811720548792668)
(10.722672220103231, 0.128146801044239)
(11.768119524349984, 0.12817458426878872)
(12.91549665014884, 0.12820061148375633)
(14.174741629268055, 0.12822494514917812)
(15.556761439304715, 0.12824765261638926)
(17.073526474706906, 0.12826880492600237)
(18.73817422860384, 0.12828847567809235)
(20.565123083486515, 0.12830673999524825)
(22.5701971963392, 0.12832367359256278)
(24.770763559917114, 0.12833935196050947)
(27.1858824273294, 0.12835384966164404)
(29.836472402833387, 0.12836723973895536)
(32.74549162877728, 0.12837959323136636)
(35.938136638046274, 0.1283909787877024)
(39.44206059437656, 0.12840146237015135)
(43.287612810830595, 0.12841110703745137)
(47.50810162102796, 0.12841997279930004)
(52.14008287999685, 0.12842811653026595)
(57.22367659350217, 0.12843559193634477)
(62.80291441834253, 0.12844244956333106)
(68.92612104349699, 0.12844873684012487)
(75.64633275546291, 0.1284544981527838)
(83.02175681319744, 0.12845977493618388)
(91.11627561154891, 0.1284646057857808)
(100.0, 0.12846902658090054)
};
\addlegendentry{order = 2}
\addplot+ [no marks, thick]coordinates {
(0.01, -2.8623812966911175)
(0.010974987654930561, -2.8189163001340374)
(0.012045035402587823, -2.7714231702022087)
(0.013219411484660288, -2.7195399999667824)
(0.014508287784959394, -2.6628742995028065)
(0.015922827933410922, -2.6010009817299604)
(0.01747528400007684, -2.533460436501615)
(0.019179102616724886, -2.4597567923106025)
(0.02104904144512021, -2.384713898559432)
(0.02310129700083159, -2.319881146031947)
(0.025353644939701114, -2.2501396932052673)
(0.027825594022071243, -2.17498308531174)
(0.030538555088334154, -2.093886432769337)
(0.033516026509388425, -2.00630929118263)
(0.03678379771828634, -1.9117004538662157)
(0.040370172585965536, -1.809505555677217)
(0.044306214575838804, -1.6991786634141135)
(0.04862601580065353, -1.5801994286424517)
(0.0533669923120631, -1.4520979417604571)
(0.05857020818056667, -1.3144902005686545)
(0.06428073117284322, -1.167128158352864)
(0.07054802310718641, -1.0099700095382314)
(0.0774263682681127, -0.9364509745877636)
(0.08497534359086442, -0.8782991513798312)
(0.093260334688322, -0.8148728322205027)
(0.10235310218990261, -0.7457607436267625)
(0.11233240329780274, -0.6705489591875508)
(0.12328467394420663, -0.5888355643863238)
(0.1353047774579807, -0.5002541178581275)
(0.1484968262254465, -0.4045108585316589)
(0.16297508346206444, -0.3014438125876804)
(0.17886495290574345, -0.1911178694493456)
(0.19630406500402708, -0.0739839189866485)
(0.2154434690031884, 0.04279553684539433)
(0.23644894126454072, 0.042986744954772504)
(0.25950242113997357, 0.04303748090978365)
(0.2848035868435802, 0.043078365661606395)
(0.3125715849688236, 0.043113076478685265)
(0.34304692863149183, 0.04314318307745829)
(0.37649358067924676, 0.04316962956785509)
(0.4132012400115337, 0.043193064745851856)
(0.4534878508128582, 0.043213966524198184)
(0.4977023564332111, 0.04323270245630448)
(0.5462277217684341, 0.04324956384691332)
(0.599484250318941, 0.04326478685427198)
(0.657933224657568, 0.043278566399443524)
(0.7220809018385465, 0.04329106576448649)
(0.7924828983539174, 0.043302423449697806)
(0.8697490026177833, 0.04331275820815333)
(0.9545484566618341, 0.04332217282402592)
(1.0476157527896648, 0.043330757000092146)
(1.1497569953977358, 0.043338589598067856)
(1.2618568830660204, 0.0433457403994917)
(1.384886371393873, 0.04335227150526029)
(1.5199110829529336, 0.04335823845947791)
(1.6681005372000586, 0.04336369116001905)
(1.8307382802953682, 0.04336867460295742)
(2.0092330025650473, 0.04337322949620381)
(2.2051307399030455, 0.043377392769809814)
(2.420128264794382, 0.043381198003842845)
(2.6560877829466865, 0.043384675790272574)
(2.9150530628251765, 0.04338785404187028)
(3.1992671377973836, 0.04339075825797573)
(3.511191734215131, 0.04339341175532603)
(3.8535285937105295, 0.04339583587002403)
(4.229242874389499, 0.043398050135877726)
(4.641588833612778, 0.04340007244279371)
(5.094138014816378, 0.04340191917844549)
(5.590810182512223, 0.04340360535595209)
(6.1359072734131725, 0.043405144729040716)
(6.7341506577508214, 0.04340654989688004)
(7.390722033525779, 0.04340783239959612)
(8.11130830789687, 0.04340900280535196)
(8.902150854450387, 0.04341007079036578)
(9.770099572992255, 0.043411045212313114)
(10.722672220103231, 0.04341193417756152)
(11.768119524349984, 0.04341274510302647)
(12.91549665014884, 0.043413484773391284)
(14.174741629268055, 0.043414159393206815)
(15.556761439304715, 0.04341477463541776)
(17.073526474706906, 0.043415335685498775)
(18.73817422860384, 0.04341584728239661)
(20.565123083486515, 0.04341631375597186)
(22.5701971963392, 0.04341673906164748)
(24.770763559917114, 0.043417126811950785)
(27.1858824273294, 0.04341748030570722)
(29.836472402833387, 0.043417802554780026)
(32.74549162877728, 0.04341809630865288)
(35.938136638046274, 0.04341836407669732)
(39.44206059437656, 0.04341860814908654)
(43.287612810830595, 0.043418830615562405)
(47.50810162102796, 0.0434190333826677)
(52.14008287999685, 0.043419218189825665)
(57.22367659350217, 0.04341938662352842)
(62.80291441834253, 0.043419540130997775)
(68.92612104349699, 0.04341968003203127)
(75.64633275546291, 0.04341980753019589)
(83.02175681319744, 0.043419923723026284)
(91.11627561154891, 0.043420029611313055)
(100.0, 0.043420126107546454)
};
\addlegendentry{order = 4}
\addplot+ [no marks, thick]coordinates {
(0.01, -4.029778203128789)
(0.010974987654930561, -3.991728082789739)
(0.012045035402587823, -3.9503857119640227)
(0.013219411484660288, -3.9054303907538355)
(0.014508287784959394, -3.856513305108747)
(0.015922827933410922, -3.8032558710569893)
(0.01747528400007684, -3.7452481049316138)
(0.019179102616724886, -3.682047079970385)
(0.02104904144512021, -3.613175554316311)
(0.02310129700083159, -3.538120888079984)
(0.025353644939701114, -3.4563344107367833)
(0.027825594022071243, -3.367231458915912)
(0.030538555088334154, -3.270192384540014)
(0.033516026509388425, -3.164564942800627)
(0.03678379771828634, -3.0496686209713646)
(0.040370172585965536, -2.9248016808144754)
(0.044306214575838804, -2.7892519868376935)
(0.04862601580065353, -2.642313122415657)
(0.0533669923120631, -2.4833079235300666)
(0.05857020818056667, -2.311622497634998)
(0.06428073117284322, -2.126755236501278)
(0.07054802310718641, -1.928387627914887)
(0.0774263682681127, -1.716487489027934)
(0.08497534359086442, -1.4914618948490554)
(0.093260334688322, -1.2543891116308474)
(0.10235310218990261, -1.0073805759021077)
(0.11233240329780274, -0.7541576292161327)
(0.12328467394420663, -0.5009381846525852)
(0.1353047774579807, -0.25746638349990236)
(0.1484968262254465, -0.03680730382444479)
(0.16297508346206444, 0.022732859367688128)
(0.17886495290574345, 0.022754953797570924)
(0.19630406500402708, 0.02276936578761845)
(0.2154434690031884, 0.022780356365888266)
(0.23644894126454072, 0.022789230974410567)
(0.25950242113997357, 0.022796626425638553)
(0.2848035868435802, 0.02280291503848919)
(0.3125715849688236, 0.022808338217658923)
(0.34304692863149183, 0.022813063612308344)
(0.37649358067924676, 0.022817213539973534)
(0.4132012400115337, 0.022820880629868773)
(0.4534878508128582, 0.022824137096718503)
(0.4977023564332111, 0.022827040562923225)
(0.5462277217684341, 0.022829637885816656)
(0.599484250318941, 0.0228319677698702)
(0.657933224657568, 0.022834062607271555)
(0.7220809018385465, 0.022835949809723716)
(0.7924828983539174, 0.022837652795321087)
(0.8697490026177833, 0.022839191734188503)
(0.9545484566618341, 0.022840584122289877)
(1.0476157527896648, 0.02284184522983421)
(1.1497569953977358, 0.02284298845657018)
(1.2618568830660204, 0.02284402561684671)
(1.384886371393873, 0.02284496717079011)
(1.5199110829529336, 0.022845822413819505)
(1.6681005372000586, 0.02284659963323838)
(1.8307382802953682, 0.022847306238880535)
(2.0092330025650473, 0.02284794887281231)
(2.2051307399030455, 0.022848533502237638)
(2.420128264794382, 0.022849065498532266)
(2.6560877829466865, 0.022849549705108572)
(2.9150530628251765, 0.022849990495858844)
(3.1992671377973836, 0.02285039182580078)
(3.511191734215131, 0.022850757275322993)
(3.8535285937105295, 0.022851090088830628)
(4.229242874389499, 0.02285139320873462)
(4.641588833612778, 0.022851669305591942)
(5.094138014816378, 0.022851920804617912)
(5.590810182512223, 0.02285214990955273)
(6.1359072734131725, 0.022852358623659114)
(6.7341506577508214, 0.022852548768709735)
(7.390722033525779, 0.022852722002058967)
(8.11130830789687, 0.022852879831707223)
(8.902150854450387, 0.022853023630295184)
(9.770099572992255, 0.022853154647320257)
(10.722672220103231, 0.0228532740205106)
(11.768119524349984, 0.022853382785831707)
(12.91549665014884, 0.02285348188681088)
(14.174741629268055, 0.022853572182777677)
(15.556761439304715, 0.02285365445651862)
(17.073526474706906, 0.02285372942106512)
(18.73817422860384, 0.022853797726001216)
(20.565123083486515, 0.02285385996307528)
(22.5701971963392, 0.022853916671463078)
(24.770763559917114, 0.022853968342334934)
(27.1858824273294, 0.022854015423251682)
(29.836472402833387, 0.022854058321934816)
(32.74549162877728, 0.022854097409954286)
(35.938136638046274, 0.0228541330257827)
(39.44206059437656, 0.02285416547787291)
(43.287612810830595, 0.022854195047234932)
(47.50810162102796, 0.022854221989946834)
(52.14008287999685, 0.02285424653931831)
(57.22367659350217, 0.022854268907958133)
(62.80291441834253, 0.02285428928957141)
(68.92612104349699, 0.022854307860648758)
(75.64633275546291, 0.022854324782031255)
(83.02175681319744, 0.02285434020026698)
(91.11627561154891, 0.022854354248849393)
(100.0, 0.02285436704946132)
};
\addlegendentry{order = 6}
\end{semilogxaxis}

\end{tikzpicture}
     \caption{Minimum eigenvalues for increasing
    $\tau_s$\label{fig:tau:scaling:min}}
  \end{subfigure}
  \begin{subfigure}[t]{0.5\textwidth}
    \centering
\begin{tikzpicture}[]
\begin{loglogaxis}[legend pos = {north west}, ylabel = {$\max \lambda$}, xmin = {0.01}, xmax = {10.0}, xlabel = {$\tau_{s}$}, width=5cm, height=5cm]\addplot+ [no marks, thick]coordinates {
(0.01, 4.493913292360292)
(0.010974987654930561, 4.494235419336076)
(0.012045035402587823, 4.738023184124386)
(0.013219411484660288, 5.174568784003421)
(0.014508287784959394, 5.694897568544699)
(0.015922827933410922, 6.374991649823292)
(0.01747528400007684, 7.130879221273564)
(0.019179102616724886, 7.964660793254656)
(0.02104904144512021, 8.882804149346534)
(0.02310129700083159, 9.89288560817841)
(0.025353644939701114, 11.003419200533504)
(0.027825594022071243, 12.223863958371846)
(0.030538555088334154, 13.564676854847606)
(0.033516026509388425, 15.037387837508614)
(0.03678379771828634, 16.654690362519474)
(0.040370172585965536, 18.43054533060277)
(0.044306214575838804, 20.380297958172072)
(0.04862601580065353, 22.520807867517473)
(0.0533669923120631, 24.870593101397077)
(0.05857020818056667, 27.44998905070083)
(0.06428073117284322, 30.281323505736648)
(0.07054802310718641, 33.38910923715228)
(0.0774263682681127, 36.80025569940358)
(0.08497534359086442, 40.54430163808445)
(0.093260334688322, 44.653670578698055)
(0.10235310218990261, 49.16395138302186)
(0.11233240329780274, 54.1142062835598)
(0.12328467394420663, 59.54730904968572)
(0.1353047774579807, 65.51031620373152)
(0.1484968262254465, 72.05487449419337)
(0.16297508346206444, 79.23766814922767)
(0.17886495290574345, 87.12090977960139)
(0.19630406500402708, 95.77287917943846)
(0.2154434690031884, 105.26851468875817)
(0.23644894126454072, 115.6900622377224)
(0.25950242113997357, 127.1277876925582)
(0.2848035868435802, 139.6807586718135)
(0.3125715849688236, 153.45770260358006)
(0.34304692863149183, 168.57794845490733)
(0.37649358067924676, 185.17246028955293)
(0.4132012400115337, 203.38497160568383)
(0.4534878508128582, 223.3732302782)
(0.4977023564332111, 245.31036488842082)
(0.5462277217684341, 269.38638427533283)
(0.599484250318941, 295.80982329658235)
(0.657933224657568, 324.8095490538034)
(0.7220809018385465, 356.6367432268054)
(0.7924828983539174, 391.5670776865192)
(0.8697490026177833, 429.90310223071054)
(0.9545484566618341, 471.9768651238331)
(1.0476157527896648, 518.1527891387846)
(1.1497569953977358, 568.8308280114197)
(1.2618568830660204, 624.4499306474883)
(1.384886371393873, 685.4918430872397)
(1.5199110829529336, 752.4852811584909)
(1.6681005372000586, 826.0105099596154)
(1.8307382802953682, 906.7043698377852)
(2.0092330025650473, 995.2657923950039)
(2.2051307399030455, 1092.4618542989656)
(2.420128264794382, 1199.1344213339182)
(2.6560877829466865, 1316.207440239141)
(2.9150530628251765, 1444.6949414933952)
(3.1992671377973836, 1585.709822361692)
(3.511191734215131, 1740.47348627886)
(3.8535285937105295, 1910.3264220617061)
(4.229242874389499, 2096.739814581771)
(4.641588833612778, 2301.3282874648116)
(5.094138014816378, 2525.8638881881743)
(5.590810182512223, 2772.2914367082617)
(6.1359072734131725, 3042.745370560557)
(6.7341506577508214, 3339.56823233645)
(7.390722033525779, 3665.330959666461)
(8.11130830789687, 4022.8551534520693)
(8.902150854450387, 4415.237517222881)
(9.770099572992255, 4845.876679301142)
(10.722672220103231, 5318.5026300943255)
(11.768119524349984, 5837.2090294877535)
(12.91549665014884, 6406.488664168055)
(14.174741629268055, 7031.272361992131)
(15.556761439304715, 7716.971700459193)
(17.073526474706906, 8469.525879206229)
(18.73817422860384, 9295.453162513955)
(20.565123083486515, 10201.90733739375)
(22.5701971963392, 11196.739676268391)
(24.770763559917114, 12288.566940937886)
(27.1858824273294, 13486.84601684788)
(29.836472402833387, 14801.955824107481)
(32.74549162877728, 16245.287214729775)
(35.938136638046274, 17829.341634742916)
(39.44206059437656, 19567.839405734987)
(43.287612810830595, 21475.83856371573)
(47.50810162102796, 23569.86528462109)
(52.14008287999685, 25868.057026143095)
(57.22367659350217, 28390.31962571152)
(62.80291441834253, 31158.499715334878)
(68.92612104349699, 34196.573946675744)
(75.64633275546291, 37530.85666533741)
(83.02175681319744, 41190.227833136385)
(91.11627561154891, 45206.38317251633)
(100.0, 49614.10869973244)
};
\addlegendentry{order = 2}
\addplot+ [no marks, thick]coordinates {
(0.01, 25.79212500596308)
(0.010974987654930561, 28.422703846010897)
(0.012045035402587823, 31.31308241781908)
(0.013219411484660288, 34.488142074857414)
(0.014508287784959394, 37.975263473240716)
(0.015922827933410922, 41.80455590822327)
(0.01747528400007684, 46.00911223924591)
(0.019179102616724886, 50.62529109050448)
(0.02104904144512021, 55.69302839723864)
(0.02310129700083159, 61.25618072575127)
(0.025353644939701114, 67.36290314382762)
(0.027825594022071243, 74.06606476989127)
(0.030538555088334154, 81.42370549324579)
(0.033516026509388425, 89.49953774161642)
(0.03678379771828634, 98.36349758226832)
(0.040370172585965536, 108.09234988489062)
(0.044306214575838804, 118.7703527535223)
(0.04862601580065353, 130.48998695619946)
(0.0533669923120631, 143.352756650062)
(0.05857020818056667, 157.47006832172994)
(0.06428073117284322, 172.96419554372642)
(0.07054802310718641, 189.96933789366503)
(0.0774263682681127, 208.63278320055684)
(0.08497534359086442, 229.11618317915836)
(0.093260334688322, 251.59695349660615)
(0.10235310218990261, 276.2698103943297)
(0.11233240329780274, 303.34845717175403)
(0.12328467394420663, 333.0674351369396)
(0.1353047774579807, 365.6841550542864)
(0.1484968262254465, 401.4811266832186)
(0.16297508346206444, 440.76840571778683)
(0.17886495290574345, 483.8862793204217)
(0.19630406500402708, 531.2082135098291)
(0.2154434690031884, 583.144087931237)
(0.23644894126454072, 640.1437460264776)
(0.25950242113997357, 702.7008913533703)
(0.2848035868435802, 771.3573638020601)
(0.3125715849688236, 846.7078327465999)
(0.34304692863149183, 929.4049477813606)
(0.37649358067924676, 1020.164991655268)
(0.4132012400115337, 1119.7740843667475)
(0.4534878508128582, 1229.0949921561262)
(0.4977023564332111, 1349.0746003716308)
(0.5462277217684341, 1480.7521149352638)
(0.599484250318941, 1625.2680634457013)
(0.657933224657568, 1783.8741738813487)
(0.7220809018385465, 1957.9442164681134)
(0.7924828983539174, 2148.985902618986)
(0.8697490026177833, 2358.6539440082065)
(0.9545484566618341, 2588.764384891563)
(1.0476157527896648, 2841.3103318123826)
(1.1497569953977358, 3118.4792169365346)
(1.2618568830660204, 3422.6717445429763)
(1.384886371393873, 3756.5226847754766)
(1.5199110829529336, 4122.9236947608515)
(1.6681005372000586, 4525.0483647594865)
(1.8307382802953682, 4966.379706285756)
(2.0092330025650473, 5450.740320287367)
(2.2051307399030455, 5982.325506685973)
(2.420128264794382, 6565.739602057956)
(2.6560877829466865, 7206.0358601951775)
(2.9150530628251765, 7908.760220971686)
(3.1992671377973836, 8679.999346621884)
(3.511191734215131, 9526.433341496773)
(3.8535285937105295, 10455.393611932286)
(4.229242874389499, 11474.926367383483)
(4.641588833612778, 12593.862312841675)
(5.094138014816378, 13821.893136176499)
(5.590810182512223, 15169.65545289977)
(6.1359072734131725, 16648.822935440603)
(6.7341506577508214, 18272.207424911398)
(7.390722033525779, 20053.869901146427)
(8.11130830789687, 22009.242272182488)
(8.902150854450387, 24155.261038063945)
(9.770099572992255, 26510.51398670367)
(10.722672220103231, 29095.40119240977)
(11.768119524349984, 31932.31171156994)
(12.91549665014884, 35045.81750594732)
(14.174741629268055, 38462.88627325836)
(15.556761439304715, 42213.11502846891)
(17.073526474706906, 46328.986458978856)
(18.73817422860384, 50846.15027412169)
(20.565123083486515, 55803.73198589189)
(22.5701971963392, 61244.671795411625)
(24.770763559917114, 67216.09652041201)
(27.1858824273294, 73769.72778518243)
(29.836472402833387, 80962.33000853712)
(32.74549162877728, 88856.20207005512)
(35.938136638046274, 97519.71691317279)
(39.44206059437656, 107027.91375890946)
(43.287612810830595, 117463.14805970248)
(47.50810162102796, 128915.80482293874)
(52.14008287999685, 141485.0814826485)
(57.22367659350217, 155279.84710022312)
(62.80291441834253, 170419.58533614426)
(68.92612104349699, 187035.42936029023)
(75.64633275546291, 205271.29766472007)
(83.02175681319744, 225285.14061680305)
(91.11627561154891, 247250.30854973898)
(100.0, 271357.05324021506)
};
\addlegendentry{order = 4}
\addplot+ [no marks, thick]coordinates {
(0.01, 229.3328007496142)
(0.010974987654930561, 251.75517100088086)
(0.012045035402587823, 276.36394548995315)
(0.013219411484660288, 303.37227254423397)
(0.014508287784959394, 333.0140844641553)
(0.015922827933410922, 365.5461236971589)
(0.01747528400007684, 401.2501665889083)
(0.019179102616724886, 440.435463971526)
(0.02104904144512021, 483.4414197270271)
(0.02310129700083159, 530.6405305255756)
(0.025353644939701114, 582.4416122005103)
(0.027825594022071243, 639.2933407048846)
(0.030538555088334154, 701.6881383192458)
(0.033516026509388425, 770.166438770693)
(0.03678379771828634, 845.3213682053885)
(0.040370172585965536, 927.8038825586568)
(0.044306214575838804, 1018.3284058198974)
(0.04862601580065353, 1117.6790180280982)
(0.0533669923120631, 1226.716246595337)
(0.05857020818056667, 1346.3845197813264)
(0.06428073117284322, 1477.720346877312)
(0.07054802310718641, 1621.861295952191)
(0.0774263682681127, 1780.0558469216708)
(0.08497534359086442, 1953.6742052829645)
(0.093260334688322, 2144.2201701784616)
(0.10235310218990261, 2353.3441595837735)
(0.11233240329780274, 2582.857505438122)
(0.12328467394420663, 2834.7481425345486)
(0.1353047774579807, 3111.197827059738)
(0.1484968262254465, 3414.6010339219233)
(0.16297508346206444, 3747.585696546883)
(0.17886495290574345, 4113.035968779817)
(0.19630406500402708, 4514.117206045994)
(0.2154434690031884, 4954.30338214502)
(0.23644894126454072, 5437.407179149842)
(0.25950242113997357, 5967.613011034829)
(0.2848035868435802, 6549.513267067663)
(0.3125715849688236, 7188.14808888822)
(0.34304692863149183, 7889.049025804172)
(0.37649358067924676, 8658.286946424892)
(0.4132012400115337, 9502.524621620913)
(0.4534878508128582, 10429.074434257749)
(0.4977023564332111, 11445.961715558402)
(0.5462277217684341, 12561.99425668264)
(0.599484250318941, 13786.83859760083)
(0.657933224657568, 15131.103754038446)
(0.7220809018385465, 16606.433107695313)
(0.7924828983539174, 18225.605255647646)
(0.8697490026177833, 20002.644692442555)
(0.9545484566618341, 21952.943283560868)
(1.0476157527896648, 24093.39358239258)
(1.1497569953977358, 26442.535145452857)
(1.2618568830660204, 29020.715113151688)
(1.384886371393873, 31850.264446989688)
(1.5199110829529336, 34955.69134966455)
(1.6681005372000586, 38363.89354339729)
(1.8307382802953682, 42104.39124513526)
(2.0092330025650473, 46209.582856548695)
(2.2051307399030455, 50715.025583487164)
(2.420128264794382, 55659.74341548585)
(2.6560877829466865, 61086.56513289486)
(2.9150530628251765, 67042.49526928492)
(3.1992671377973836, 73579.1212422323)
(3.511191734215131, 80753.06017884742)
(3.8535285937105295, 88626.44930624917)
(4.229242874389499, 97267.48415449618)
(4.641588833612778, 106751.00923364703)
(5.094138014816378, 117159.16630110046)
(5.590810182512223, 128582.10583420104)
(6.1359072734131725, 141118.7678705477)
(6.7341506577508214, 154877.73897926693)
(7.390722033525779, 169978.19278592928)
(8.11130830789687, 186550.92219747824)
(8.902150854450387, 204739.4722678261)
(9.770099572992255, 224701.38351643435)
(10.722672220103231, 246609.556468932)
(11.768119524349984, 270653.749238749)
(12.91549665014884, 297042.2211211226)
(14.174741629268055, 326003.5364354786)
(15.556761439304715, 357788.54424022336)
(17.073526474706906, 392672.5510672846)
(18.73817422860384, 430957.70549559075)
(20.565123083486515, 472975.6152175048)
(22.5701971963392, 519090.21926603175)
(24.770763559917114, 569700.9402806067)
(27.1858824273294, 625246.1441149232)
(29.836472402833387, 686206.9367522419)
(32.74549162877728, 753111.3314152629)
(35.938136638046274, 826538.8219640672)
(39.44206059437656, 907125.402194706)
(43.287612810830595, 995569.0745132104)
(47.50810162102796, 1.0926358956984943e6)
(52.14008287999685, 1.199166612119739e6)
(57.22367659350217, 1.3160839418791938e6)
(62.80291441834253, 1.444400566954988e6)
(68.92612104349699, 1.5852279045679492e6)
(75.64633275546291, 1.7397857337458825e6)
(83.02175681319744, 1.909412760465983e6)
(91.11627561154891, 2.095578212885587e6)
(100.0, 2.2998945670935796e6)
};
\addlegendentry{order = 6}
\legend{};
\end{loglogaxis}

\end{tikzpicture}
     \caption{Maximum eigenvalues for increasing
    $\tau_s$\label{fig:tau:scaling:max}}
  \end{subfigure}
  \caption{Plot of the minimum and maximum eigenvalue for increasing $\tau_s$
  for a single psuedo-random parameter realization when all four faces are
  Dirichlet for SBP operators with interior orders $2$ (blue line), $4$ (red
  line), and $6$ (brown line).\label{fig:tau:scaling}}
\end{figure}
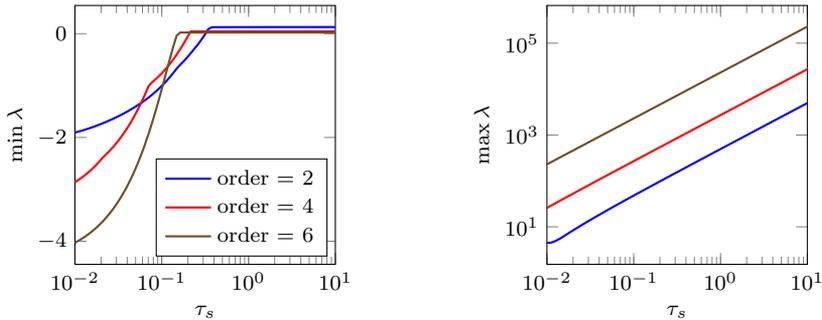
Another question to consider is how the penalty parameter affects the
spectral radius of the operator. In Figure~\ref{fig:tau:scaling} we plot the
minimum and maximum eigenvalues versus increasing $\tau_{s}$; here $\tau_s$ is a
scaling of the penalty parameter so that the actual penalty parameter at each
grid point is $\tau_s$ times the equality version
of~\eref{eqn:tau}. From Figure~\ref{fig:tau:scaling:min} it
is seen that once $\tau_s$ is large enough, the minimum eigenvalue remains
roughly constant. From Figure~\ref{fig:tau:scaling:max} we see that the maximum
eigenvalue increases linearly with $\tau_s$ in all cases, and that the slope of
the line depends on the order of the operators; note that in this figure a
log-log axis has been used so the higher the line the larger the slope.

\begin{figure}
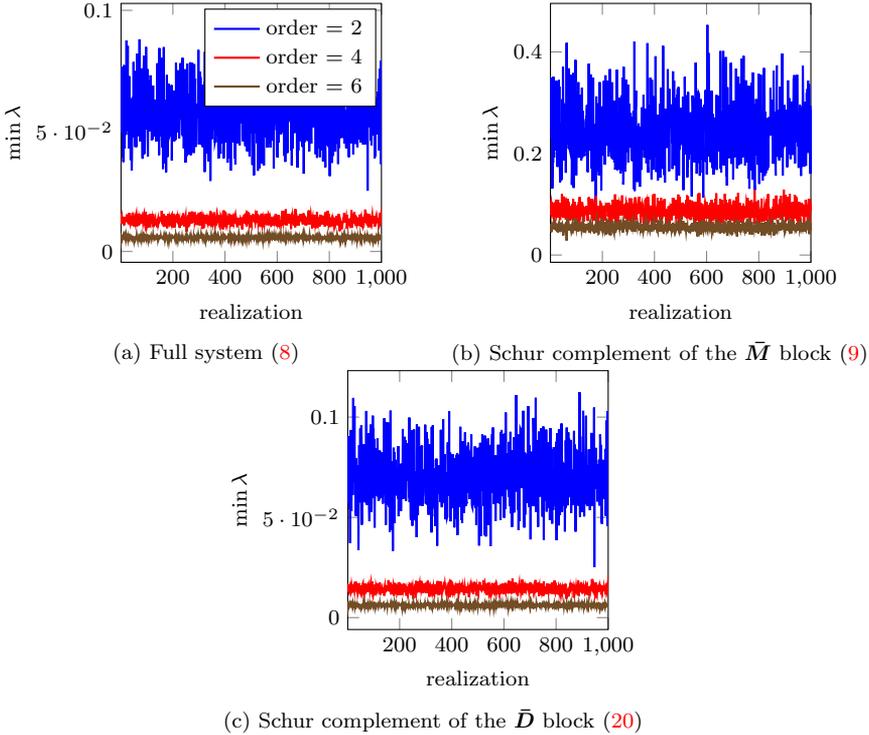

  \centering
  \begin{subfigure}[t]{0.48\textwidth}
    \centering

     \caption{Schur complement of the $\mm{\bar{D}}$
    block~\eref{eqn:schur:system:D}}
  \end{subfigure}
  \caption{Plot of the minimum eigenvalue for the full system and two Schur
  complement systems of a two block problem with psuedo-randomly assigned
  coefficient matrix values for SBP operators with interior orders $2$ (blue
  line), $4$ (red line), and $6$ (brown line).\label{fig:min:eig:coupled}}
\end{figure}
We now confirm the positive definiteness of the global problem by considering
two blocks coupled along a single locked interface with Dirichlet boundaries.
Each of the blocks has grids of size $N \times N = (3p-1) \times (3p-1)$ where $2p$
is the interior order of the SBP operator. As before, the coefficient matrix
$\mm{c}$ at each grid point is generated using pseudo-random numbers with the
penalty parameters set to the equality version of~\eref{eqn:tau}. In
Figure~\ref{fig:min:eig:coupled} the minimum eigenvalue for $1000$ realizations
of the material properties is shown.  Eigenvalues from three different systems
are shown: the full system~\eref{eqn:full:system} and the two Schur complement
systems~\eref{eqn:schur:system} and~\eref{eqn:schur:system:D}. In all cases it
is seen that the minimum eigenvalue is positive, confirming that the systems are
positive definite.

\subsection{Numerical Accuracy and Convergence}
Next we explore the accuracy of the method by applying the method of
manufactured solutions (MMS), see for example \citet{Roache}. In the MMS
technique an analytic solution is assumed, and compatible boundary and source
data derived. The domain is taken to be the square $\Omega = \{(x, y) | -2 \leq
x, y \leq 2  \}$. We partition $\Omega$ into the closed unit disk $\Omega_{1} =
\{(x, y) | x^2+y^2\le1 \}$ and $\Omega_2 = \text{cl}(\Omega \setminus
\Omega_{1})$, and define the unit circle $\Gamma_{I} = \{(x, y) | x^2+y^2=1 \}$
to be the interface between $\Omega_{1}$ and $\Omega_{2}$. The domain can be
seen in Figure~\ref{fig:square:circle}. The material properties are taken to be
$\mm{b} = \mm{I}_2$; the metric terms will cause the transformed material
properties $\mm{c}$ to be spatially variable. The right and left boundaries of
$\Omega$ are taken to the Dirichlet, the top and bottom boundaries Neumann, and
the interface $\Gamma_{I}$ will have a jump in the solution.
\begin{figure}
  \begin{center}
\begin{tikzpicture}[]
\begin{axis}[ylabel = {$y$}, xmin = {-2}, xmax = {2}, ymax = {2}, xlabel = {$x$}, width=5cm, height=5cm, ticks=none, ymin = {-2}]\addplot+ [no marks, solid, black]coordinates {
(1.0, 0.0)
(0.5372134, 0.0002884168)
};
\addplot+ [no marks, solid, black]coordinates {
(0.7071067811865476, 0.7071067811865475)
(0.3017369, 0.3000469)
};
\addplot+ [no marks, solid, black]coordinates {
(0.5372134, 0.0002884168)
(0.3017369, 0.3000469)
};
\addplot+ [no marks, solid, black]coordinates {
(6.123233995736766e-17, 1.0)
(0.00333461, 0.5383348)
};
\addplot+ [no marks, solid, black]coordinates {
(0.3017369, 0.3000469)
(0.00333461, 0.5383348)
};
\addplot+ [no marks, solid, black]coordinates {
(-0.7071067811865475, 0.7071067811865476)
(-0.3003842, 0.3066312)
};
\addplot+ [no marks, solid, black]coordinates {
(0.00333461, 0.5383348)
(-0.3003842, 0.3066312)
};
\addplot+ [no marks, solid, black]coordinates {
(-1.0, 1.2246467991473532e-16)
(-0.5360446, -0.0008435451)
};
\addplot+ [no marks, solid, black]coordinates {
(-0.3003842, 0.3066312)
(-0.5360446, -0.0008435451)
};
\addplot+ [no marks, solid, black]coordinates {
(-0.7071067811865475, -0.7071067811865476)
(-0.2957995, -0.3036193)
};
\addplot+ [no marks, solid, black]coordinates {
(-0.5360446, -0.0008435451)
(-0.2957995, -0.3036193)
};
\addplot+ [no marks, solid, black]coordinates {
(-3.8285686989269494e-16, -1.0)
(0.005527262, -0.5353951)
};
\addplot+ [no marks, solid, black]coordinates {
(-0.2957995, -0.3036193)
(0.005527262, -0.5353951)
};
\addplot+ [no marks, solid, black]coordinates {
(0.7071067811865476, -0.7071067811865475)
(0.3022778, -0.2983624)
};
\addplot+ [no marks, solid, black]coordinates {
(0.005527262, -0.5353951)
(0.3022778, -0.2983624)
};
\addplot+ [no marks, solid, black]coordinates {
(0.3022778, -0.2983624)
(0.5372134, 0.0002884168)
};
\addplot+ [no marks, solid, black]coordinates {
(0.3022778, -0.2983624)
(0.003680641, 0.0008290927)
};
\addplot+ [no marks, solid, black]coordinates {
(-0.2957995, -0.3036193)
(0.003680641, 0.0008290927)
};
\addplot+ [no marks, solid, black]coordinates {
(0.003680641, 0.0008290927)
(0.3017369, 0.3000469)
};
\addplot+ [no marks, solid, black]coordinates {
(0.003680641, 0.0008290927)
(-0.3003842, 0.3066312)
};
\addplot+ [no marks, solid, black]coordinates {
(2.0, 2.0)
(2.0, 1.5)
};
\addplot+ [no marks, solid, black]coordinates {
(1.5, 2.0)
(1.459547, 1.459424)
};
\addplot+ [no marks, solid, black]coordinates {
(2.0, 2.0)
(1.5, 2.0)
};
\addplot+ [no marks, solid, black]coordinates {
(2.0, 1.5)
(1.459547, 1.459424)
};
\addplot+ [no marks, solid, black]coordinates {
(1.0, 2.0)
(0.8982224, 1.429057)
};
\addplot+ [no marks, solid, black]coordinates {
(1.5, 2.0)
(1.0, 2.0)
};
\addplot+ [no marks, solid, black]coordinates {
(1.459547, 1.459424)
(0.8982224, 1.429057)
};
\addplot+ [no marks, solid, black]coordinates {
(0.5, 2.0)
(0.3667017, 1.509209)
};
\addplot+ [no marks, solid, black]coordinates {
(1.0, 2.0)
(0.5, 2.0)
};
\addplot+ [no marks, solid, black]coordinates {
(0.8982224, 1.429057)
(0.3667017, 1.509209)
};
\addplot+ [no marks, solid, black]coordinates {
(0.0, 2.0)
(-0.003366247, 1.671689)
};
\addplot+ [no marks, solid, black]coordinates {
(0.5, 2.0)
(0.0, 2.0)
};
\addplot+ [no marks, solid, black]coordinates {
(0.3667017, 1.509209)
(-0.003366247, 1.671689)
};
\addplot+ [no marks, solid, black]coordinates {
(-0.5, 2.0)
(-0.3693048, 1.511127)
};
\addplot+ [no marks, solid, black]coordinates {
(0.0, 2.0)
(-0.5, 2.0)
};
\addplot+ [no marks, solid, black]coordinates {
(-0.003366247, 1.671689)
(-0.3693048, 1.511127)
};
\addplot+ [no marks, solid, black]coordinates {
(-1.0, 2.0)
(-0.8962192, 1.428906)
};
\addplot+ [no marks, solid, black]coordinates {
(-0.5, 2.0)
(-1.0, 2.0)
};
\addplot+ [no marks, solid, black]coordinates {
(-0.3693048, 1.511127)
(-0.8962192, 1.428906)
};
\addplot+ [no marks, solid, black]coordinates {
(-1.5, 2.0)
(-1.460039, 1.460656)
};
\addplot+ [no marks, solid, black]coordinates {
(-1.0, 2.0)
(-1.5, 2.0)
};
\addplot+ [no marks, solid, black]coordinates {
(-0.8962192, 1.428906)
(-1.460039, 1.460656)
};
\addplot+ [no marks, solid, black]coordinates {
(-2.0, 2.0)
(-2.0, 1.5)
};
\addplot+ [no marks, solid, black]coordinates {
(-1.5, 2.0)
(-2.0, 2.0)
};
\addplot+ [no marks, solid, black]coordinates {
(-1.460039, 1.460656)
(-2.0, 1.5)
};
\addplot+ [no marks, solid, black]coordinates {
(-2.0, -2.0)
(-2.0, -1.5)
};
\addplot+ [no marks, solid, black]coordinates {
(-1.5, -2.0)
(-1.460145, -1.459283)
};
\addplot+ [no marks, solid, black]coordinates {
(-2.0, -2.0)
(-1.5, -2.0)
};
\addplot+ [no marks, solid, black]coordinates {
(-2.0, -1.5)
(-1.460145, -1.459283)
};
\addplot+ [no marks, solid, black]coordinates {
(-1.0, -2.0)
(-0.8988851, -1.428805)
};
\addplot+ [no marks, solid, black]coordinates {
(-1.5, -2.0)
(-1.0, -2.0)
};
\addplot+ [no marks, solid, black]coordinates {
(-1.460145, -1.459283)
(-0.8988851, -1.428805)
};
\addplot+ [no marks, solid, black]coordinates {
(-0.5, -2.0)
(-0.3680867, -1.508369)
};
\addplot+ [no marks, solid, black]coordinates {
(-1.0, -2.0)
(-0.5, -2.0)
};
\addplot+ [no marks, solid, black]coordinates {
(-0.8988851, -1.428805)
(-0.3680867, -1.508369)
};
\addplot+ [no marks, solid, black]coordinates {
(0.0, -2.0)
(0.000342336, -1.669245)
};
\addplot+ [no marks, solid, black]coordinates {
(-0.5, -2.0)
(0.0, -2.0)
};
\addplot+ [no marks, solid, black]coordinates {
(-0.3680867, -1.508369)
(0.000342336, -1.669245)
};
\addplot+ [no marks, solid, black]coordinates {
(0.5, -2.0)
(0.3679869, -1.50956)
};
\addplot+ [no marks, solid, black]coordinates {
(0.0, -2.0)
(0.5, -2.0)
};
\addplot+ [no marks, solid, black]coordinates {
(0.000342336, -1.669245)
(0.3679869, -1.50956)
};
\addplot+ [no marks, solid, black]coordinates {
(1.0, -2.0)
(0.8958907, -1.428344)
};
\addplot+ [no marks, solid, black]coordinates {
(0.5, -2.0)
(1.0, -2.0)
};
\addplot+ [no marks, solid, black]coordinates {
(0.3679869, -1.50956)
(0.8958907, -1.428344)
};
\addplot+ [no marks, solid, black]coordinates {
(1.5, -2.0)
(1.459981, -1.46053)
};
\addplot+ [no marks, solid, black]coordinates {
(1.0, -2.0)
(1.5, -2.0)
};
\addplot+ [no marks, solid, black]coordinates {
(0.8958907, -1.428344)
(1.459981, -1.46053)
};
\addplot+ [no marks, solid, black]coordinates {
(2.0, -2.0)
(2.0, -1.5)
};
\addplot+ [no marks, solid, black]coordinates {
(1.5, -2.0)
(2.0, -2.0)
};
\addplot+ [no marks, solid, black]coordinates {
(1.459981, -1.46053)
(2.0, -1.5)
};
\addplot+ [no marks, solid, black]coordinates {
(-2.0, 1.0)
(-1.429238, 0.8987769)
};
\addplot+ [no marks, solid, black]coordinates {
(-2.0, 1.5)
(-2.0, 1.0)
};
\addplot+ [no marks, solid, black]coordinates {
(-1.460039, 1.460656)
(-1.429238, 0.8987769)
};
\addplot+ [no marks, solid, black]coordinates {
(-2.0, 0.5)
(-1.509204, 0.3672318)
};
\addplot+ [no marks, solid, black]coordinates {
(-2.0, 1.0)
(-2.0, 0.5)
};
\addplot+ [no marks, solid, black]coordinates {
(-1.429238, 0.8987769)
(-1.509204, 0.3672318)
};
\addplot+ [no marks, solid, black]coordinates {
(-2.0, 0.0)
(-1.671448, -0.003451758)
};
\addplot+ [no marks, solid, black]coordinates {
(-2.0, 0.5)
(-2.0, 0.0)
};
\addplot+ [no marks, solid, black]coordinates {
(-1.509204, 0.3672318)
(-1.671448, -0.003451758)
};
\addplot+ [no marks, solid, black]coordinates {
(-2.0, -0.5)
(-1.510406, -0.3695582)
};
\addplot+ [no marks, solid, black]coordinates {
(-2.0, 0.0)
(-2.0, -0.5)
};
\addplot+ [no marks, solid, black]coordinates {
(-1.671448, -0.003451758)
(-1.510406, -0.3695582)
};
\addplot+ [no marks, solid, black]coordinates {
(-2.0, -1.0)
(-1.427553, -0.8944508)
};
\addplot+ [no marks, solid, black]coordinates {
(-2.0, -0.5)
(-2.0, -1.0)
};
\addplot+ [no marks, solid, black]coordinates {
(-1.510406, -0.3695582)
(-1.427553, -0.8944508)
};
\addplot+ [no marks, solid, black]coordinates {
(-2.0, -1.0)
(-2.0, -1.5)
};
\addplot+ [no marks, solid, black]coordinates {
(-1.427553, -0.8944508)
(-1.460145, -1.459283)
};
\addplot+ [no marks, solid, black]coordinates {
(2.0, -1.0)
(1.429215, -0.8987309)
};
\addplot+ [no marks, solid, black]coordinates {
(2.0, -1.5)
(2.0, -1.0)
};
\addplot+ [no marks, solid, black]coordinates {
(1.459981, -1.46053)
(1.429215, -0.8987309)
};
\addplot+ [no marks, solid, black]coordinates {
(2.0, -0.5)
(1.509184, -0.3672392)
};
\addplot+ [no marks, solid, black]coordinates {
(2.0, -1.0)
(2.0, -0.5)
};
\addplot+ [no marks, solid, black]coordinates {
(1.429215, -0.8987309)
(1.509184, -0.3672392)
};
\addplot+ [no marks, solid, black]coordinates {
(2.0, 0.0)
(1.671369, 0.003443455)
};
\addplot+ [no marks, solid, black]coordinates {
(2.0, -0.5)
(2.0, 0.0)
};
\addplot+ [no marks, solid, black]coordinates {
(1.509184, -0.3672392)
(1.671369, 0.003443455)
};
\addplot+ [no marks, solid, black]coordinates {
(2.0, 0.5)
(1.510172, 0.3695378)
};
\addplot+ [no marks, solid, black]coordinates {
(2.0, 0.0)
(2.0, 0.5)
};
\addplot+ [no marks, solid, black]coordinates {
(1.671369, 0.003443455)
(1.510172, 0.3695378)
};
\addplot+ [no marks, solid, black]coordinates {
(2.0, 1.0)
(1.426582, 0.8944412)
};
\addplot+ [no marks, solid, black]coordinates {
(2.0, 0.5)
(2.0, 1.0)
};
\addplot+ [no marks, solid, black]coordinates {
(1.510172, 0.3695378)
(1.426582, 0.8944412)
};
\addplot+ [no marks, solid, black]coordinates {
(2.0, 1.0)
(2.0, 1.5)
};
\addplot+ [no marks, solid, black]coordinates {
(1.426582, 0.8944412)
(1.459547, 1.459424)
};
\addplot+ [no marks, solid, black]coordinates {
(-3.8285686989269494e-16, -1.0)
(-0.3680867, -1.508369)
};
\addplot+ [no marks, solid, black]coordinates {
(-0.7071067811865475, -0.7071067811865476)
(-0.8988851, -1.428805)
};
\addplot+ [no marks, solid, black]coordinates {
(-0.7071067811865475, -0.7071067811865476)
(-1.427553, -0.8944508)
};
\addplot+ [no marks, solid, black]coordinates {
(-1.0, 1.2246467991473532e-16)
(-1.510406, -0.3695582)
};
\addplot+ [no marks, solid, black]coordinates {
(-1.0, 1.2246467991473532e-16)
(-1.509204, 0.3672318)
};
\addplot+ [no marks, solid, black]coordinates {
(-0.7071067811865475, 0.7071067811865476)
(-1.429238, 0.8987769)
};
\addplot+ [no marks, solid, black]coordinates {
(-0.7071067811865475, 0.7071067811865476)
(-0.8962192, 1.428906)
};
\addplot+ [no marks, solid, black]coordinates {
(6.123233995736766e-17, 1.0)
(-0.3693048, 1.511127)
};
\addplot+ [no marks, solid, black]coordinates {
(6.123233995736766e-17, 1.0)
(0.3667017, 1.509209)
};
\addplot+ [no marks, solid, black]coordinates {
(0.7071067811865476, 0.7071067811865475)
(0.8982224, 1.429057)
};
\addplot+ [no marks, solid, black]coordinates {
(0.7071067811865476, 0.7071067811865475)
(1.426582, 0.8944412)
};
\addplot+ [no marks, solid, black]coordinates {
(1.0, 0.0)
(1.510172, 0.3695378)
};
\addplot+ [no marks, solid, black]coordinates {
(1.0, 0.0)
(1.509184, -0.3672392)
};
\addplot+ [no marks, solid, black]coordinates {
(0.7071067811865476, -0.7071067811865475)
(1.429215, -0.8987309)
};
\addplot+ [no marks, solid, black]coordinates {
(0.7071067811865476, -0.7071067811865475)
(0.8958907, -1.428344)
};
\addplot+ [no marks, solid, black]coordinates {
(0.3679869, -1.50956)
(-3.8285686989269494e-16, -1.0)
};
\draw[very thick, red] (axis cs:0, 0) circle[radius=1];
\end{axis}

\end{tikzpicture}
     \caption{Domain used for MMS solution~\eref{eqn:mms}. The thick red line is
    the interface between the two subdomains $\Omega_{1}$ and $\Omega_{2}$. The
    thin black lines show the finite difference block
    interfaces.\label{fig:square:circle}}
  \end{center}
\end{figure}
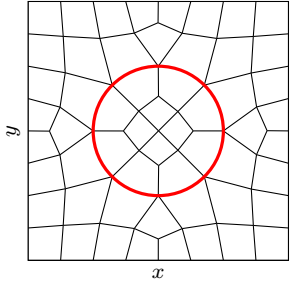

The manufactured solution is taken to be
\begin{equation}\label{eqn:mms}
  u(x,y) =
  \begin{cases}
    \frac{e}{1+e}\left( 2 - e^{-r^2}\right) r \sin(\theta), \quad (x, y) \in \Omega_1,\\
    {(r- 1)}^2 \cos(\theta) + (r - 1)\sin(\theta),\quad (x, y) \in \Omega_2,
  \end{cases}
\end{equation}
where $r = \sqrt{x^2 + y^2}$ and $-\pi \le \theta = \tan^{-1}(y/x) < \pi$. This
solution has the property that along $\Gamma_{I}$ the solution $u$ is
discontinuous but the weighted normal derivative $\vv{n} \cdot \nabla{u}$ is
continuous. The boundary, jump, and forcing data are found by
using~\eref{eqn:mms} in governing equations~\eref{eqn:gov}.

The test is run with the block decomposition shown in
Figure~\ref{fig:square:circle}. Each block uses an $(N+1) \times (N+1)$ grid of
points where $N$ will be increased with grid refinement. The error is measured
using the discrete norm
\begin{equation*}
  \begin{split}
    \text{error}_N &= \sqrt{\sum_{b=1}^{N_{b}} \VV{\Delta}_b^{T} \MM{J}_{b} \left(\mm{H}
    \otimes \mm{H}\right) \VV{\Delta}_{b}},\\
    \VV{\Delta}_b &= \VV{u}_{b} - u\left(\VV{x}_{b}, \VV{y}_{b}\right).
  \end{split}
\end{equation*}
Here $\VV{J}_{b}$ is the diagonal matrix of Jacobian determinants for block $b$
and $u\left(\VV{x}_{b}, \VV{y}_{b}\right)$ is the exact solution~\eref{eqn:mms}
evaluated at the grid points of block $b$.  Table~\ref{tab:error} shows the
error and convergence rate estimates with increasing $N$ for $2p = 2, 4, 6$, and
reflect global convergence rates of 2, 4, and 5, respectively.
\begin{table}
  \begin{center}
    \begin{tabular}{ccccccc}
      \toprule
      \multicolumn{1}{c}{} &  \multicolumn{2}{c}{2nd Order} &    \multicolumn{2}{c}{4th Order}&    \multicolumn{2}{c}{6th Order} \\
      \midrule
      $N$              &            error$_N$  &   rate &              error$_N$ &    rate &              error$_N$ &   rate \\
      $17 \times 2^0 $ & $2.90 \times 10^{-4}$ &        & $1.81 \times 10^{-6}$  &         & $3.02 \times 10^{-7} $ &        \\
      $17 \times 2^1 $ & $7.23 \times 10^{-5}$ & $2.01$ & $1.25 \times 10^{-7}$  & $3.86$  & $1.10 \times 10^{-8} $ & $4.66$ \\
      $17 \times 2^2 $ & $1.80 \times 10^{-5}$ & $2.00$ & $8.32 \times 10^{-9}$  & $3.91$  & $4.26 \times 10^{-10}$ & $4.81$ \\
      $17 \times 2^3 $ & $4.51 \times 10^{-6}$ & $2.00$ & $5.45 \times 10^{-10}$ & $3.93$  & $1.42 \times 10^{-11}$ & $4.90$ \\
      \bottomrule
    \end{tabular}
  \end{center}
  \caption{Error and convergence rates using the method of manufactured
  solutions.\label{tab:error}}
\end{table}

As a final numerical result, the accuracy of the weighted normal derivative
along the interface is considered. Using the same problem setup as above, the
weighted interface derivative are taken to be the penalty term
$\vv{\hat{\sigma}}$ computed using \eref{eqn:IP:flux}; by construction
\eref{eqn:locked:IC} implies that the normal derivative is equal and magnitude
and opposite in sign across the interface. The error in the normal derivative is
defined to be
\begin{equation*}
  \begin{split}
    \text{interface error}_N &= \sqrt{\sum_{f \in \FF_{I}} \vv{\Delta}_f^{T}
    \mm{S}_{J,f} \mm{H} \vv{\Delta}_{f}},\\
    \VV{\Delta}_f &= \vv{\hat{\sigma}}_{f} - \sigma\left(\vv{x}_{f},
    \vv{y}_{f}\right).
  \end{split}
\end{equation*}
The results of this are show in Table~\ref{tab:face:error}.
\begin{table}
  \begin{center}
    \begin{tabular}{ccccccc}
      \toprule
      \multicolumn{1}{c}{} &  \multicolumn{2}{c}{2nd Order} &    \multicolumn{2}{c}{4th Order}&    \multicolumn{2}{c}{6th Order} \\
      \midrule
      $N$              &   interface error$_N$ &   rate &   interface error$_N$ &    rate &  interface error$_N$ &   rate \\
      $17 \times 2^0 $ & $4.93 \times 10^{-3}$ &        & $1.35 \times 10^{-4}$ &        & $2.39 \times 10^{-5}$ &        \\
      $17 \times 2^1 $ & $1.83 \times 10^{-3}$ & $1.43$ & $2.69 \times 10^{-5}$ & $2.33$ & $2.53 \times 10^{-6}$ & $3.24$ \\
      $17 \times 2^2 $ & $6.66 \times 10^{-4}$ & $1.46$ & $5.03 \times 10^{-6}$ & $2.42$ & $2.46 \times 10^{-7}$ & $3.37$ \\
      $17 \times 2^3 $ & $2.39 \times 10^{-4}$ & $1.48$ & $9.16 \times 10^{-7}$ & $2.46$ & $2.28 \times 10^{-8}$ & $3.43$ \\
      \bottomrule
    \end{tabular}
  \end{center}
  \caption{Error and convergence rates using the method of manufactured
  solutions for the interface normal derivative.\label{tab:face:error}}
\end{table}
As can be seen, the interface derivative converges at a rate of the $p + 1 / 2$.
Since the boundary derivative operators only have accuracy of $p$ a reduced
convergence rate is expected.

\begin{figure}
  \begin{subfigure}[t]{0.5\textwidth}
    \centering
    \includegraphics{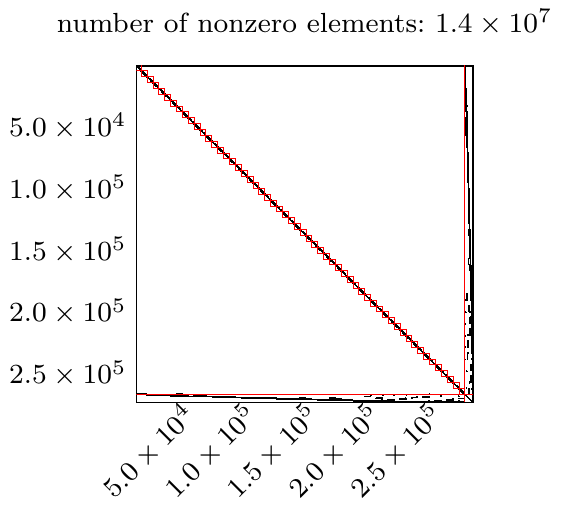}
    \caption{Monolithic system \eref{eqn:full:system}}
    \label{fig:spy:monolithic}
  \end{subfigure}
  \begin{subfigure}[t]{0.5\textwidth}
    \centering
    \includegraphics{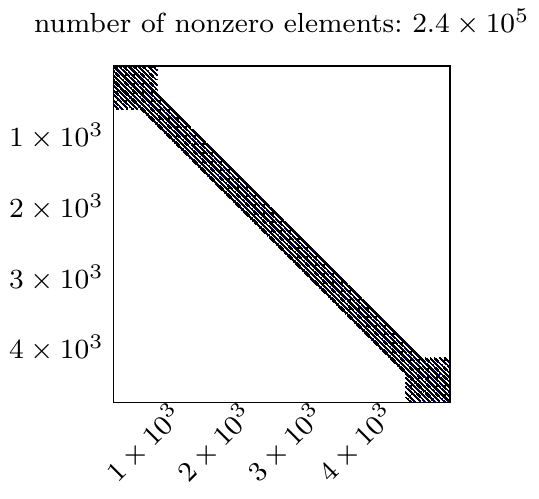}
    \caption{Single finite difference block}
    \label{fig:spy:single}
  \end{subfigure}
  \begin{subfigure}[t]{0.5\textwidth}
    \centering
    \includegraphics{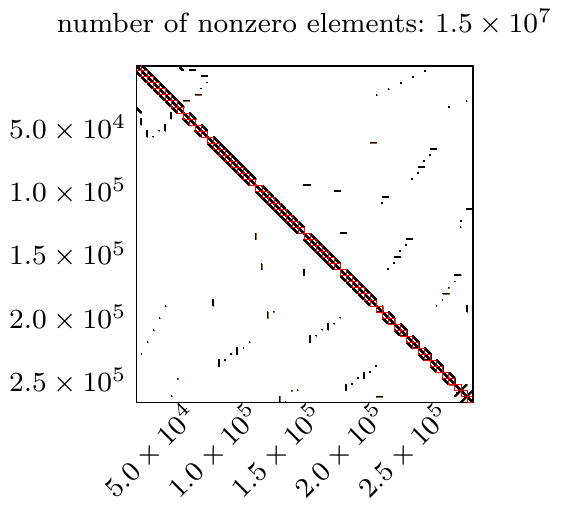}
    \caption{Volume system \eref{eqn:schur:system:D}}
    \label{fig:spy:volume}
  \end{subfigure}
  \begin{subfigure}[t]{0.5\textwidth}
    \centering
    \includegraphics{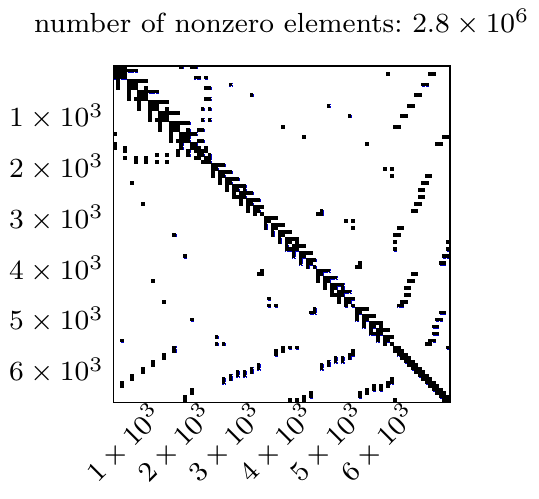}
    \caption{Trace system \eref{eqn:schur:system}}
    \label{fig:spy:trace}
  \end{subfigure}
  \caption{Spy plots showing the sparsity pattern for various systems related to
  the mesh in Figure~\ref{fig:square:circle} with SBP order $6$ and $N = 17 \times
  2^2$. The red lines in Subfigures~\ref{fig:spy:monolithic}
  and~\ref{fig:spy:volume} denote the diagonal submatrices associated with a
  single finite difference block and for the monolithic system the connections
  between the trace and volume variables, $\Bar{F}$ in \eref{eqn:full:system}.
  \label{fig:spy}}
\end{figure}
To highlight the sparsity and reduction of system size we consider spy plots in
Figure~\ref{fig:spy} for the following matrices: the four matrices in the
monolithic system with both volume and trace variables \eref{eqn:full:system},
a single finite difference block from the monolithic system, the Schur
complement matrix obtained by removing the volume variables
\eref{eqn:schur:system}, and the Schur complement matrix obtained by removing
the trace variables (20).
\begin{table}
  \begin{center}
    \begin{tabular}{rrrr}
      \toprule
      \multicolumn{1}{c}{$N$} &
        $N_{p}^{\text{(vol)}}$ &
        $N_{p}^{\text{(tr)}}$ &
        $N_{p}^{\text{(vol)}} / N_{p}^{\text{(tr)}}$\\
      \midrule
      $17 \times 2^0$ &   $18144$ &  $1728$ & $10.5$ \\
      $17 \times 2^1$ &   $68600$ &  $3360$ & $20.4$ \\
      $17 \times 2^2$ &  $266616$ &  $6624$ & $40.3$ \\
      $17 \times 2^3$ & $1051064$ & $13152$ & $79.9$ \\
      \bottomrule
    \end{tabular}
  \end{center}
  \caption{Comparison of the number of volume and trace points for the mesh
  shown in Figure~\ref{fig:square:circle} with the mesh sizes of
  Table~\ref{tab:error}.\label{tab:Np}}
\end{table}
Additionally, Table~\ref{tab:Np} gives the number of volume and trace
points for each $N$ are given.  If $N_{b}$ is the number of blocks and $N_{I}$
the number of internal interfaces, the number of volume and trace points are
\begin{align*}
  N_{p}^{\text{(vol)}} &= {(N+1)}^2 N_{b},\\
  N_{p}^{\text{(tr)}}  &= (N+1) N_{I},
\end{align*}
respectively; the mesh in Figure~\ref{fig:square:circle} has $N_b = 56$
blocks and $N_{I} = 96$ internal interfaces.

\section{Conclusions}\label{sec:conclusions}

We have developed a hybridized, summation-by-parts finite difference method for
elliptic PDEs, where boundary and interface conditions are enforced weakly
through the simultaneous-approximation-term method. The hybridization defines a
global and local problem, which through the Schur complement, results in a
linear system with reduced size.  We proved positive-definiteness of both the
local and global problems with arbitrarily heterogeneous material properties.
The theoretical results were corroborated through numerical experiments and
showed convergence to an exact solution at the expected rate.

All of the results in \sref{sec:numerical} used sparse direct solves; the sparse
Cholesky factorization within Julia is CHOLMOD \citep{cholmod}. Even though the
trace variable system is smaller and has fewer non-zero entries, it is still
possible that this system may be harder to solve in general than either the
volume variable system or the monolithic system. If direct methods are used,
comparisons with other direct methods, such as nested dissection, e.g.,
\citep{george1973nested, davis2006direct}, should be considered. Similarly, if
iterative methods are to be used, there is the possibility of using direct
methods for the local problems and iterative methods for the trace system. To do
this efficiently will require the development of preconditioners for the trace
system. Since the discrete scheme closely resembles the hybridized discontinuous
Galerkin interior penalty method
\citep[IP-H]{CockburnGopalakrishnanLazarov2009}, problems which can be handled
with IP-H maybe amenable to the presented hybridized SBP scheme.  Iterative
methods could also be explored for the local problem, such as SBP-SAT specific
geometric multigrid techniques \citep{ruggiu2018new}.

Another avenue for future work is construction of hybridized SBP schemes that
more closely resemble other hybridizable discontinuous Galerkin methods. The
scheme may also be applicable to nonlinear elliptic problems, the challenge here
would be efficient methods for solving the local problems, as in the non-linear
context direct solves may be inefficient.  In recent years non-conforming
coupling of SBP schemes has been of particular interest
\citep{mattsson2010stable, wang2016high, kozdon2016stable}, and could be
explored in the hybridized context. One challenge which may arise is the need
for a single trace variable, which maybe challenging for most of the present
non-conforming SBP formulations.

\appendix
\section{Proofs of Key Results}
To simplify the presentation of the results, the proofs of the key results in
the paper are given here in the appendix.
\subsection{Proof of Theorem~\ref{thm:loc:PD} (Symmetric Positive Definiteness of the
Local Problem)}\label{sec:app:loc:PD}
Here we provide conditions that ensure that the local problem is symmetric
positive definite. To do this we need a few auxiliary lemmas.

First we assume that the operators $\MM{A}_{rr}^{\left(c_{rr}\right)}$ and
$\MM{A}_{ss}^{\left(c_{ss}\right)}$ are compatible with the first derivative
(volume) operator $\mm{D}$ in the sense of \citet[Definition 2.4]{Mat12}:
\begin{assumption}[Remainder Assumption]\label{assmp:remainder}
  The matrices $\MM{A}_{rr}^{\left(c_{rr}\right)}$ and
  $\MM{A}_{ss}^{\left(c_{ss}\right)}$ satisfy the following remainder
  equalities:
  \begin{equation*}
    \begin{split}
      \MM{A}_{rr}^{\left(c_{rr}\right)}& =
      \left(\mm{I} \otimes \mm{D}^{T}\right)
      \MM{C}_{rr} \left(\mm{H} \otimes \mm{H}\right)
      \left(\mm{I} \otimes \mm{D}\right)
      +
      \MM{R}_{rr}^{\left(c_{rr}\right)},\\
      \MM{A}_{ss}^{\left(c_{ss}\right)}& =
      \left(\mm{D}^{T} \otimes \mm{I}\right)
      \MM{C}_{ss} \left(\mm{H} \otimes \mm{H}\right)
      \left(\mm{D} \otimes \mm{I}\right)
      +
      \MM{R}_{ss}^{\left(c_{ss}\right)},
    \end{split}
  \end{equation*}
  where $\MM{R}_{rr}^{\left(c_{rr}\right)}$ and
  $\MM{R}_{ss}^{\left(c_{ss}\right)}$ are symmetric positive semidefinite
  matrices and that
  \begin{align*}
    \VV{1} \in \nullspace\left(\MM{A}_{rr}^{\left(c_{rr}\right)}\right),
    \qquad
    \VV{1} \in \nullspace\left(\MM{A}_{ss}^{\left(c_{ss}\right)}\right).
  \end{align*}
\end{assumption}
The assumption on the nullspace was not a part of the original assumption of
from \citet{Mat12}, but is reasonable for a consistent approximation of the
second derivative.  The operators used in \sref{sec:numerical} satisfy the
Remainder Assumption~\citep{Mat12}.

We also utilize the following lemma from~\citet[Lemma 3]{VM14} which relates the
$\MM{A}_{rr}^{\left(c_{rr}\right)}$ and $\MM{A}_{ss}^{\left(c_{ss}\right)}$ to
boundary derivative operators $\vv{d}_{0}$ and $\vv{d}_{N}$:
\begin{lemma}[Borrowing Lemma]\label{lemma:borrowing}
  The matrices $\MM{A}_{rr}^{\left(c_{rr}\right)}$ and
  $\MM{A}_{ss}^{\left(c_{ss}\right)}$ satisfy the following borrowing
  equalities:
  \begin{equation*}
    \begin{split}
    \MM{A}_{rr}^{\left(c_{rr}\right)}& =
    h\beta
    \left(\mm{I} \otimes \vv{d}_{0}\right)
    \mm{H}\mm{\mathcal{C}}_{rr}^{0:}
    \left(\mm{I} \otimes \vv{d}_{0}^{T}\right)\\
    & \quad +
    h\beta
    \left(\mm{I} \otimes \vv{d}_{N}\right)
    \mm{H}\mm{\mathcal{C}}_{rr}^{N:}
    \left(\mm{I} \otimes \vv{d}_{N}^{T}\right)
    +\MM{\mathcal{A}}_{rr}^{\left(c_{rr}\right)},\\
    \MM{A}_{ss}^{\left(c_{ss}\right)}& =
    h\beta
    \left(\vv{d}_{0} \otimes \mm{I}\right)
    \mm{H}\mm{\mathcal{C}}_{ss}^{:0}
    \left(\vv{d}_{0}^{T} \otimes \mm{I}\right)\\
    & \quad +
    h\beta
    \left(\vv{d}_{N} \otimes \mm{I}\right)
    \mm{H}\mm{\mathcal{C}}_{ss}^{:N}
    \left(\vv{d}_{N}^{T} \otimes \mm{I}\right)
    +\MM{\mathcal{A}}_{ss}^{\left(c_{ss}\right)}.
    \end{split}
  \end{equation*}
  Here $\MM{\mathcal{A}}_{rr}^{\left(c_{rr}\right)}$ and
  $\MM{\mathcal{A}}_{ss}^{\left(c_{ss}\right)}$ are symmetric positive
  semidefinite matrices and the parameter $\beta$ depends on the order of
  the operators but is independent of $N$. The diagonal matrices
  $\mm{\mathcal{C}}_{rr}^{0:}$, $\mm{\mathcal{C}}_{rr}^{N:}$,
  $\mm{\mathcal{C}}_{ss}^{:0}$, and $\mm{\mathcal{C}}_{ss}^{:N}$
  have nonzero elements:
  \begin{equation}
    \label{eqn:borrow:C}
    \begin{alignedat}{2}
      {\left[\mm{\mathcal{C}}_{rr}^{0:}\right]}_{jj} &=
      \min_{k=0,\dots,l} {\left\{c_{rr}\right\}}_{kj},\quad&
      {\left[\mm{\mathcal{C}}_{rr}^{N:}\right]}_{jj} &=
      \min_{k=N-l,\dots,N} {\left\{c_{rr}\right\}}_{kj},\\
      {\left[\mm{\mathcal{C}}_{ss}^{:0}\right]}_{ii} &=
      \min_{k=0,\dots,l} {\left\{c_{ss}\right\}}_{ik},&
      {\left[\mm{\mathcal{C}}_{ss}^{:N}\right]}_{ii} &=
      \min_{k=N-l,\dots,N} {\left\{c_{ss}\right\}}_{ik},
    \end{alignedat}
  \end{equation}
  where $l$ is a parameter that depends on the order of the scheme and the
  notation ${\{\cdot\}}_{ij}$ denotes that the grid function inside the bracket
  is evaluated at grid point $i,j$.
\end{lemma}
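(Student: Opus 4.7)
The plan is to reduce the two-dimensional claim to the one-dimensional Borrowing Lemma of \citet[Lemma 3]{VM14} using the tensor-product structure of $\MM{A}_{rr}^{(c_{rr})}$ and $\MM{A}_{ss}^{(c_{ss})}$ exhibited in~\eref{eqn:A}. I would only prove the $\MM{A}_{rr}^{(c_{rr})}$ statement in detail, since $\MM{A}_{ss}^{(c_{ss})}$ follows by the same argument after swapping the two Kronecker slots.

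First I would observe that~\eref{eqn:A} writes $\MM{A}_{rr}^{(c_{rr})}$ as a weighted sum, over constant-$s$ grid lines indexed by $j$, of the one-dimensional SBP remainder matrices $\mm{A}^{(C_{rr}^{:j})}$ acting in the $r$-direction with coefficient vector $\{c_{rr}^{ij}\}_{i=0}^{N}$. The 1D Borrowing Lemma then asserts the existence of a constant $\beta>0$ depending only on the operator order (not on $N$ or on $j$) and a symmetric positive semidefinite matrix $\mathcal{A}^{(C_{rr}^{:j})}$ with
\begin{equation*}
  \mm{A}^{(C_{rr}^{:j})} = h\beta\bigl[\mm{\mathcal{C}}_{rr}^{0:}\bigr]_{jj}\vv{d}_{0}\vv{d}_{0}^{T} + h\beta\bigl[\mm{\mathcal{C}}_{rr}^{N:}\bigr]_{jj}\vv{d}_{N}\vv{d}_{N}^{T} + \mathcal{A}^{(C_{rr}^{:j})},
\end{equation*}
where the scalar weights are precisely the diagonal entries in~\eref{eqn:borrow:C} because $[\mm{\mathcal{C}}_{rr}^{0:}]_{jj} = \min_{k=0,\dots,l}\{c_{rr}\}_{kj}$ is the minimum of the coefficient over the first $l{+}1$ entries of the $j$th grid line.

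Next I would substitute this expansion term-by-term into~\eref{eqn:A}. For the face-1 boundary piece, using $(\vv{e}_{j}\otimes\mm{I})(\vv{d}_{0}\vv{d}_{0}^{T})(\vv{e}_{j}^{T}\otimes\mm{I}) = (\vv{e}_{j}\vv{e}_{j}^{T})\otimes(\vv{d}_{0}\vv{d}_{0}^{T})$ and the fact that $\mm{H}$ is diagonal, the sum over $j$ collapses to
\begin{equation*}
  h\beta\,(\mm{H}\otimes\mm{I})\sum_{j=0}^{N}\bigl[\mm{\mathcal{C}}_{rr}^{0:}\bigr]_{jj}(\vv{e}_{j}\vv{e}_{j}^{T})\otimes(\vv{d}_{0}\vv{d}_{0}^{T}) = h\beta\,(\mm{H}\mm{\mathcal{C}}_{rr}^{0:})\otimes(\vv{d}_{0}\vv{d}_{0}^{T}).
\end{equation*}
Applying the mixed-product identity $(A\otimes B)(C\otimes D)=(AC)\otimes(BD)$ twice rewrites this as $h\beta\,(\mm{I}\otimes\vv{d}_{0})\,\mm{H}\mm{\mathcal{C}}_{rr}^{0:}\,(\mm{I}\otimes\vv{d}_{0}^{T})$, matching the first term in the claimed decomposition; the face-2 term is handled identically.

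Finally, the remainder is
\begin{equation*}
  \MM{\mathcal{A}}_{rr}^{(c_{rr})} = \sum_{j=0}^{N}[\mm{H}]_{jj}\,(\vv{e}_{j}\otimes\mm{I})\,\mathcal{A}^{(C_{rr}^{:j})}\,(\vv{e}_{j}^{T}\otimes\mm{I}),
\end{equation*}
which is symmetric positive semidefinite as a nonnegatively-weighted sum of congruences of PSD matrices (recall $[\mm{H}]_{jj}>0$ since $\mm{H}$ is a diagonal-norm SBP quadrature). The only real obstacle in this plan is bookkeeping: one must check that the $j$-by-$j$ constants produced by the 1D lemma assemble into precisely the diagonal matrices $\mm{\mathcal{C}}_{rr}^{0:},\mm{\mathcal{C}}_{rr}^{N:}$ defined in~\eref{eqn:borrow:C} (and analogously for the $s$-direction), and that a single $\beta$ works uniformly in $j$. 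The latter holds because the 1D constant in \citet{VM14} depends only on the stencil coefficients and, for $2p=6$, on the free-parameter choice recorded in Footnote~\ref{footnote:x1}, all of which are independent of the grid line.
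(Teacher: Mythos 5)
Your proposal is correct, but it is worth being clear about what the paper actually does here: the paper offers no proof of this lemma at all --- it is imported wholesale as Lemma~3 of \citet{VM14}, with the order-dependent constants $\beta$ and $l$ simply recorded in Table~\ref{tab:borrow:params} and Footnote~\ref{footnote:x1}. What you supply is a genuine derivation of the two-dimensional statement from the one-dimensional borrowing estimate, and the reduction is sound: the representation of $\MM{A}_{rr}^{(c_{rr})}$ in \eref{eqn:A} as $\left(\mm{H}\otimes\mm{I}\right)\sum_{j}\left(\vv{e}_{j}\otimes\mm{I}\right)\mm{A}^{(C_{rr}^{:j})}\left(\vv{e}_{j}^{T}\otimes\mm{I}\right)$ is exactly a line-by-line application of the 1D operator, your Kronecker bookkeeping respects the paper's convention that the first slot carries the $s$-index, the sum over $j$ of $[\mm{\mathcal{C}}_{rr}^{0:}]_{jj}\,(\vv{e}_{j}\vv{e}_{j}^{T})\otimes(\vv{d}_{0}\vv{d}_{0}^{T})$ does collapse to $\mm{\mathcal{C}}_{rr}^{0:}\otimes(\vv{d}_{0}\vv{d}_{0}^{T})$ because $\mm{\mathcal{C}}_{rr}^{0:}$ is diagonal, and the remainder $\sum_{j}[\mm{H}]_{jj}\left(\vv{e}_{j}\otimes\mm{I}\right)\mathcal{A}^{(C_{rr}^{:j})}\left(\vv{e}_{j}^{T}\otimes\mm{I}\right)$ is positive semidefinite as a positively weighted sum of congruences. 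What your argument buys is that only the \emph{one-dimensional} borrowing inequality needs to be taken on faith from \citet{VM14}; what it does not (and cannot, without reproducing that reference) supply is the 1D estimate itself --- the existence of $\beta>0$ independent of $N$ and the identification of $l$ with the boundary-closure width --- which is where the actual analysis of the SBP boundary stencils lives. Uniformity of $\beta$ in $j$ is automatic, as you note, since the 1D constant depends only on the stencil, not on the coefficient vector. So the proposal is a correct and slightly more self-contained route than the paper's bare citation, conditional on the same external input.
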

The values of $\beta$ and $l$ used in
the~\nameref{lemma:borrowing} (Lemma~\ref{lemma:borrowing}) for the operators
used in this work are given in Table~\ref{tab:borrow:params}.
\begin{table}
  \small
  \begin{center}
    \begin{tabular}{cccc}
      \toprule
              & 2nd Order   & 4th Order    & 6th Order\footnoteref{footnote:x1}\\
      \midrule
      $l$     & 2           & 4            & 6           \\
      $\beta$ & 0.363636363 & 0.2505765857 & 0.1878687080\\
      \bottomrule
    \end{tabular}
    \caption{\nameref{lemma:borrowing} parameters $l$ and $\beta$ for operators
    used in this work~\citep[Table 1]{VM14}.\label{tab:borrow:params}}
  \end{center}
\end{table}

We additionally make the following linearity assumption (which the operators we
use satisfy) concerning the operators's dependence on the variable coefficients
and an assumption concerning the symmetric positive definiteness of the variable
coefficient matrix at each grid point.
\begin{assumption}\label{assumption:decomp}
  The matrices $\MM{A}_{rr}^{(c_{rr})}$ and $\MM{A}_{ss}^{(c_{ss})}$ depend
  linearly on the coefficient grid functions $c_{rr}$ and $c_{ss}$ so that they
  can be decomposed as
  \begin{equation*}
    \begin{split}
      \MM{A}_{rr}^{(c_{rr})} &= \MM{A}_{rr}^{(c_{rr}-\delta)} +
      \MM{A}_{rr}^{(\delta)},\\
      \MM{A}_{ss}^{(c_{ss})} &= \MM{A}_{ss}^{(c_{ss}-\delta)} +
      \MM{A}_{ss}^{(\delta)},
    \end{split}
  \end{equation*}
  where $\delta$ is a grid function.
\end{assumption}
\begin{assumption}\label{assumption:coeff}
  At every grid point the grid functions $c_{rr}$, $c_{ss}$, and $c_{rs} =
  c_{sr}$ satisfy
  \begin{align*}
    c_{rr} &> 0, &
    c_{ss} &> 0, &
    c_{rr} c_{ss} > c_{rs}^2
  \end{align*}
  which implies that the matrix
  \begin{align*}
    C =
    \begin{bmatrix}
      c_{rr} & c_{rs}\\
      c_{rs} & c_{ss}
    \end{bmatrix}
  \end{align*}
  is symmetric positive definite with eigenvalues
  \begin{equation}
    \label{eqn:lambda}
    \begin{split}
      \psi_{\max} = \frac{1}{2}\left(c_{rr} + c_{ss} +
      \sqrt{{\left(c_{rr}-c_{ss}\right)}^{2} + 4c_{rs}^{2}}\right),\\
      \psi_{\min} = \frac{1}{2}\left(c_{rr} + c_{ss} -
      \sqrt{{\left(c_{rr}-c_{ss}\right)}^{2} + 4c_{rs}^{2}}\right).
    \end{split}
  \end{equation}
\end{assumption}

We now state the following lemma which allows us to separate $\MM{A}$ into
three symmetric positive definite matrices by peeling off $\psi_{\min}$ at
every grid point.
\begin{lemma}\label{app:lemma:split}
  The matrix $\MM{A}$, defined by \eref{eqn:MMA}, can be written in the form
  \begin{align*}
    \MM{A} = \MM{\mathcal{A}} + \MM{A}_{rr}^{(\psi_{\min})}
    +
    \MM{A}_{ss}^{(\psi_{\min})},
  \end{align*}
  where $\MM{\mathcal{A}}$, $\MM{A}_{rr}^{(\psi_{\min})}$, and
  $\MM{A}_{ss}^{(\psi_{\min})}$ are symmetric positive semidefinite
  matrices. Here $\psi_{\min}$ is the grid function defined
  by~\eref{eqn:lambda}. Furthermore the nullspace of $\MM{\mathcal{A}}$ is
  $\nullspace(\MM{\mathcal{A}}) = \spanspace\{\VV{1}\}$, where
  $\VV{1}$ is the vector of ones.
\end{lemma}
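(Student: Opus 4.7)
The plan is to ``peel off'' the minimum-eigenvalue contribution of $C$ from the two diagonal second-derivative pieces. Using the linearity Assumption~\ref{assumption:decomp}, I set
\[
  \MM{\mathcal{A}} = \MM{A}_{rr}^{(c_{rr}-\psi_{\min})} + \MM{A}_{ss}^{(c_{ss}-\psi_{\min})} + \MM{A}_{rs}^{(c_{rs})} + \MM{A}_{sr}^{(c_{sr})},
\]
so that the definition of $\MM{A}$ in \eref{eqn:MMA} immediately yields $\MM{A} = \MM{\mathcal{A}} + \MM{A}_{rr}^{(\psi_{\min})} + \MM{A}_{ss}^{(\psi_{\min})}$. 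Assumption~\ref{assumption:coeff} makes $\psi_{\min}$ strictly positive at every grid point and therefore $c_{rr}-\psi_{\min}$, $c_{ss}-\psi_{\min}$ non-negative pointwise.

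For the two pieces with coefficient $\psi_{\min}$, the Remainder Assumption represents $\MM{A}_{rr}^{(\psi_{\min})}$ as a PSD bilinear form in $(\mm{I}\otimes\mm{D})\VV{u}$ weighted by $(\mm{H}\otimes\mm{H})\MM{\Psi}_{\min}$ plus the SPSD remainder $\MM{R}_{rr}^{(\psi_{\min})}$, and analogously for $\MM{A}_{ss}^{(\psi_{\min})}$; positivity of $\psi_{\min}$ gives SPSD in each case. For $\MM{\mathcal{A}}$ I apply the Remainder Assumption to the two diagonal pieces and use $\MM{A}_{sr}^{(c_{sr})}=(\MM{A}_{rs}^{(c_{rs})})^{T}$ to regroup the quadratic form as
\[
  \VV{u}^{T}\MM{\mathcal{A}}\VV{u} = \VV{u}^{T}\MM{R}_{rr}^{(c_{rr}-\psi_{\min})}\VV{u} + \VV{u}^{T}\MM{R}_{ss}^{(c_{ss}-\psi_{\min})}\VV{u} + \sum_{p} h_{p}\,\mathbf{v}_{p}^{T}\bigl(C_{p}-\psi_{\min,p} I\bigr)\mathbf{v}_{p},
\]
where $\mathbf{v}_{p}$ collects the two discrete first derivatives of $\VV{u}$ at grid point $p$ and $h_{p}$ is the corresponding $(\mm{H}\otimes\mm{H})$ quadrature weight. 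The $2\times 2$ matrix $C_{p}-\psi_{\min,p} I$ is PSD by the very definition \eref{eqn:lambda} of $\psi_{\min}$, so every summand is non-negative and $\MM{\mathcal{A}}\succeq 0$.

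The nullspace claim is the main obstacle. Containment $\spanspace\{\VV{1}\}\subseteq\nullspace(\MM{\mathcal{A}})$ is direct: Assumption~\ref{assmp:remainder} gives $\MM{A}_{rr}^{(\cdot)}\VV{1}=\MM{A}_{ss}^{(\cdot)}\VV{1}=0$, while $\mm{D}\vv{1}=0$ annihilates the cross terms. For the reverse inclusion, if $\MM{\mathcal{A}}\VV{u}=0$ then each SPSD summand in the above bilinear form must vanish individually, forcing $\bigl(C_{p}-\psi_{\min,p} I\bigr)\mathbf{v}_{p}=0$ at every grid point together with $\MM{R}_{rr}^{(c_{rr}-\psi_{\min})}\VV{u}=\MM{R}_{ss}^{(c_{ss}-\psi_{\min})}\VV{u}=0$. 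Concluding $\VV{u}\in\spanspace\{\VV{1}\}$ from these conditions is the subtle step: the plan is to exploit the tensor-product structure of the diagonal operators (whose nullspaces consist of grid functions constant along each $r$- or $s$-line, respectively) together with the pointwise rank constraint on $\mathbf{v}_{p}$ to rule out any non-constant $\VV{u}$.
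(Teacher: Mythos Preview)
Your decomposition and argument for positive semidefiniteness are exactly the paper's: split off $\psi_{\min}$ via Assumption~\ref{assumption:decomp}, apply the Remainder Assumption to the diagonal pieces, and combine with the exact cross terms from \eref{eqn:A} to obtain a pointwise $2\times2$ quadratic form with matrix $C_p-\psi_{\min,p}I$, which is PSD by construction. The paper writes this as an inequality (dropping the remainders), you keep the remainders and write an equality; the content is the same.

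Where you diverge is in rigor, not approach. At the nullspace reverse inclusion the paper simply asserts that the vanishing of the summed quadratic form forces ${(u_r)}_{i,j}={(u_s)}_{i,j}=0$ at every grid point, and then concludes $\VV{u}\in\spanspace\{\VV{1}\}$ from $\mm{D}\vv{1}=0$. You correctly flag that this is not immediate: since $C_p-\psi_{\min,p}I$ has a one-dimensional kernel, vanishing of the pointwise form only pins $[u_r,u_s]_p$ to that kernel direction, and the remainder terms being zero gives no explicit additional control because Assumption~\ref{assmp:remainder} does not characterize their nullspaces. Your proposed line of attack (tensor-product nullspace structure plus the pointwise rank-one constraint) is a reasonable plan, but the paper does not carry it out either---it states the conclusion and moves on. So your proposal matches the paper's argument and in fact identifies a step the paper leaves unjustified; note in particular that in the fully isotropic case $c_{rr}=c_{ss}$, $c_{rs}=0$ on the whole grid one gets $\MM{\mathcal{A}}=0$, so the nullspace claim as stated cannot hold without some implicit genericity assumption on the coefficients.
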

\begin{proof}
  By Assumption~\ref{assumption:decomp} we can write
  \begin{align*}
    \MM{A} &= \MM{A}^{\left(c_{rr}-\psi_{\min}\right)}_{rr}
    + \MM{A}^{\left(c_{ss}-\psi_{\min}\right)}_{ss}
    + \MM{A}^{\left(c_{rs}\right)}_{rs}
    + \MM{A}^{\left(c_{sr}\right)}_{sr}
    + \MM{A}^{\left(\psi_{\min}\right)}_{rr}
    + \MM{A}^{\left(\psi_{\min}\right)}_{ss}.
  \end{align*}
  The matrix
  \begin{align*}
    \MM{\mathcal{A}} &= \MM{A}^{\left(c_{rr}-\psi_{\min}\right)}_{rr}
    + \MM{A}^{\left(c_{ss}-\psi_{\min}\right)}_{ss}
    + \MM{A}^{\left(c_{rs}\right)}_{rs}
    + \MM{A}^{\left(c_{sr}\right)}_{sr}
  \end{align*}
  is clearly symmetric by construction. To show that the matrix is positive
  semidefinite we note that
  \begin{align}
    \label{eqn:proof:rr}
    \VV{u}^{T} \MM{A}^{\left(c_{rr}-\psi_{\min}\right)}_{rr}\VV{u}
    &\ge \VV{u}_{r}^{T} \left(\mm{H} \otimes \mm{H}\right) \left(\MM{C}_{rr}-\MM{\psi}_{\min}\right) \VV{u}_{r},\\
    \label{eqn:proof:ss}
    \VV{u}^{T} \MM{A}^{\left(c_{ss}-\psi_{\min}\right)}_{ss}\VV{u}
    &\ge \VV{u}_{s}^{T} \left(\mm{H} \otimes \mm{H}\right) \left(\MM{C}_{ss}-\MM{\psi}_{\min}\right) \VV{u}_{s},\\
    \label{eqn:proof:rs}
    \VV{u}^{T} \MM{A}^{\left(c_{rs}\right)}_{rs}\VV{u}
    &=
    \VV{u}^{T} \MM{A}^{\left(c_{sr}\right)}_{sr}\VV{u}
    =\VV{u}_{r}^{T} \left(\mm{H} \otimes \mm{H}\right) \MM{C}_{rs} \VV{u}_{s}.
  \end{align}
  Here we have defined the vectors $\VV{u}_{r} = \left(\mm{I} \otimes
  \mm{D}\right) \VV{u}$ and $\VV{u}_{s} = \left(\mm{D} \otimes
  \mm{I}\right) \VV{u}$. Inequalities~\eref{eqn:proof:rr}
  and~\eref{eqn:proof:ss} follow from the \nameref{assmp:remainder}
  and equality~\eref{eqn:proof:rs} follows from \eref{eqn:A}
  and the symmetry assumption ($c_{rs} = c_{sr}$). Using
  relationships~\eref{eqn:proof:rr}--\eref{eqn:proof:rs} we have that
  \begin{align}
    \label{eqn:umathcalAu}
    \VV{u}^{T}\MM{\mathcal{A}}\VV{u} &\ge
    \sum_{i=0}^{N} \sum_{j=0}^{N}
    {\left\{\left(\mm{H} \otimes \mm{H}\right)\right\}}_{ij}
    {\left\{
    \begin{bmatrix}
      u_{r}\\
      u_{s}
    \end{bmatrix}^{T}
    \begin{bmatrix}
      c_{rr} - \psi_{\min} & c_{rs}\\
      c_{rs} & c_{ss} - \psi_{\min}
    \end{bmatrix}
    \begin{bmatrix}
      u_{r}\\
      u_{s}
    \end{bmatrix}
    \right\}}_{i,j},
  \end{align}
  where the notation ${\left\{\cdot\right\}}_{i,j}$ denotes that the grid
  function inside the brackets is evaluated at grid point $i,j$. The $2\times2$
  matrix in \eref{eqn:umathcalAu} is the shift of the matrix $C$ by its minimum
  eigenvalue, thus by Assumption~\ref{assumption:coeff} is symmetric positive
  semidefinite.  It then follows that each term in the summation is non-negative
  and the matrix $\MM{\mathcal{A}}$ is symmetric positive semidefinite.

  The matrices $\MM{A}_{rr}^{(\psi_{\min})}$ and
  $\MM{A}_{ss}^{(\psi_{\min})}$ are clearly symmetric by construction, with
  positive semidefiniteness following from the positivity of $\psi_{\min}$
  and the \nameref{assmp:remainder}.

  We now show that $\nullspace(\MM{\mathcal{A}}) = \spanspace\{\VV{1}\}$.
  For the right-hand side of \eref{eqn:umathcalAu} to be zero it is required
  that ${\left(u_{r}\right)}_{i,j} = {\left(u_{s}\right)}_{i,j} = 0$ for all
  $i,j$. The only way for this to happen is if $\VV{u} = \alpha \VV{1}$ for some
  constant $\alpha$. Thus we have shown that $\nullspace(\MM{\mathcal{A}})
  \subseteq \spanspace\{\VV{1}\}$. To show equality we note that
  by Assumption~\ref{assmp:remainder} and the structure of
  $\MM{A}_{rs}^{(C_{rs})}$ and $\MM{A}_{sr}^{(C_{sr})}$ given in
  \eref{eqn:A}, the constant vector $\VV{1} \in
  \nullspace(\MM{\mathcal{A}})$. Together the above two results imply that
  $\nullspace(\MM{\mathcal{A}}) = \spanspace\{\VV{1}\}$.
\end{proof}

We now state the following lemma concerning $\MM{A}_{rr}^{(\psi_{\min})}$ and
$\MM{A}_{ss}^{(\psi_{\min})}$ which combine the \nameref{assmp:remainder} and
the \nameref{lemma:borrowing} to provide terms that can be used to bound indefinite
terms in the local operator $\MM{M}$.
\begin{lemma}\label{app:lemma:bound:deriv}
    The matrices $\MM{A}_{rr}^{(\psi_{\min})}$ and
    $\MM{A}_{ss}^{(\psi_{\min})}$ satisfy the following inequalities:
    \begin{equation*}
      \begin{split}
        \VV{u}^{T} \MM{A}_{rr}^{(\psi_{\min})} \VV{u}& \ge
      \frac{1}{2}\left[
        h\beta
      {\left(\vv{v}_{r}^{0:}\right)}^{T}
      \mm{H}\mm{\Psi}_{\min}^{0:}
      \vv{v}_{r}^{0:}
      +
        h\beta
      {\left(\vv{v}_{r}^{N:}\right)}^{T}
      \mm{H}\mm{\Psi}_{\min}^{N:}
      \vv{v}_{r}^{N:}
      \right]\\
      & \quad +
      \frac{1}{2}\left[
      h \alpha
      {\left(\vv{w}_{r}^{:0}\right)}^{T}
      \mm{H}\mm{\Psi}_{\min}^{:0}
      \vv{w}_{r}^{:0}
      +
      h \alpha
      {\left(\vv{w}_{r}^{:N}\right)}^{T}
      \mm{H}\mm{\Psi}_{\min}^{:N}
      \vv{w}_{r}^{:N}
      \right],\\
        \VV{u}^{T} \MM{A}_{ss}^{(\Psi_{\min})} \VV{u}& \ge
      \frac{1}{2}\left[
      h \alpha
      {\left(\vv{w}_{s}^{0:}\right)}^{T}
      \mm{H}\mm{\Psi}_{\min}^{0:}
      \vv{w}_{s}^{0:}
      +
      h \alpha
      {\left(\vv{w}_{s}^{N:}\right)}^{T}
      \mm{H}\mm{\Psi}_{\min}^{N:}
      \vv{w}_{s}^{N:}
      \right]
    \\
      & \quad +
      \frac{1}{2}\left[
      h\beta
      {\left(\vv{v}_{s}^{:0}\right)}^{T}
      \mm{H}\mm{\Psi}_{\min}^{:0}
      \vv{v}_{s}^{:0}
      +
        h\beta
      {\left(\vv{v}_{s}^{:N}\right)}^{T}
      \mm{H}\mm{\Psi}_{\min}^{:N}
      \vv{v}_{s}^{:N}
      \right],
      \end{split}
    \end{equation*}
    with
    $\alpha = \min \left\{{\left\{\mm{H}\right\}}_{00},
                          {\left\{\mm{H}\right\}}_{NN}\right\} / h$,
    i.e., the unscaled corner value in the H-matrix, and the (boundary)
    derivative vectors are defined as
    \begin{equation*}
    \begin{alignedat}{2}
      \vv{v}_{r}^{0:} &= \left(\mm{I} \otimes \vv{d}_{0}^{T}\right) \VV{u},\quad&
      \vv{v}_{r}^{N:} &= \left(\mm{I} \otimes \vv{d}_{N}^{T}\right) \VV{u},\\
      \vv{w}_{r}^{:0} &= \left(\vv{e}_{0}^{T} \otimes \mm{D}\right) \VV{u},&
      \vv{w}_{r}^{:N} &= \left(\vv{e}_{N}^{T} \otimes \mm{D}\right) \VV{u},\\
      \vv{w}_{s}^{0:} &= \left(\mm{D} \otimes \vv{e}_{0}^{T}\right) \VV{u},&
      \vv{w}_{s}^{N:} &= \left(\mm{D} \otimes \vv{e}_{N}^{T}\right) \VV{u},\\
      \vv{v}_{s}^{:0} &= \left(\vv{d}_{0}^{T} \otimes \mm{I}\right) \VV{u},&
      \vv{v}_{s}^{:N} &= \left(\vv{d}_{N}^{T} \otimes \mm{I}\right) \VV{u}.
    \end{alignedat}
    \end{equation*}
    The diagonal matrices $\MM{\Psi}_{\min}^{0:}$, $\MM{\Psi}_{\min}^{N:}$,
    $\MM{\Psi}_{\min}^{:0}$, and $\MM{\Psi}_{\min}^{:N}$ are defined
    by~\eref{eqn:borrow:C} using $\psi_{\min}$.
\end{lemma}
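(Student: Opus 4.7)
The plan is to prove the bound on $\MM{A}_{rr}^{(\psi_{\min})}$ by splitting it into two equal halves and treating each half with a different structural identity; the analogous bound for $\MM{A}_{ss}^{(\psi_{\min})}$ then follows by the obvious interchange of the two Kronecker factors. For the first half I would apply the Borrowing Lemma (Lemma~\ref{lemma:borrowing}) directly to $\MM{A}_{rr}^{(\psi_{\min})}$, which produces boundary-derivative quadratic forms at the $r=0$ and $r=N$ faces with weights $h\beta$ and the borrowing matrices $\mm{\Psi}_{\min}^{0:}, \mm{\Psi}_{\min}^{N:}$, plus a symmetric positive semidefinite remainder $\MM{\mathcal{A}}_{rr}^{(\psi_{\min})}$. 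Discarding this remainder and dividing by two yields the first bracketed expression in the claim, after identifying $(\mm{I} \otimes \vv{d}_0^T) \VV{u} = \vv{v}_r^{0:}$ and its $r=N$ analog.

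For the second half I would invoke the Remainder Assumption (Assumption~\ref{assmp:remainder}) to rewrite $\MM{A}_{rr}^{(\psi_{\min})} = (\mm{I} \otimes \mm{D}^T)\, \MM{\Psi}_{\min}\, (\mm{H} \otimes \mm{H})\, (\mm{I} \otimes \mm{D}) + \MM{R}_{rr}^{(\psi_{\min})}$, drop the positive semidefinite $\MM{R}_{rr}^{(\psi_{\min})}$, and set $\vv{u}_r = (\mm{I} \otimes \mm{D}) \VV{u}$ so that what survives is a diagonal quadratic form $\vv{u}_r^{T} \MM{\Psi}_{\min} (\mm{H} \otimes \mm{H}) \vv{u}_r$ with strictly positive diagonal entries (since $\psi_{\min} > 0$ by Assumption~\ref{assumption:coeff}). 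Every summand is non-negative, so I would retain only the contributions from the $s=0$ and $s=N$ grid lines. On those two rows the surviving $s$-weights are $H_{00}$ and $H_{NN}$, each bounded below by $\alpha h$ by definition of $\alpha$, while the surviving pointwise coefficients $\psi_{\min}^{j0}$ and $\psi_{\min}^{jN}$ dominate the corresponding borrowing minima $[\mm{\Psi}_{\min}^{:0}]_{jj}$ and $[\mm{\Psi}_{\min}^{:N}]_{jj}$ because a minimum over a set is no larger than any member. Rewriting the two truncated sums as $(\vv{w}_r^{:0})^T \mm{H} \mm{\Psi}_{\min}^{:0} \vv{w}_r^{:0}$ and its $s=N$ counterpart and dividing by two produces the second bracketed expression.

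The main difficulty is purely notational: the paper's Kronecker convention places $s$ in the outer factor and $r$ in the inner, so one must carefully track that $(\mm{I} \otimes \vv{d}_0^T)$ gives an $r$-boundary derivative varying along the $s$-direction while $(\vv{e}_0^T \otimes \mm{D})$ gives an $r$-interior derivative restricted to the $s=0$ face. Once the bookkeeping is in hand, the bound on $\MM{A}_{ss}^{(\psi_{\min})}$ is obtained by running the identical two-step argument with the two Kronecker factors interchanged, which swaps the roles of the Borrowing-Lemma ($\beta$) contributions and the corner-weight ($\alpha$) contributions.
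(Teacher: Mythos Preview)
Your proposal is correct and follows essentially the same route as the paper: derive one lower bound from the Borrowing Lemma (producing the $h\beta$ terms at the $r$-faces), derive a second from the Remainder Assumption by dropping $\MM{R}_{rr}^{(\psi_{\min})}$ and retaining only the $s=0,N$ contributions from the resulting nonnegative sum (producing the $h\alpha$ terms), and then average. Your explicit remark that the face values $\psi_{\min}^{i0},\psi_{\min}^{iN}$ dominate the borrowing minima $[\mm{\Psi}_{\min}^{:0}]_{ii},[\mm{\Psi}_{\min}^{:N}]_{ii}$ is a step the paper leaves implicit, so your write-up is slightly more careful there.
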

\begin{proof}
  We will prove the relationship for $\MM{A}_{rr}^{(\psi_{\min})}$, and the
  proof $\MM{A}_{ss}^{(\psi_{\min})}$ is analogous. First we note that by the
  \nameref{lemma:borrowing} it immediately follows that
  \begin{equation}
    \label{eqn:proof:borrow:first}
    \VV{u}^{T} \MM{A}_{rr}^{(\psi_{\min})} \VV{u} \ge
        h\beta
      {\left(\vv{v}_{r}^{0:}\right)}^{T}
      \mm{H}\mm{\Psi}_{\min}^{0:}
      \vv{v}_{r}^{0:}
      +
      h\beta
      {\left(\vv{v}_{r}^{N:}\right)}^{T}
      \mm{H}\mm{\Psi}_{\min}^{N:}
      \vv{v}_{r}^{N:}.
  \end{equation}
  Additionally by the \nameref{assmp:remainder} it follows that
  \begin{equation}
    \label{eqn:proof:borrow:second}
    \begin{split}
      \VV{u}^{T} \MM{A}_{rr}^{(\psi_{\min})} \VV{u} &\ge
      \VV{u}^{T}\left(\mm{I}\otimes\mm{D}^{T}\right)
      \left(\mm{H} \otimes \mm{H}\right)\MM{\Psi}_{\min}
      \left(\mm{I}\otimes\mm{D}\right) \VV{u}\\
      &=
      \sum_{j=0}^{N}
      {\left\{\mm{H}\right\}}_{jj}
      \VV{u}^{T}\left(\mm{e}_{j}\otimes\mm{D}^{T}\right)
      \mm{H}\MM{\Psi}_{\min}^{:j}
      \left(\mm{e}_{j}^{T}\otimes\mm{D}\right) \VV{u}\\
      &\ge
      \alpha h
      {\left(\vv{w}_{r}^{:0}\right)}^{T}
      \mm{H}\MM{\Psi}_{\min}^{:0}
      {\left(\vv{w}_{r}^{:0}\right)}^{T}
      +
      \alpha h
      {\left(\vv{w}_{r}^{:N}\right)}^{T}
      \mm{H}\MM{\Psi}_{\min}^{:N}
      {\left(\vv{w}_{r}^{:N}\right)}^{T};
    \end{split}
  \end{equation}
  since each term of the summation is positive, the last inequality follows by
  dropping all but the $j=0$ and $j=N$ terms of the summation. The result
  follows immediately by averaging~\eref{eqn:proof:borrow:first}
  and~\eref{eqn:proof:borrow:second}.
\end{proof}

We can now prove Theorem~\ref{thm:loc:PD} on the symmetric positive definiteness
of $\MM{M}$ as defined by~\eref{eqn:disc:system}.
\begin{proof}
  The structure of \eref{eqn:disc:system} directly implies that $\MM{M}$ is
  symmetric, in the remainder of the proof it is shown that $\MM{M}$ is also
  positive definite.

  We begin by recalling the definitions of $\MM{C}_{k}$ and $\mm{F}_{k}$
  in~\eref{eqn:full:disc} which allows us to write
  \begin{align}
    \label{eqn:Ck:Fk}
    \MM{C}_{k} = \mm{F}_{k} \mm{H}^{-1}\mm{\tau}_{k}^{-1}\mm{F}_{k}^{T} -
    \mm{G}_{k}^{T}\mm{H}^{-1}\mm{\tau}_{k}^{-1}\mm{G}_{k}.
  \end{align}
  Now considering the $\MM{M}$ weighted inner product we have that
  \begin{equation}\label{eqn:uMu}
  \begin{split}
    \VV{u}^{T}\MM{M}\VV{u}& =
    \VV{u}^{T}
    \left(\MM{A} + \sum_{k=1}^{4}\MM{C}_{k}\right)
    \VV{u}\\
    & =
    \VV{u}^{T}
    \left(
    \MM{\mathcal{A}} +
    \sum_{k=1}^{4}
    \mm{F}_{k}
    \mm{H}^{-1}\mm{\tau}_{k}^{-1}
    \mm{F}_{k}^{T}
    \right)
    \VV{u}
    \\
    & \quad +
    \VV{u}^{T}
    \left(
    \MM{A}_{rr}^{(\psi_{\min})}
    +
    \MM{A}_{ss}^{(\psi_{\min})}
    - \sum_{k=1}^{4}
    \mm{G}_{k}^{T}\mm{H}^{-1}\mm{\tau}_{k}^{-1}\mm{G}_{k}
    \right)
    \VV{u}.
  \end{split}
  \end{equation}
  Here we have used Lemma~\ref{app:lemma:split} to split $\MM{A}$.

  If $\mm{\tau}_{k} > 0$ then it follows for all $\VV{u}$ that
  \begin{align*}
    \VV{u}^{T} \mm{F}_{k} \mm{H}^{-1}\mm{\tau}_{k}^{-1} \mm{F}_{k}^{T}\VV{u}
    \ge 0.
  \end{align*}
  Additionally, if $\VV{u} = c \VV{1}$ for some constant $c \ne 0$ then it is a
  strict inequality since
  \begin{align*}
    \mm{F}_{k}^{T} \VV{1} = -\mm{H} \mm{\tau}_{k}\vv{1} \ne \vv{0}.
  \end{align*}
  Since by Lemma~\ref{app:lemma:split} the matrix $\MM{A}$ is symmetric positive
  semidefinite with $\nullspace(\MM{\mathcal{A}}) = \spanspace(\VV{1})$, this
  implies that the matrix
  \begin{align*}
    \MM{\mathcal{A}} +
    \sum_{k=1}^{4}
    \mm{F}_{k}
    \mm{H}^{-1}\mm{\tau}_{k}^{-1}
    \mm{F}_{k}^{T} \succ 0,
  \end{align*}
  that is the matrix is positive definite.
  To complete the proof all that remains is to show the remaining matrix
  in~\eref{eqn:uMu} is positive semidefinite, namely
  \begin{align*}
    \MM{A}_{rr}^{(\psi_{\min})}
    +
    \MM{A}_{ss}^{(\psi_{\min})}
    - \sum_{k=1}^{4}
    \mm{G}_{k}^{T}\mm{H}^{-1}\mm{\tau}_{k}^{-1}\mm{G}_{k} \succeq 0.
  \end{align*}

  Considering the quantity $\VV{u}^{T} \left(
    \MM{A}_{rr}^{(\psi_{\min})}
    +
    \MM{A}_{ss}^{(\psi_{\min})}\right)
  \VV{u}$ and using Lemma~\ref{app:lemma:bound:deriv} we can write:
  \begin{equation}
    \label{eqn:uAu}
    \begin{split}
    \VV{u}^{T}
    &
    \left(
    \MM{A}_{rr}^{(\psi_{\min})}
    +
    \MM{A}_{ss}^{(\psi_{\min})}
    \right)
    \VV{u}\\
    &\ge
    \frac{1}{2}
    \left(
      h\beta
      {\left(\vv{v}_{r}^{0:}\right)}^{T}
      \mm{H}\mm{\Psi}_{\min}^{0:}
      \vv{v}_{r}^{0:}
      +
      h \alpha
      {\left(\vv{w}_{s}^{0:}\right)}^{T}
      \mm{H}\mm{\Psi}_{\min}^{0:}
      \vv{w}_{s}^{0:}
    \right)
    \\ &
    \phantom{\ge}
    +
    \frac{1}{2}
    \left(
      h\beta
      {\left(\vv{v}_{r}^{N:}\right)}^{T}
      \mm{H}\mm{\Psi}_{\min}^{N:}
      \vv{v}_{r}^{N:}
      +
      h \alpha
      {\left(\vv{w}_{s}^{N:}\right)}^{T}
      \mm{H}\mm{\Psi}_{\min}^{N:}
      \vv{w}_{s}^{N:}
    \right)
    \\ &
    \phantom{\ge}
    +
    \frac{1}{2}
    \left(
      h\alpha
      {\left(\vv{w}_{r}^{:0}\right)}^{T}
      \mm{H}\mm{\Psi}_{\min}^{:0}
      \vv{w}_{r}^{:0}
      +
      h \beta
      {\left(\vv{v}_{s}^{:0}\right)}^{T}
      \mm{H}\mm{\Psi}_{\min}^{:0}
      \vv{v}_{s}^{:0}
    \right)
    \\ &
    \phantom{\ge}
    +
    \frac{1}{2}
    \left(
      h\alpha
      {\left(\vv{w}_{r}^{:N}\right)}^{T}
      \mm{H}\mm{\Psi}_{\min}^{:N}
      \vv{w}_{r}^{:N}
      +
      h \beta
      {\left(\vv{v}_{s}^{:N}\right)}^{T}
      \mm{H}\mm{\Psi}_{\min}^{:N}
      \vv{v}_{s}^{:N}
    \right).
  \end{split}
  \end{equation}
  Now considering the $k=1$ term of the last summation in~\eref{eqn:uMu} we have
  \begin{align}
    \label{eqn:uGHGu}
    \VV{u}^{T}
    \mm{G}_{1}^{T}\mm{H}^{-1}\mm{\tau}_{1}^{-1}\mm{G}_{1}
    \VV{u}
    =
    {\left(\mm{C}_{rr}^{0:}\vv{v}_{r}^{0:} + \mm{C}_{rs}^{0:}
    \vv{w}_{s}^{0:}\right)}^{T}
    \mm{H}\mm{\tau}_{1}^{-1}
    \left(\mm{C}_{rr}^{0:}\vv{v}_{r}^{0:} + \mm{C}_{rs}^{0:}
    \vv{w}_{s}^{0:}\right).
  \end{align}
  We now need to use the positive term related to face $1$ of~\eref{eqn:uAu}
  to bound the negative contribution from~\eref{eqn:uGHGu}. Doing this
  subtraction for face $1$ then gives:
  \begin{equation}\label{eqn:small:eig:mat}
    \begin{split}
      &\frac{1}{2}
      \left(
      h\beta
      {\left(\vv{v}_{r}^{0:}\right)}^{T}
      \mm{H}\mm{\Psi}_{\min}^{0:}
      \vv{v}_{r}^{0:}
      +
      h \alpha
      {\left(\vv{w}_{s}^{0:}\right)}^{T}
      \mm{H}\mm{\Psi}_{\min}^{0:}
      \vv{w}_{s}^{0:}
      \right)\\
      &\quad-
      {\left(\mm{C}_{rr}^{0:}\vv{v}_{r}^{0:} + \mm{C}_{rs}^{0:}
      \vv{w}_{s}^{0:}\right)}^{T}
      \mm{H}\mm{\tau}_{1}^{-1}
      \left(\mm{C}_{rr}^{0:}\vv{v}_{r}^{0:} + \mm{C}_{rs}^{0:}
      \vv{w}_{s}^{0:}\right)\\
      &=
      \begin{bmatrix}
        \vv{\hat{v}}_{r}^{0:}\\
        \vv{\hat{w}}_{s}^{0:}
      \end{bmatrix}^{T}
      \left(\mm{I}_{2\times2} \otimes \mm{H}\right)
      \begin{bmatrix}
        \mm{I} -
        {\left(\mm{\hat{C}}_{rr}^{0:}\right)}^{2} \mm{\tau}_{1}^{-1}
        &
        -\mm{\hat{C}}_{rr}^{0:}\mm{\tau}_{1}^{-1}\mm{\hat{C}}_{rs}^{0:}\\
        -\mm{\hat{C}}_{rs}^{0:}\mm{\tau}_{1}^{-1}\mm{\hat{C}}_{rr}^{0:}&
        \mm{I} -
        {\left(\mm{\hat{C}}_{rs}^{0:}\right)}^{2} \mm{\tau}_{1}^{-1}
      \end{bmatrix}
      \begin{bmatrix}
        \vv{\hat{v}}_{r}^{0:}\\
        \vv{\hat{w}}_{s}^{0:}
      \end{bmatrix}
      \\&
      =
      \sum_{j=0}^{N}
      H_{s}^{j}
      \begin{bmatrix}
        \hat{v}_{r}^{0j}\\
        \hat{w}_{s}^{0j}
      \end{bmatrix}^{T}
      \begin{bmatrix}
        1 -
        \frac{{\left(\hat{C}_{rr}^{0j}\right)}^{2}}{\tau^{j}_{1}}
        &
        -\frac{\hat{C}_{rr}^{0j}\hat{C}_{rs}^{0j}}{\tau^{j}_{1}}\\
        -\frac{\hat{C}_{rs}^{0j}\hat{C}_{rr}^{0j}}{\tau^{j}_{1}}&
        1 -
        \frac{{\left(\hat{C}_{rs}^{0j}\right)}^{2}}{\tau^{j}_{1}}
      \end{bmatrix}
      \begin{bmatrix}
        \hat{v}_{r}^{0j}\\
        \hat{w}_{s}^{0j}
      \end{bmatrix}.
    \end{split}
  \end{equation}
  In the above calculation we have used the fact that $\mm{H}$,
  $\mm{\tau}_{1}$, $\mm{C}_{rr}^{0:}$, and $\mm{C}_{rs}^{0:}$ are diagonal as
  well as made the following definitions:
  \begin{equation*}
    \begin{alignedat}{2}
      \hat{v}_{r}^{0j} &=
      v_{r}^{0j} \sqrt{\frac{1}{2}h\beta \Psi_{\min}^{0j}},& \quad
      \hat{C}_{rr}^{0j} &=
      C_{rr}^{0j}\sqrt{\frac{2}{h\beta\Psi_{\min}^{0j}} },\\
      \hat{w}_{s}^{0j} &=
      w_{s}^{0j} \sqrt{\frac{1}{2}h\alpha \Psi_{\min}^{0j}},& \quad
      \hat{C}_{rs}^{0j} &=
      C_{rs}^{0j}\sqrt{\frac{2}{h\alpha\Psi_{\min}^{0j}} }.
    \end{alignedat}
  \end{equation*}
  The eigenvalues of the matrix in~\eref{eqn:small:eig:mat} are:
  \begin{equation*}
    \mu_{1} = 1,\quad
    \mu_{2} = 1 - \frac{{\left(\hat{C}_{rr}^{0j}\right)}^{2} +
    {\left(\hat{C}_{rs}^{0j}\right)}^{2}}{\tau^{j}_{1}}.
  \end{equation*}
  The first eigenvalue $\mu_{1}$ is clearly positive and $\mu_{2}$ will be
  positive if:
  \begin{subequations}\label{eqn:tau}
  \begin{align}
    \tau_{1}^{j} >
    {\left(\hat{C}_{rr}^{0j}\right)}^2
    +
    {\left(\hat{C}_{rs}^{0j}\right)}^2
    =
    \frac{2{\left(C_{rr}^{0j}\right)}^2}{h\beta\Psi_{\min}^{0j}}
    +
    \frac{2{\left(C_{rs}^{0j}\right)}^2}{h\alpha\Psi_{\min}^{0j}}.
  \end{align}
  With such a definition of $\vv{\tau}_{1}$ all the terms
  in~\eref{eqn:small:eig:mat} are positive and thus for face $1$ the terms
  in~\eref{eqn:uMu} are positive. An identical argument holds for the other
  faces if:
  \begin{align}
    \tau_{2}^{j} &>
    \frac{2{\left(C_{rr}^{Nj}\right)}^2}{h\beta\Psi_{\min}^{Nj}}
    +
    \frac{2{\left(C_{rs}^{Nj}\right)}^2}{h\alpha\Psi_{\min}^{Nj}},\\
    \tau_{3}^{i} &>
    \frac{2{\left(C_{rs}^{i0}\right)}^2}{h\alpha\Psi_{\min}^{i0}}
    +
    \frac{2{\left(C_{ss}^{i0}\right)}^2}{h\beta\Psi_{\min}^{i0}},\\
    \tau_{4}^{i} &>
    \frac{2{\left(C_{rs}^{iN}\right)}^2}{h\alpha\Psi_{\min}^{iN}}
    +
    \frac{2{\left(C_{ss}^{iN}\right)}^2}{h\beta\Psi_{\min}^{iN}},
  \end{align}
  \end{subequations}
  and thus $\MM{M}$ is positive definite since $\VV{u}^{T}\MM{M}\VV{u} > 0$ for
  all $\VV{u} \ne \VV{0}$.
\end{proof}
\subsection{Proof of Theorem~\ref{thm:Neumann:loc:PD} (Positive Definiteness of
the Local Problem with Neumann Boundary
Conditions)}\label{sec:app:Neumann:loc:PD} Here we prove
Theorem~\ref{thm:Neumann:loc:PD} on the symmetric positive definiteness of
$\MM{M}$ with Neumann boundary conditions.
\begin{proof}
  We begin by considering
  \begin{align*}
    \VV{u}^{T}\MM{M}\VV{u}&
    =
    \VV{u}^{T}
    \left(\MM{A} + \MM{\mathcal{C}}_{1} + \MM{\mathcal{C}}_{2}
    + \MM{\mathcal{C}}_{3} + \MM{\mathcal{C}}_{4}\right) \VV{u},
  \end{align*}
  where we define the modified surface matrices $\MM{\mathcal{C}}_{k}$ to
  be
  \begin{align}
    \label{eqn:Ck:Neumann}
    \MM{\mathcal{C}}_{k}
    &=
    \MM{C}_{k} - \mm{F}_{k}
    \mm{H}^{-1}\mm{\tau}_{k}^{-1} \mm{F}_{k}^{T}
    =
    - \mm{G}_{k}^{T} \mm{H}^{-1}\mm{\tau}_{k}^{-1} \mm{G}_{k},
  \end{align}
  if face $k$ is a Neumann boundary and $\MM{\mathcal{C}}_{k} = \MM{C}_{k}$
  otherwise; see the definition of the modified
  $\MM{M}$ with Neumann boundary conditions \eref{eqn:M:Neumann} and
  \eref{eqn:Ck:Fk}. In the proof of Theorem~\ref{thm:loc:PD} it was shown that
  terms of the form of \eref{eqn:Ck:Neumann} combine with $\MM{A}$ is a way that
  is non-negative if $\mm{\tau}_{k}$ satisfy \eref{eqn:tau};
  see \eref{eqn:uAu} and following. Thus $\VV{u}^{T}\MM{M}\VV{u} \ge 0$ for all
  $\VV{u}$. The inequality will be strict for $\VV{u} \ne \VV{0}$ as long as one
  face is Dirichlet; the argument is that same as that made in the proof of
  Theorem~\ref{thm:loc:PD}.
\end{proof}
\subsection{Proof of Theorem~\ref{thm:coupled:PD} and
Corollary~\ref{cor:coupled:PD} (Positive Definiteness of the Global
Problem)}\label{sec:app:coupled:PD}

Proof of Theorem~\ref{thm:coupled:PD}
\begin{proof}
  Without loss of generality, we consider a two block mesh with Dirichlet
  boundary conditions with a single face $f \in \FF_{I}$ and
  assume that it is connected to face $k^{+}$ of block $B^{+}$ and face $k^{-}$
  of block $B^{-}$.  Solving for $\lambda_{f}$ in the global coupling equation
  \eref{eqn:global:face:couple} in terms of $\VV{u}_{B^{+}}$ and
  $\VV{u}_{B^{-}}$ gives
  \begin{equation*}
  \mm{\lambda}_{f}
  =
    \mm{D}_{f}^{-1} \left(
  \frac{1}{2}\mm{H}\left(
    \mm{\tau}_{f, B^{+}}
  -
    \mm{\tau}_{f, B^{-}}
  \right)
  \vv{\delta}_{f}
    -\mm{F}_{f,B^{+}}^{T} \VV{u}_{B^{+}}
    -\mm{F}_{f,B^{-}}^{T} \VV{u}_{B^{-}}
  \right).
  \end{equation*}
  Plugging this expression into the local problem
  \eref{eqn:full:disc}, gives
  \begin{equation}\label{eqn:glo}
    \begin{split}
    \left(\MM{A}_{B^{+}} - \mm{F}_{f,B^{+}} \mm{D}_{f}^{-1}
    \mm{F}_{f,B^{+}}^{T} + \sum_{k=1}^{4} \MM{C}_{k,B^{+}}
    \right)\VV{u}_{B^{+}}&\\
    - \mm{F}_{f,B^{+}} \mm{D}_{f}^{-1} \mm{F}_{f,B^{-}}^{T}\VV{u}_{B^{-}}
    &= \VV{q}_{B^{+} \setminus f},\\
    \left(\MM{A}_{B^{-}} - \mm{F}_{f,B^{-}} \mm{D}_{f}^{-1}
    \mm{F}_{f,B^{-}}^{T} + \sum_{k=1}^{4} \MM{C}_{k,B^{-}}
    \right)\VV{u}_{B^{-}}&\\
    - \mm{F}_{f,B^{-}} \mm{D}_{f}^{-1} \mm{F}_{f,B^{+}}^{T}\VV{u}_{B^{+}}
    &= \VV{q}_{B^{-} \setminus f}.
    \end{split}
  \end{equation}
  Here $\VV{q}_{B^{\pm} \setminus f}$ denotes $\VV{q}_{B^{\pm}}$
  (see~\eref{eqn:qtilde}) with the term dependent on $\VV{u}$ associated with
  face $f$ removed.  Using~\eref{eqn:Ck:Fk} which relates $\MM{C}_{f,B^{\pm}}$
  to $\mm{F}_{f,B^{\pm}}$ we have that
  \begin{equation*}
    \begin{split}
    \MM{C}_{f,B^{\pm}} - \mm{F}_{f,B^{\pm}} \mm{D}_{f}^{-1}
    \mm{F}_{f,B^{\pm}}^{T}& =
    \mm{F}_{f,B^{\pm}} \mm{H}^{-1} \left(
    \mm{\tau}_{f,B^{\pm}}^{-1}
    -
    {\left(
    \mm{\tau}_{f,B^{\pm}}
    +
    \mm{\tau}_{f,B^{-}}
    \right)}^{-1}
    \right)
    \mm{F}_{f,B^{\pm}}^{T}\\
    & \quad -\mm{G}_{k,B^{\pm}}^{T} \mm{H}^{-1}\mm{\tau}_{f,B^{\pm}}^{-1}\mm{G}_{k,B^{\pm}}.
    \end{split}
  \end{equation*}
  Plugging this into \eref{eqn:glo}, and rewriting the two
  equations as single system gives:
  \begin{subequations}
  \begin{align*}
    \left(
    \mm{\mathbb{A}} + \mm{\mathbb{F}}\; \mm{\mathbb{T}}\; \mm{\mathbb{F}}^{T}
    \right)
    \begin{bmatrix}
      \VV{u}_{B^{+}}\\
      \VV{u}_{B^{-}}
    \end{bmatrix}
    =
    \begin{bmatrix}
      \VV{q}_{B^{+}\setminus f}\\
      \VV{q}_{B^{-}\setminus f}
    \end{bmatrix},
  \end{align*}
  where we have defined the following matrices:
  \begin{align*}
    \mm{\mathbb{F}} &=
    \begin{bmatrix}
      \mm{H}^{1/2} \mm{F}_{f,B^{+}}
      &
      \mm{0}\\
      \mm{0}
      &
      \mm{H}^{1/2} \mm{F}_{f,B^{-}}
    \end{bmatrix},\\
    \mm{\mathbb{T}} &=
    \begin{bmatrix}
      \mm{\tau}_{f,B^{+}}^{-1}
      -
      {\left(
      \mm{\tau}_{f,B^{+}}
      +
      \mm{\tau}_{f,B^{-}}
      \right)}^{-1}&
      -{\left(
      \mm{\tau}_{f,B^{+}}
      +
      \mm{\tau}_{f,B^{-}}
      \right)}^{-1}
      \\
      -{\left(
      \mm{\tau}_{f,B^{+}}
      +
      \mm{\tau}_{f,B^{-}}
      \right)}^{-1}&
      \mm{\tau}_{f,B^{-}}^{-1}
      -
      {\left(
      \mm{\tau}_{f,B^{-}}
      +
      \mm{\tau}_{f,B^{-}}
      \right)}^{-1}&
    \end{bmatrix},
    \\
    \mm{\mathbb{A}} &=
    \begin{bmatrix}
      \mathbb{A}^{+}
      &
      \mm{0}\\
      \mm{0}
      &
      \mathbb{A}^{-}
    \end{bmatrix},\\
    \mm{\mathbb{A}^{\pm}} &=
       \MM{A}_{B^{\pm}}
       -\mm{G}_{k,B^{\pm}}^{T}
      \mm{H}^{-1}\mm{\tau}_{f,B^{\pm}}^{-1}\mm{G}_{k,B^{\pm}}
      + \sum_{\substack{k=1\\k\ne k^{\pm}}}^{4} \MM{C}_{k,B^{\pm}}.
  \end{align*}
  \end{subequations}
  The matrix $\mm{\mathbb{A}}$ is block diagonal, and each of the blocks was
  shown in the proof of Theorem~\ref{thm:loc:PD} to be symmetric positive
  semidefinite.  Thus, if $\mm{\mathbb{T}}$ is symmetric positive semidefinite,
  then the whole system is symmetric positive semidefinite.  Since
  $\mm{\tau}_{f,B^{\pm}}$ are diagonal, the eigenvalues $\mm{\mathbb{T}}$ are
  the same as the eigenvalues of the
  $2\times2$ systems
  \begin{align*}
    \mm{\mathbb{T}}^{j}
    &=
    \begin{bmatrix}
      \frac{1}{\tau^{j}_{f,B^{+} }}
      -
      \frac{1}{ \tau^{j}_{f,B^{+}} + \tau^{j}_{f,B^{-}} }&
      -\frac{1}{ \tau^{j}_{f,B^{+}} + \tau^{j}_{f,B^{-}} }
      \\
      -\frac{1}{ \tau^{j}_{f,B^{+}} + \tau^{j}_{f,B^{-}} }&
      \frac{1}{\tau^{j}_{f,B^{-} }}
      -
      \frac{1}{ \tau^{j}_{f,B^{-}} + \tau^{j}_{f,B^{-} }}
    \end{bmatrix}\notag\\
    &=
    \frac{1}{ \tau^{j}_{f,B^{+}} + \tau^{j}_{f,B^{-}} }
    \begin{bmatrix}
      \frac{\tau^{j}_{f,B^{-} }}{\tau^{j}_{f,B^{+} }} & -1 \\
      -1 & \frac{\tau^{j}_{f,B^{+} }}{\tau^{j}_{f,B^{-} }}
    \end{bmatrix},
  \end{align*}
  for each $j = 0$ to $N_{f}$ (number of points on the face). The eigenvalues of
  $\mm{\mathbb{T}}^{j}$ are
  \begin{align*}
    \mu_{1} &= 0,&
    \mu_{2} &= \frac{\tau_{f,B^{+}}^2 +
    \tau_{f,B^{-}}^2}{\tau_{f,B^{+}}\tau_{f,B^{-} }},
  \end{align*}
  which shows that $\mm{\mathbb{T}}^{j}$ and that $\mm{\mathbb{T}}$ are positive
  semidefinite as long as $\tau_{f,B^{\pm}}^{j} > 0$.

  An identical argument holds for each interface $f \in \FF$, thus the
  interface treatment guarantees the global system of equations is symmetric
  positive semidefinite. Positive definiteness results as long as one of the
  faces of the mesh is a Dirichlet boundary since only the constant state over
  the entire domain is in the $\nullspace(\MM{A}_{B})$ for all $B\in\BB$ and
  this is removed as long as some face of the mesh has a Dirichlet
  boundary condition; see proof of Theorem~\ref{thm:loc:PD}.
\end{proof}

Proof of Corollary~\ref{cor:coupled:PD}
\begin{proof}
  Begin by noting that
  \begin{align*}
    \begin{bmatrix}
      \mm{\bar{M}} & \mm{\bar{F}}\\
      \mm{\bar{F}}^{T} & \mm{\bar{D}}
    \end{bmatrix}
    =
    \begin{bmatrix}
      \mm{\bar{I}} & \mm{\bar{F}}\mm{\bar{D}}^{-1}\\
      \mm{\bar{0}} & \mm{\bar{I}}
    \end{bmatrix}
    \begin{bmatrix}
      \mm{\bar{M}} - \mm{\bar{F}} \mm{\bar{D}}^{-1}\mm{\bar{F}}^{T} & \mm{\bar{0}}\\
      \mm{\bar{0}} & \mm{\bar{D}}
    \end{bmatrix}
    \begin{bmatrix}
      \mm{\bar{I}} & \mm{\bar{0}}\\
      \mm{\bar{D}}^{-1}\mm{\bar{F}}^{T} & \mm{\bar{I}}
    \end{bmatrix}.
  \end{align*}
  By Theorem~\ref{thm:coupled:PD} and structure of $\mm{\bar{D}}$ the block
  diagonal center matrix is symmetric positive definite. Since the outer two
  matrices are the transposes of one another, it immediately follows that the
  global system matrix is symmetric positive definite.

  Since the global system matrix and $\mm{\bar{M}}$ are symmetric positive
  definite, symmetric positive definiteness of the Schur complement of the
  $\mm{\bar{M}}$ block follows directly from the decomposition
  \begin{align*}
    \begin{bmatrix}
      \mm{\bar{M}} & \mm{\bar{F}}\\
      \mm{\bar{F}}^{T} & \mm{\bar{D}}
    \end{bmatrix}
    =
    \begin{bmatrix}
      \mm{\bar{I}} & \mm{\bar{0}}\\
      \mm{\bar{F}}^{T}\mm{\bar{M}}^{-1} & \mm{\bar{I}}
    \end{bmatrix}
    \begin{bmatrix}
      \mm{\bar{M}} & \mm{\bar{0}}\\
      \mm{\bar{0}} & \mm{\bar{D}} - \mm{\bar{F}}^{T} \mm{\bar{M}}^{-1}\mm{\bar{F}}
    \end{bmatrix}
    \begin{bmatrix}
      \mm{\bar{I}} & \mm{\bar{M}}^{-1}\mm{\bar{F}}\\
      \mm{\bar{0}} & \mm{\bar{I}}
    \end{bmatrix}.
  \end{align*}
\end{proof}

\bibliographystyle{spmpscinat}
\bibliography{refs}

\begin{thebibliography}{31}
\providecommand{\natexlab}[1]{#1}
\providecommand{\url}[1]{#1}
\providecommand{\urlprefix}{URL }
\expandafter\ifx\csname urlstyle\endcsname\relax
  \providecommand{\doi}[1]{DOI~\discretionary{}{}{}#1}\else
  \providecommand{\doi}{DOI~\discretionary{}{}{}\begingroup
  \urlstyle{rm}\Url}\fi

\bibitem[{Bezanson et~al.(2017)Bezanson, Edelman, Karpinski, and
  Shah}]{bezanson2017julia}
Bezanson, J., Edelman, A., Karpinski, S., Shah, V.B.: Julia: A fresh approach
  to numerical computing.
\newblock SIAM review \textbf{59}(1), 65--98 (2017).
\newblock \doi{10.1137/141000671}

\bibitem[{Carpenter et~al.(1994)Carpenter, Gottlieb, and
  Abarbanel}]{CarpenterGottliebAbarbanel1994}
Carpenter, M.H., Gottlieb, D., Abarbanel, S.: {Time-stable} boundary conditions
  for {finite-difference} schemes solving hyperbolic systems: Methodology and
  application to {high-order} compact schemes.
\newblock Journal of Computational Physics \textbf{111}(2), 220--236 (1994).
\newblock \doi{10.1006/jcph.1994.1057}

\bibitem[{Carpenter et~al.(1999)Carpenter, Nordstr{\"o}m, and
  Gottlieb}]{CarpenterNordstromGottlieb1999}
Carpenter, M.H., Nordstr{\"o}m, J., Gottlieb, D.: A stable and conservative
  interface treatment of arbitrary spatial accuracy.
\newblock Journal of Computational Physics \textbf{148}(2), 341--365 (1999).
\newblock \doi{10.1006/jcph.1998.6114}

\bibitem[{Chen et~al.(2008)Chen, Davis, Hager, and Rajamanickam}]{cholmod}
Chen, Y., Davis, T.A., Hager, W.W., Rajamanickam, S.: Algorithm 887: Cholmod,
  supernodal sparse cholesky factorization and update/downdate.
\newblock ACM Transactions on Mathematical Software \textbf{3}(3), 22:1--22:14
  (2008).
\newblock \doi{10.1145/1391989.1391995}

\bibitem[{Ciarlet(2002)}]{Ciarlet2002FEM}
Ciarlet, P.G.: The finite element method for elliptic problems.
\newblock Society for Industrial and Applied Mathematics (2002).
\newblock \doi{10.1137/1.9780898719208}

\bibitem[{Cockburn et~al.(2009)Cockburn, Gopalakrishnan, and
  Lazarov}]{CockburnGopalakrishnanLazarov2009}
Cockburn, B., Gopalakrishnan, J., Lazarov, R.: Unified hybridization of
  discontinuous {Galerkin}, mixed, and continuous {Galerkin} methods for second
  order elliptic problems.
\newblock SIAM Journal on Numerical Analysis \textbf{47}(2), 1319--1365 (2009).
\newblock \doi{10.1137/070706616}

\bibitem[{Davis(2006)}]{davis2006direct}
Davis, T.A.: Direct methods for sparse linear systems.
\newblock Society for Industrial and Applied Mathematics (2006).
\newblock \doi{10.1137/1.9780898718881}

\bibitem[{Erickson and Day(2016)}]{EricksonDay2016}
Erickson, B.A., Day, S.M.: Bimaterial effects in an earthquake cycle model
  using rate-and-state friction.
\newblock Journal of Geophysical Research: Solid Earth \textbf{121}, 2480--2506
  (2016).
\newblock \doi{10.1002/2015JB012470}

\bibitem[{Erickson and Dunham(2014)}]{Erickson2014}
Erickson, B.A., Dunham, E.M.: An efficient numerical method for earthquake
  cycles in heterogeneous media: Alternating subbasin and surface-rupturing
  events on faults crossing a sedimentary basin.
\newblock Journal of Geophysical Research: Solid Earth \textbf{119}(4),
  3290--3316 (2014).
\newblock \doi{10.1002/2013JB010614}

\bibitem[{Gassner(2013)}]{Gassner2013}
Gassner, G.: A skew-symmetric discontinuous {Galerkin} spectral element
  discretization and its relation to {SBP-SAT} finite difference methods.
\newblock {SIAM} Journal on Scientific Computing \textbf{35}(3), A1233--A1253
  (2013).
\newblock \doi{10.1137/120890144}

\bibitem[{George(1973)}]{george1973nested}
George, A.: Nested dissection of a regular finite element mesh.
\newblock SIAM Journal on Numerical Analysis \textbf{10}(2), 345--363 (1973).
\newblock \doi{10.1137/0710032}

\bibitem[{Guyan(1965)}]{Guyan1965}
Guyan, R.J.: Reduction of stiffness and mass matrices.
\newblock AIAA journal \textbf{3}(2), 380--380 (1965).
\newblock \doi{10.2514/3.2874}

\bibitem[{Karlstrom and Dunham(2016)}]{Karlstrom2016}
Karlstrom, L., Dunham, E.M.: Excitation and resonance of acoustic-gravity waves
  in a column of stratified, bubbly magma.
\newblock Journal of Fluid Mechanics \textbf{797}, 431--470 (2016).
\newblock \doi{10.1017/jfm.2016.257}

\bibitem[{Kozdon et~al.(2012)Kozdon, Dunham, and
  Nordstr{\"o}m}]{Kozdon2012InteractionOW}
Kozdon, J.E., Dunham, .M., Nordstr{\"o}m, J.: Interaction of waves with
  frictional interfaces using summation-by-parts difference operators: Weak
  enforcement of nonlinear boundary conditions.
\newblock Journal of Scientific Computing \textbf{50}, 341--367 (2012).
\newblock \doi{10.1007/s10915-011-9485-3}

\bibitem[{Kozdon and Wilcox(2016)}]{kozdon2016stable}
Kozdon, J.E., Wilcox, L.C.: Stable coupling of nonconforming, high-order finite
  difference methods.
\newblock SIAM Journal on Scientific Computing \textbf{38}(2), A923--A952
  (2016).
\newblock \doi{10.1137/15M1022823}

\bibitem[{Kreiss and Scherer(1974)}]{KS74}
Kreiss, H., Scherer, G.: Finite element and finite difference methods for
  hyperbolic partial differential equations.
\newblock In: Mathematical aspects of finite elements in partial differential
  equations; Proceedings of the Symposium, pp. 195--212. Madison, {WI} (1974).
\newblock \doi{10.1016/b978-0-12-208350-1.50012-1}

\bibitem[{Kreiss and Scherer(1977)}]{KS77}
Kreiss, H., Scherer, G.: On the existence of energy estimates for difference
  approximations for hyperbolic systems.
\newblock Tech. rep., Department of Scientific Computing, Uppsala University
  (1977)

\bibitem[{Lotto and Dunham(2015)}]{Lotto2015}
Lotto, G.C., Dunham, E.M.: High-order finite difference modeling of tsunami
  generation in a compressible ocean from offshore earthquakes.
\newblock Computational Geosciences \textbf{19}(2), 327--340 (2015).
\newblock \doi{10.1007/s10596-015-9472-0}

\bibitem[{Mattsson(2012)}]{Mat12}
Mattsson, K.: Summation by parts operators for finite difference approximations
  of second-derivatives with variable coefficients.
\newblock Journal of Scientific Computing \textbf{51}(3), 650--682 (2012).
\newblock \doi{10.1007/s10915-011-9525-z}

\bibitem[{Mattsson and Carpenter(2010)}]{mattsson2010stable}
Mattsson, K., Carpenter, M.H.: Stable and accurate interpolation operators for
  high-order multiblock finite difference methods.
\newblock SIAM Journal on Scientific Computing \textbf{32}(4), 2298--2320
  (2010).
\newblock \doi{10.1137/090750068}

\bibitem[{Mattsson et~al.(2009)Mattsson, Ham, and Iaccarino}]{Mattsson2009}
Mattsson, K., Ham, F., Iaccarino, G.: Stable boundary treatment for the wave
  equation on second-order form.
\newblock Journal of Scientific Computing \textbf{41}(3), 366--383 (2009).
\newblock \doi{10.1007/s10915-009-9305-1}

\bibitem[{Mattsson and Nordstr\"{o}m(2004)}]{MN04}
Mattsson, K., Nordstr\"{o}m, J.: Summation by parts operators for finite
  difference approximations of second derivatives.
\newblock Journal of Computational Physics \textbf{199}(2), 503--540 (2004).
\newblock \doi{10.1016/j.jcp.2004.03.001}

\bibitem[{Mattsson and Parisi(2010)}]{MattssonParisi2010CICP}
Mattsson, K., Parisi, F.: Stable and accurate second-order formulation of the
  shifted wave equation.
\newblock Communications in Computational Physics \textbf{7}(1), 103 (2010).
\newblock \doi{10.4208/cicp.2009.08.135}

\bibitem[{Nissen et~al.(2013)Nissen, Kreiss, and Gerritsen}]{Nissen2013}
Nissen, A., Kreiss, G., Gerritsen, M.: High order stable finite difference
  methods for the {Schr\"{o}dinger} equation.
\newblock Journal of Scientific Computing \textbf{55}(1), 173--199 (2013).
\newblock \doi{10.1007/s10915-012-9628-1}

\bibitem[{Nordstr{\"{o}}m and Carpenter(2001)}]{NordstromCarpenter2001}
Nordstr{\"{o}}m, J., Carpenter, M.H.: High-order finite difference methods,
  multidimensional linear problems, and curvilinear coordinates.
\newblock Journal of Computational Physics \textbf{173}(1), 149--174 (2001).
\newblock \doi{10.1006/jcph.2001.6864}

\bibitem[{Roache(1998)}]{Roache}
Roache, P.: Verification and validation in computational science and
  engineering.
\newblock 1 edn. Hermosa Publishers, Albuquerque, NM (1998)

\bibitem[{Ruggiu et~al.(2018)Ruggiu, Weinerfelt, and
  Nordstr{\"o}m}]{ruggiu2018new}
Ruggiu, A.A., Weinerfelt, P., Nordstr{\"o}m, J.: A new multigrid formulation
  for high order finite difference methods on summation-by-parts form.
\newblock Journal of Computational Physics \textbf{359}, 216--238 (2018).
\newblock \doi{10.1016/j.jcp.2018.01.011}

\bibitem[{Strand(1994)}]{Strand94}
Strand, B.: Summation by parts for finite difference approximations for $d/dx$.
\newblock Journal of Computational Physics \textbf{110}(1), 47--67 (1994).
\newblock \doi{10.1006/jcph.1994.1005}

\bibitem[{Thom{\'e}e(2001)}]{Thomee2001}
Thom{\'e}e, V.: From finite differences to finite elements: A short history of
  numerical analysis of partial differential equations.
\newblock In: Numerical analysis: Historical developments in the 20th century,
  pp. 361--414. Elsevier (2001).
\newblock \doi{10.1016/S0377-0427(00)00507-0}

\bibitem[{Virta and Mattsson(2014)}]{VM14}
Virta, K., Mattsson, K.: Acoustic wave propagation in complicated geometries
  and heterogeneous media.
\newblock Journal of Scientific Computing \textbf{61}(1), 90--118 (2014).
\newblock \doi{10.1007/s10915-014-9817-1}

\bibitem[{Wang et~al.(2016)Wang, Virta, and Kreiss}]{wang2016high}
Wang, S., Virta, K., Kreiss, G.: High order finite difference methods for the
  wave equation with non-conforming grid interfaces.
\newblock Journal of Scientific Computing \textbf{68}(3), 1002--1028 (2016).
\newblock \doi{10.1007/s10915-016-0165-1}

\end{thebibliography}

\end{document}